\theoremstyle{plain}
\newtheorem{thm}{Theorem}[section]
\newtheorem{cor}[thm]{Corollary}
\newtheorem{prop}[thm]{Proposition}
\newtheorem{propn}[thm]{Proposition}
\newtheorem{lemma}[thm]{Lemma}
\theoremstyle{definition}
\newtheorem*{defn}{Definition}
\newtheorem*{rmk}{Remark}
\newtheorem{ex}{Example}
\DeclareMathOperator{\PSL}{PSL}
\DeclareMathOperator{\SL}{SL}
\DeclareMathOperator{\lcm}{lcm}
\DeclareMathOperator{\D}{D}
\newcommand{\eps}{\varepsilon}
\newcommand{\vp}{\varphi}
\newcommand{\De}{\Delta}
\newcommand{\Ga}{\Gamma}
\newcommand{\La}{\Lambda}
\newcommand{\ol}{\overline}
\newcommand{\Hy}{\mathbf{H}}
\newcommand{\Q}{\mathbf{Q}}
\newcommand{\R}{\mathbf{R}}
\newcommand{\Z}{\mathbf{Z}}
\newcommand{\F}{\mathbf{F}}
\newcommand{\ds}{\displaystyle}
\newcommand{\mr}{\mathrm}
\newcommand{\mb}{\mathbf}
\newcommand{\mc}{\mathcal}
\newcommand{\on}{\operatorname}
\newcommand{\med}{\mr{mid}}
\newcommand{\Ab}{\mr{Ab}}
\newcommand{\m}{\mb{m}}
\newcommand{\n}{\mb{n}}
\newcommand{\x}{\mb{x}}
\newcommand{\cvec}{\mb{c}}
\newcommand{\dvec}{\mb{d}}
\newcommand{\bet}{b_1}
\newcommand{\fq}{\mr{FQ}}
\newcommand{\Iplus}{\ensuremath{\on{Isom}^+}}
\newcommand{\col}{\ensuremath{\colon}}
\newcommand{\thisthmname}{}
\newtheorem{genericthm}[thm]{\thisthmname}
\newtheorem*{named}{\theoremname}
\newcommand{\theoremname}{}
\newenvironment{namedthm*}[1]{
    \renewcommand{\theoremname}{#1}
    \begin{named}}
    {\end{named}}
\newcounter{caseNum}
\newenvironment{caseof}{\setcounter{caseNum}{1}}{\vskip.5\baselineskip}
\newcommand{\case}[2]{\vskip.5\baselineskip\par\noindent {\bfseries Case \Roman{caseNum}:} #1\\#2\addtocounter{caseNum}{1}}
\begin{document}
\bibliographystyle{plain}


\title{\textbf{Finite quotients of Fuchsian groups}}
\author{Frankie Chan and Lindsey Styron}
\maketitle

\begin{abstract}
    \noindent This work provides an effective algorithm for distinguishing finite quotients between two non-isomorphic finitely generated Fuchsian groups $\Ga$ and $\La$. It will suffice to take a finite quotient which is abelian, dihedral, a subgroup of $\PSL(2,\F_q)$, or an abelian extension of one of these 3. We will develop an approach for creating group extensions upon a shared finite quotient of $\Ga$ and $\La$ which between them have differing degrees of smoothness. Regarding the order of a finite quotient that distinguishes between $\Ga$ and $\La$, we establish an upperbound as a function of the genera, the number of punctures, and the cone orders arising in $\Ga$ and $\La$.
\end{abstract}

\section{Introduction}

Bridson--Conder--Reid \cite{BCR} proved that finitely generated Fuchsian groups can be distinguished from lattices of connected Lie groups via their finite quotients. Moreover, they provided an explicit description for distinguishing finite quotients between two non-isomorphic (cocompact) triangle groups. We are motivated in extending this result; we prove that finite generated Fuchsian groups can be distinguished from each other by their finite quotients with an effective algorithm.

\begin{namedthm*}{Theorem \ref{thm:effective_bounds}}
    Let $\Ga=(g_1;p_1;\m)$ and $\La=(g_2;p_2;\n)$ be non-isomorphic Fuchsian groups, with $k:=|\m|=|\n|\geq0$ and $1,\infty\not\in\m\cup\n$. Set $M:=\lcm(\m)$, $N:=\lcm(\n)$, $L:=\lcm(M,N)$, and $b:=\max\{\bet(\Ga),\bet(\La)\}$. Then there exists a finite group $Q$ having order $$|Q|\ll (L+1)^{15+L^{15(b+k)}}$$ such that $Q$ is a quotient for one of the groups, but not for the other group.
\end{namedthm*}

\subsection{Profinite invariants}

For some background on the theory of profinite groups, see \cite{profiniteBook}.

Given a group $\Gamma$, define the set of \textit{finite quotients} of $\Gamma$ by $\fq(\Gamma):=\left\{[Q]\colon Q \text{ is a finite quotient of }\Gamma\right\}$, where $[Q]$ denotes the isomorphism class of the finite group $Q$.

\begin{defn}
    Let $\mc{F}$ be the set of finite index normal subgroups of $\Gamma$. Given $M,N\in\mc{F}$ such that $M\leq N$, let $\pi_M^N\col\Gamma/M\twoheadrightarrow\Gamma/N$ be the natural projection map. The \textit{profinite completion} $\widehat{\Gamma}$ of $\Gamma$ is the subgroup given by $$\displaystyle{\widehat{\Gamma}:=\left\{\left(x_M\right)_{M\in\mc{F}}\,\middle|\,x_M\in\Gamma/M\text{ and for every }M\leq N,\ \pi_M^N(x_M)=x_N\right\}\leq\prod_{M\in\mc{F}}\Gamma/M}.$$
\end{defn}

The profinite completion $\widehat{\Gamma}$ is a categorical inverse limit of finite groups under the discrete topology. We say $\Gamma$ and $\Lambda$ are \textit{profinitely equivalent} if $\widehat{\Gamma}\cong\widehat{\Lambda}$. A deep result of Nikolov--Segal \cite{nikolov-segal} makes precise an intimate connection between $\widehat{\Gamma}$ and $\fq(\Gamma)$ for finitely generated groups.

\begin{thm}
    Let $\Gamma$ and $\Lambda$ be finitely generated groups, then $\widehat{\Gamma}\cong\widehat{\Lambda}$ if and only if $\fq(\Gamma)=\fq(\Lambda)$.
\end{thm}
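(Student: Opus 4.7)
The plan is to prove both directions of the biconditional, leveraging the Nikolov--Segal theorem throughout. Nikolov--Segal asserts that every finite-index subgroup of a topologically finitely generated profinite group is open; equivalently, the abstract group structure of such a profinite group determines its topology. This is the key ingredient that legitimizes passing freely between abstract and continuous homomorphisms on $\widehat{\Gamma}$ and $\widehat{\Lambda}$ when $\Gamma$ and $\Lambda$ are finitely generated.

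For the forward direction, assume $\widehat{\Gamma}\cong\widehat{\Lambda}$. The universal property of the profinite completion gives a natural bijection between finite quotients of $\Gamma$ and continuous finite quotients of $\widehat{\Gamma}$, up to isomorphism. Since $\widehat{\Gamma}$ is topologically finitely generated, Nikolov--Segal implies that every abstract finite quotient of $\widehat{\Gamma}$ is automatically continuous, so $\fq(\Gamma)$ coincides with the set of abstract iso-classes of finite quotients of $\widehat{\Gamma}$. This set is manifestly an invariant of the abstract isomorphism class of $\widehat{\Gamma}$; combined with the analogous identification for $\Lambda$, this gives $\fq(\Gamma)=\fq(\Lambda)$.

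For the reverse direction, assume $\fq(\Gamma)=\fq(\Lambda)$. After replacing $\Gamma$ and $\Lambda$ by their residually finite quotients (modding out by the intersection of all finite-index normal subgroups, which leaves both $\fq$ and $\widehat{\cdot}$ unchanged), I would try to realize $\widehat{\Gamma}\cong\widehat{\Lambda}$ as a limit of compatible isomorphisms between matched finite quotients of the two sides. Since $\Gamma$ is finitely generated, there are only finitely many normal subgroups of each bounded index, so the inverse system can be indexed by a countable chain. A K\"onig's lemma / Tychonoff compactness argument on the poset of coherent partial matchings between the two inverse systems then extracts the desired limit isomorphism.

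The main obstacle I anticipate is that $\fq$ records only iso-classes, not multiplicities: a priori, an iso-class $[Q]$ might arise as a quotient of $\Gamma$ through several distinct normal subgroups but only one of $\Lambda$, or vice versa, which would doom the compactness step. The resolution is that the multiplicity data is implicitly encoded in $\fq$ through sub-direct products; if $N_1,\dots,N_k\triangleleft\Gamma$ each satisfy $\Gamma/N_i\cong Q$, then $\Gamma/(N_1\cap\cdots\cap N_k)$ embeds as a subdirect product of $Q^k$ whose iso-class lies in $\fq(\Gamma)$ and pins down how many independent copies of $Q$ arise as quotients. Iterating this observation to reconstruct the full multi-set of finite quotients, and then running the compactness extraction, should produce the desired isomorphism $\widehat{\Gamma}\cong\widehat{\Lambda}$.
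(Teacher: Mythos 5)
The paper does not actually prove this statement; it is quoted as a known result and attributed to Nikolov--Segal (historically, the equivalence with $\widehat\Gamma\cong\widehat\Lambda$ understood as a topological isomorphism is due to Dixon--Formanek--Poland--Ribes, and Nikolov--Segal upgrades this to abstract isomorphisms by showing finite-index subgroups of topologically finitely generated profinite groups are open). So there is no in-paper proof to compare against, and I evaluate your argument on its own merits.

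Your forward direction is correct: the universal property identifies $\fq(\Gamma)$ with isomorphism classes of continuous finite quotients of $\widehat\Gamma$, and since $\widehat\Gamma$ is topologically finitely generated, Nikolov--Segal promotes ``continuous'' to ``abstract,'' making $\fq(\Gamma)$ an invariant of the abstract isomorphism type of $\widehat\Gamma$.

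The reverse direction has a genuine gap. You correctly spot both that a compactness argument is needed and that $\fq$ records only isomorphism classes, but your proposed repair via subdirect products does not close the gap as written: distinct subdirect products inside $Q^k$ need not be abstractly isomorphic, and the membership $[\Gamma/(N_1\cap\cdots\cap N_k)]\in\fq(\Gamma)$ does not by itself let you reconstruct the multiplicity of $[Q]$ among quotients of $\Gamma$. The standard argument sidesteps the multiplicity question entirely. Set $\Gamma_n:=\bigcap\{N\triangleleft\Gamma:[\Gamma:N]\leq n\}$, which has finite index because $\Gamma$ is finitely generated, so $\widehat\Gamma=\varprojlim_n\Gamma/\Gamma_n$. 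The key lemma is that $\fq(\Gamma)=\fq(\Lambda)$ forces $\Gamma/\Gamma_n\cong\Lambda/\Lambda_n$ for every $n$: since $\Gamma/\Gamma_n\in\fq(\Lambda)$ we may write $\Gamma/\Gamma_n\cong\Lambda/K$; the normal subgroups of index at most $n$ in $\Gamma/\Gamma_n$ intersect trivially, so the same holds in $\Lambda/K$, whence $\Lambda_n\leq K$ and $\Gamma/\Gamma_n$ is a quotient of $\Lambda/\Lambda_n$; by symmetry and finiteness they are isomorphic. Since $\Gamma_m/\Gamma_n$ is a characteristic subgroup of $\Gamma/\Gamma_n$ (it is the intersection of all normal subgroups of index $\leq m$), every isomorphism $\Gamma/\Gamma_n\to\Lambda/\Lambda_n$ is automatically compatible with the projections to level $m\leq n$. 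The nonempty finite sets of such isomorphisms then form an inverse system of nonempty finite sets, and a K\"onig/compactness argument of exactly the type you propose extracts a coherent family, giving $\widehat\Gamma\cong\widehat\Lambda$. In short: your compactness machinery is right, but it should be run on the canonical tower $\Gamma/\Gamma_n$, not on ad hoc matchings of individual iso-classes with multiplicity.
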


If $\Ga_K$ is the intersection of all of the finite index normal subgroups of $\Gamma$, then $\fq(\Gamma)=\fq(\Gamma/K)$. To ensure we are not working with essentially redundant profinitely equivalent groups, we restrict $\Gamma$ to be residually finite.

\begin{defn}
    A group $\Gamma$ is \textit{residually finite}, if the intersection of all of the finite index normal subgroups of $\Gamma$ is the trivial subgroup.
\end{defn}

In particular, the group $\Gamma/\Ga_K$ is residually finite. Fuchsian groups, or more generally lattices of constant curvature symmetric 2-spaces are residually finite \cite{malcev}. We now define two different scopes of profinite rigidity. 

\begin{defn}
    Let $\mc{C}$ denote some subclass of the class of all residually finite groups.
    \begin{enumerate}
        \item A group $\Gamma\in\mc{C}$ is \textit{relatively profinitely rigid} in $\mc{C}$, if whenever $\widehat{\Gamma}\cong\widehat{\Lambda}$ for some $\Lambda\in\mc{C}$, then $\Gamma\cong\Lambda$.
    
        \item A residually finite group $\Gamma$ is \textit{(absolutely) profinitely rigid}, if $\Gamma$ is relatively profinitely rigid in the class of all residually finite groups.
    \end{enumerate}
\end{defn}

Thus, Theorem \ref{thm:effective_bounds} is establishing a relative profinite rigidity among the class of all finitely generated Fuchsian groups.

\subsection{Fuchsian groups}

For a more detailed introduction to Fuchsian groups, see \cite{katok}.

A \emph{Fuchsian group} $\Gamma$ is a discrete subgroup of $\PSL(2,\R)\cong\Iplus(\Hy^2)$, the orientation-preserving isometries of the hyperbolic plane $\Hy^2$. We will primarily work with the abstract presentation of a finite generated Fuchsian group $\Ga$, and unless otherwise stated, we will reserve the term \textit{Fuchsian group} to mean a non-elementary finitely generated Fuchsian group. The \textit{signature} and \textit{standard presentation} of a Fuchsian group $\Ga$ is associated to the 2-orbifold $\mc{O}_\Ga:=\Hy^2/\Ga$ and can be categorized in the following way. Given a possibly empty multiset $\m=(m_1,\dots,m_k)$ of \textit{cone orders} $1\leq m_i<\infty$, with \textit{genus} $g\geq0$, and number of \textit{punctures} $p\geq0$, we write \begin{align*}
    \Ga=(g;p;\m)=\langle\alpha_1,\beta_1,\dots,\alpha_g,\beta_g;y_1,\dots,y_p;x_1,\dots,x_k\mid[\alpha_1,\beta_1]\dots[\alpha_g,\beta_g]y_1\dots y_px_1\dots x_k=x_i^{m_i}=1\rangle,
\end{align*} provided that the orbifold \textit{Euler characteristic} $\chi(g;p;\m)$ satisfies $$\chi(g;p;\m):=2-2g-p-\sum_{i=1}^k\left(1-\frac{1}{m_i}\right)<1.$$ We call the elements $\alpha_r,\beta_s,y_t,x_u$ the \textit{standard generators} of the standard presentation of $(g;p;\m)$; in particular, $\alpha_r,\beta_s$ are \textit{hyperbolic}, $y_t$ are \textit{parabolic}, and $x_u$ are \textit{elliptic} generators. Instances of the conepoint $1\in\m$ may be removed from $\m$, and if we allow for $\m$ to contain $\infty$, then each instance of $\infty$ may removed form $\m$ and added to the number of punctures $p$. These alterations change neither the isomorphism type nor the Euler characteristic of $(g;p;\m)$.

When $(g;p;\m)$ has $p>0$ punctures, it is a virtually free group, and $(g;p;\m)\cong F_{2g+p-1}*\Z_{m_1}*\dots*\Z_{m_k}$ is a free product of the free group $F_r$ of rank $r=2g+p-1$ with cyclic groups $\Z_{m_i}$ of order $m_i$. Furthermore, since $(g;p;\m)\cong(g-1;p+2;\m)$ when $g,p>0$, we can always find a genus zero representative $(0;p';\m)$ isomorphic to $(g;p;\m)$. In this case, the Euler characteristic $\chi(g-1;p+2;\m)=\chi(g;p;\m)$ is also preserved, even though the associated orbifolds are not isometric. Outside of these isomorphisms mentioned, different hyperbolic signatures produce non-isomorphic Fuchsian groups. In the unpunctured case $p=0$, we have that $(g;0;\m)$ is virtually a surface group $\Sigma_{g'}=(g';0;-)$, which follows from Selberg \cite{selberg}. An important class of Fuchsian groups arises for $g=p=0$. The $(r,s,t)$\textit{-triangle group} is $\Delta(r,s,t):=(0;0;r,s,t)$. Given $\m$ of any length, we set $\De(\m):=(0;0;\m)$ and $\chi(\m):=\chi(0;0;\m)$, even if the group is not Fuchsian.

The definition of the Euler characteristic can apply to more than just Fuchsian groups. For convenience, we will allow for $\chi$ to be defined for signatures where $\chi$ is no longer an isomorphism invariant, e.g., for bad orbifolds. If $\m$ contains neither 1 nor $\infty$, we say that $\m$ or $\De(\m)$ is \textit{bad} if $|\m|=1$, or if $|\m|=2$ but $m_1\neq m_2$, otherwise, we say that $\m$ or $\De(\m)$ is \textit{good}. Good and bad orbifolds were introduced by Thurston \cite{thurston}.

Suppose $\chi(g;p;\m)<0$ and $1,\infty\not\in\m$. The following lists the unique isomorphism classes of Fuchsian groups. To ease the listing of fewer cases, even though they are non-hyperbolic, we include the groups $(0;2;-)\cong\Z$ and $(1;0;-)\cong\Z^2$.
\begin{enumerate}
    \item $(0;p;\m)$, where $p\geq2$.
    \item $(0;1;\m)$, where $|\m|\geq2$.
    \item $(g;0;\m)$, where $g\geq1$.
    \item $(0;0;\m)$, where $|\m|\geq3$.
\end{enumerate} We call Fuchsian groups of the forms 1 and 2 \textit{punctured} or \textit{non-compact}, and those of the forms 3 and 4 \textit{unpunctured} or \textit{compact}. We will also refer to the notion of punctured and unpunctured Fuchsian groups by their \textit{puncture type}.

\subsection{Finite index subgroups of Fuchsian groups}\label{FIsubgroups}

Every finite index subgroup $\La=(g';p';\n)$ of a Fuchsian group $\Ga=(g;p;\m)$ is also Fuchsian group, and the Riemann--Hurwitz formula establishes much of the structure arising in $\La$.

\begin{thm}[Riemann-Hurwitz formula]
    Let $\Ga$ be a Fuchsian group, and $\La$ a finite index subgroup of $\Ga$. Then $\La$ is also a Fuchsian group and $\chi(\La)=[\Ga:\La]\chi(\Ga)$. 
\end{thm}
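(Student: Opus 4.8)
The plan is to deduce the statement from two classical ingredients: the presentation theory of finitely generated Fuchsian groups, and the multiplicativity of the rational (Wall) Euler characteristic under passage to finite-index subgroups. Since $\La$ is a subgroup of the discrete group $\Ga\leq\PSL(2,\R)$ it is itself discrete, and since it has finite index in the finitely generated group $\Ga$ it is finitely generated (Reidemeister--Schreier). Hence, by the structure theory of finitely generated Fuchsian groups \cite{katok}, $\La$ admits a standard presentation with signature $(g';p';\n)$ of one of the types listed in Section~\ref{FIsubgroups}; in particular $\La$ is again a Fuchsian group in our sense. Writing $n:=[\Ga:\La]$, the associated orbifold covering $\mc{O}_\La\to\mc{O}_\Ga$ has degree $n$, so $\mc{O}_\La$ is compact if and only if $\mc{O}_\Ga$ is; equivalently, $\La$ is punctured precisely when $\Ga$ is, since a finite-index subgroup of a virtually free group is virtually free and a finite-index subgroup of a virtually surface group is virtually a surface group.

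Next I would set up the rational Euler characteristic. By Selberg's lemma \cite{selberg} every finitely generated Fuchsian group is virtually torsion-free, and a torsion-free finitely generated Fuchsian group is either a finitely generated free group (in the punctured case) or a closed orientable surface group (in the unpunctured case); in either case it has a finite classifying space. Consequently the rational Euler characteristic $\chi_{\mr{Wall}}$ is well-defined on every finitely generated Fuchsian group $G$, via $\chi_{\mr{Wall}}(G):=\chi(G_0)/[G:G_0]$ for any torsion-free finite-index subgroup $G_0\leq G$ (this being independent of the choice of $G_0$), and it is a standard fact that $\chi_{\mr{Wall}}(H)=[G:H]\,\chi_{\mr{Wall}}(G)$ for every finite-index subgroup $H\leq G$.

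The heart of the argument is to check that $\chi_{\mr{Wall}}$ coincides with the signature-theoretic Euler characteristic, i.e. $\chi_{\mr{Wall}}((g;p;\m))=\chi(g;p;\m)$ for every signature. In the punctured case $p>0$ one has $(g;p;\m)\cong F_{2g+p-1}*\Z_{m_1}*\dots*\Z_{m_k}$; since $\chi_{\mr{Wall}}(F_r)=1-r$, $\chi_{\mr{Wall}}(\Z_m)=1/m$, and $\chi_{\mr{Wall}}(A*B)=\chi_{\mr{Wall}}(A)+\chi_{\mr{Wall}}(B)-1$, one computes $\chi_{\mr{Wall}}=(1-(2g+p-1))+\sum_{i=1}^k\tfrac{1}{m_i}-k=2-2g-p-\sum_{i=1}^k\left(1-\tfrac{1}{m_i}\right)=\chi(g;p;\m)$. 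In the unpunctured case $(g;0;\m)$ acts properly discontinuously on the contractible space $\Hy^2$ with finite point-stabilizers and compact quotient orbifold $\mc{O}$ with underlying surface $\Sigma_g$; choosing a $\Ga$-equivariant cell structure for which the $k$ cone points are $0$-cells, the stabilizer of the $i$-th cone-point cell is cyclic of order $m_i$ and all other cell stabilizers are trivial, so the standard formula $\chi_{\mr{Wall}}(\Ga)=\sum_{\sigma}(-1)^{\dim\sigma}/|\mathrm{stab}(\sigma)|$ (sum over orbit representatives of cells) yields $\chi_{\mr{Wall}}(\Ga)=\chi(\Sigma_g)-\sum_{i=1}^k\left(1-\tfrac{1}{m_i}\right)=\chi(g;0;\m)$. (When $\chi(g;0;\m)<0$ this can instead be seen by realizing $\Ga$ as a cocompact Fuchsian group and applying Gauss--Bonnet, $\mathrm{area}(\mc{O})=-2\pi\chi(g;0;\m)$, together with multiplicativity of hyperbolic area under covers.) Combining these, and using that $\La$ has the same puncture type as $\Ga$ with signature $(g';p';\n)$, we obtain $\chi(\La)=\chi(g';p';\n)=\chi_{\mr{Wall}}(\La)=n\,\chi_{\mr{Wall}}(\Ga)=n\,\chi(\Ga)$, as claimed.

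I expect the main obstacle to be organizational rather than substantive: each ingredient — the presentation theory of finitely generated Fuchsian groups, Selberg's lemma, the existence and multiplicativity of $\chi_{\mr{Wall}}$, and the cell-counting formula for groups acting on contractible complexes with finite stabilizers — is classical, and the work is in assembling them while keeping the conventions of Section~\ref{FIsubgroups} consistent. In particular one must confirm that the identification of the signature of $\La$ is legitimate and that the degenerate entries $(0;2;-)\cong\Z$ and $(1;0;-)\cong\Z^2$ folded into our list cause no trouble, which they do not since $\chi_{\mr{Wall}}(\Z)=\chi_{\mr{Wall}}(\Z^2)=0=\chi(0;2;-)=\chi(1;0;-)$.
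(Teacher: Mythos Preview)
Your argument is correct and complete. Note, however, that the paper does not actually prove this theorem: it is stated as a classical result (with \cite{katok} cited for background on Fuchsian groups) and then used as a black box throughout Section~\ref{FIsubgroups}. So there is no ``paper's own proof'' to compare against.

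That said, your approach via Wall's rational Euler characteristic is a clean and standard way to establish the formula. The two computations --- free products in the punctured case and the equivariant cell-counting formula in the unpunctured case --- are exactly the right ones, and the multiplicativity of $\chi_{\mr{Wall}}$ under finite index does all the work. An alternative (and the one most texts, including Katok, take) is the direct orbifold-covering argument: track how cone points and punctures lift under the degree-$n$ cover $\mc{O}_\La\to\mc{O}_\Ga$ and combine with the ordinary Riemann--Hurwitz formula for the underlying branched cover of surfaces. Your route has the advantage of being purely group-theoretic and of making the role of virtual torsion-freeness explicit; the covering-space route is more geometric and yields the explicit description of $\n$ and $p'$ that the paper records immediately after the theorem statement.
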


The Riemann--Hurwitz formula helps determine the signature of the kernel $K=\ker\pi=(g';p';\n)$ of a surjective map $\pi\colon(g;p;\m)\twoheadrightarrow G$ onto a finite group $G$, where $1,\infty\not\in\m$. Take the standard parabolic and elliptic generators $y_1,\dots,y_p;x_1,\dots,x_k\in(g;p;\m)$ mapping to the elements $\pi(y_1),\dots,\pi(y_p);\pi(x_1),\dots,\pi(x_k)\in G$, and record their respective orders in $G$ with the multiset $\n:=(d_1,\dots,d_p;c_1,\dots,c_k)$.

By considering the elliptic cycles (see \cite{katok}), the kernel $K=(g';p';\n)$ admits the following multiset of (possibly infinite) cone orders $$\left(\infty^{(|G|/d_1)},\dots,\infty^{(|G|/d_p)};\left(\frac{m_1}{c_1}\right)^{(|G|/c_1)},\dots,\left(\frac{m_k}{c_k}\right)^{(|G|/c_k)}\right),$$ where $a^{(i)}$ indicates the conepoint $a$ repeated $i$ times. If follows that $$p':=\frac{|G|}{d_1}+\dots+\frac{|G|}{d_p} \text{  and  } \n=\left(\left(\frac{m_1}{c_1}\right)^{(|G|/c_1)},\dots,\left(\frac{m_k}{c_k}\right)^{(|G|/c_k)}\right).$$

To obtain the value of the genus $g'$ of $K$, apply the Riemann--Hurwitz formula \begin{align*}
    \chi(g';p';\n) &= |G|\chi(g;p;\m)\\
    2-2g'-p'-\sum_{i=1}^k\left(1-\frac{c_i}{m_i}\right)\frac{|G|}{c_i} &= |G|\left(2-2g-p-\sum_{i=1}^k\left(1-\frac{1}{m_i}\right)\right).
\end{align*} This simplifies to the following equation which we can solve for the value of $g'$: \begin{align}
    \chi(g';p';-)=|G|\chi(g;p;\cvec).\label{eqn:finding_kernel_bettiNum}
\end{align}

Observe that every finite index subgroup $(g';p';\n)$ of an unpunctured group $(g;0;\m)$ remains unpunctured, i.e., $p'=0$. Similarly, every finite index subgroup $(g';p';\n)$ of a punctured lattice $(g;p;\m)$ is also punctured, i.e., $p'>0$ whenever $p>0$.

\section{Abelian quotients}

We describe the \textit{abelianization} of a Fuchsian group $(g;p;\m)$ using the \textit{invariant factor form} for finitely generated abelian groups. To proceed, we generalize the notions of $\gcd$'s and $\lcm$'s using a family of operators $\med_i(\m)$. Given a multiset $\m:=(m_1,\dots,m_k)$ not containing $\infty$, we define $\med_i(\m)$, for $1\leq i\leq k$.

\begin{defn} Let $\ell>0$ be prime and $d$ be a nonzero integer.
    \begin{enumerate}
        \item We write $\ell^a\mid\mid d$ for some integer $a\geq0$, if $\ell^a\mid d$ and $\ell^{a+1}\nmid d$. In this case, the \textit{$\ell$-adic valuation} of $d$ is $a$, and we denote this by $v_{\ell}(d)=a$. We define $v_{\ell}(0)=\infty$.

        \item For a nonzero rational number $q=x/y$, where $x,y\in\Z$, define $v_{\ell}(q)=v_{\ell}(x/y)=v_{\ell}(x)-v_{\ell}(y)\in\Z$, which is well-defined.
    \end{enumerate}
\end{defn}

For each prime $\ell$ dividing $\lcm(\m)$, suppose $\alpha_{\ell,1}\leq \alpha_{\ell,2}\leq\dots\leq\alpha_{\ell,k}$ are the valuations $v_{\ell}(m_1),v_{\ell}(m_2),\dots,v_{\ell}(m_k)$ written in increasing order.

\begin{defn}
    For each $1\leq i\leq k$, we define $\ds \med_i(\m):=\prod_{\ell\text{ prime}}\ell^{\alpha_{\ell,i}}$, the product of each of the $i$-th lowest prime powers appearing among the prime factorizations of each of the integers $m_1,\dots,m_k$.
\end{defn} In particular, we have $\med_1(\m)=\gcd(\m)$, $\med_k(\m) =\lcm(\m)$, $\ds \prod_{i=1}^k m_i=\prod_{i=1}^{k} \med_i(\m)$, and $\med_i(\m)\mid\med_{j}(\m)$ for $i<j$. We can now express the abelianization of $(g;p;\m)$ in the following piece-wise way.

\begin{propn}\label{prop:abelianization}
    The abelianization of a Fuchsian group $(g;p;\m)$ has the following invariant factor form (possibly including trivial $\Z_1$ factors) $$(g;p;\m)^{\Ab}\cong
    \begin{cases}
        \Z^{2g+p-1}\times\Z_{\med_1(\m)}\times\dots\times\Z_{\med_k(\m)} & \text{if } p>0\\
        \Z^{2g}\times\Z_{\med_1(\m)}\times\dots\times\Z_{\med_{k-1}(\m)} & \text{if } p=0.
    \end{cases}$$
\end{propn}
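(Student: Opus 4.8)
The plan is to read off the abelianization directly from the standard presentation --- for $p>0$ even more quickly from the free-product decomposition recorded above --- thereby reducing the statement to two purely arithmetic facts about the operators $\med_i(\m)$, which I would then verify by passing to the primary (prime-power) decomposition of the torsion part.

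For $p>0$ the quickest route uses $(g;p;\m)\cong F_{2g+p-1}*\Z_{m_1}*\dots*\Z_{m_k}$: abelianization turns a free product into the direct product of the abelianizations of the factors, so $(g;p;\m)^{\Ab}\cong\Z^{2g+p-1}\times\Z_{m_1}\times\dots\times\Z_{m_k}$. For $p=0$ I would abelianize the standard presentation by hand. Every commutator $[\alpha_r,\beta_s]$ dies, so the hyperbolic generators contribute a free summand $\Z^{2g}$, and one is left with the abelian group $B$ on generators $x_1,\dots,x_k$ subject to $m_ix_i=0$ for $1\le i\le k$ and the single long relation $x_1+\dots+x_k=0$. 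Hence $B\cong\left(\Z_{m_1}\times\dots\times\Z_{m_k}\right)\big/\langle(1,1,\dots,1)\rangle$, where $(1,\dots,1)$ is the image of $x_1+\dots+x_k$, and $(g;0;\m)^{\Ab}\cong\Z^{2g}\times B$.

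It remains to establish two arithmetic facts. First, $\Z_{m_1}\times\dots\times\Z_{m_k}\cong\Z_{\med_1(\m)}\times\dots\times\Z_{\med_k(\m)}$, and the right-hand side is already in invariant factor form because $\med_1(\m)\mid\med_2(\m)\mid\dots\mid\med_k(\m)$. This is cleanest via the primary decomposition: for each prime $\ell$ the $\ell$-primary part of the left-hand side is $\Z_{\ell^{\alpha_{\ell,1}}}\times\dots\times\Z_{\ell^{\alpha_{\ell,k}}}$ with $\alpha_{\ell,1}\le\dots\le\alpha_{\ell,k}$, and regrouping these prime powers via CRT by ``column'' $i$ rather than by prime $\ell$ produces $\prod_i\Z_{\med_i(\m)}$; the divisibilities $\med_i\mid\med_{i+1}$ are immediate from $\alpha_{\ell,i}\le\alpha_{\ell,i+1}$. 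Second, $B\cong\Z_{\med_1(\m)}\times\dots\times\Z_{\med_{k-1}(\m)}$. Again I would localize at each prime $\ell$: the image of $(1,\dots,1)$ in $\Z_{\ell^{\alpha_{\ell,1}}}\times\dots\times\Z_{\ell^{\alpha_{\ell,k}}}$ is still $(1,\dots,1)$, of order $\ell^{\alpha_{\ell,k}}$, and the homomorphism $(a_1,\dots,a_k)\mapsto(a_1-a_k,\dots,a_{k-1}-a_k)$ onto $\Z_{\ell^{\alpha_{\ell,1}}}\times\dots\times\Z_{\ell^{\alpha_{\ell,k-1}}}$ is surjective with kernel exactly $\langle(1,\dots,1)\rangle$ --- here one uses $\alpha_{\ell,j}\le\alpha_{\ell,k}$ so that $a_k$ makes sense modulo $\ell^{\alpha_{\ell,j}}$. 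Reassembling over all primes and regrouping by column gives $B\cong\prod_{i=1}^{k-1}\Z_{\med_i(\m)}$, with trivial $\Z_1$ factors arising precisely when some $\med_i(\m)=1$.

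The only genuinely delicate step is the second arithmetic fact, i.e.\ identifying the quotient of $\Z_{m_1}\times\dots\times\Z_{m_k}$ by the element $x_1+\dots+x_k$ in the $p=0$ case. It is tempting to compute inside the invariant factor decomposition $\Z_{\med_1}\times\dots\times\Z_{\med_k}$, but that presentation obscures where the element $x_1+\dots+x_k$ lands; working prime-by-prime --- equivalently, computing the Smith normal form of the $(k+1)\times k$ relation matrix $\operatorname{diag}(m_1,\dots,m_k)$ with an appended all-ones row --- keeps that element visible and makes the cancellation of the top cone order $\med_k(\m)$ transparent.
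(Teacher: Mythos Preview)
The paper states this proposition without proof. Your argument is correct and complete: the free-product decomposition handles $p>0$ immediately, and for $p=0$ your prime-by-prime computation of the quotient of $\Z_{m_1}\times\dots\times\Z_{m_k}$ by $(1,\dots,1)$ --- equivalently, the Smith normal form calculation you allude to at the end --- is exactly what is needed to see that the top invariant factor $\med_k(\m)$ is killed while the remaining $\med_1(\m),\dots,\med_{k-1}(\m)$ survive unchanged. The only point worth making explicit is that the reordering of the $\ell$-primary factors by valuation (so that $\alpha_{\ell,k}$ is largest) may permute the coordinates differently for different primes $\ell$, but this is harmless since the diagonal element $(1,\dots,1)$ is symmetric and since you reassemble by column afterwards.
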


We can therefore deduce the following for Fuchsian groups with the same or mixed puncture types.

\begin{cor}\label{cor:abelianization}
    Suppose $\m=(m_1,\dots,m_k)$ and $\n=(n_1,\dots,n_k)$ are multisets, neither of which contains $\infty$. \begin{enumerate}
        \item If $(0;p_1;\m)^{\Ab}\cong(0;p_2;\n)^{\Ab}$ and $p_1,p_2>0$, then $p_1=p_2$ and $\med_i(\m)=\med_{i}(\n)$, for $1\leq i\leq k$. Furthermore, $m_1\dots m_k=n_1\dots n_k$.

        \item If $(g_1;0;\m)^{\Ab}\cong(g_2;0;\n)^{\Ab}$, then $g_1=g_2$ and $\med_i(\m)=\med_{i}(\n)$, for $1\leq i\leq k-1$.

        \item If $(g;0;\m)^{\Ab}\cong(0;p;\n)^{\Ab}$ with $p>0$, then $p=2g+1$, $\gcd(\n)=1$, and $\med_{i}(\m)=\med_{i+1}(\n)$, for $1\leq i\leq k-1$.
    \end{enumerate}
\end{cor}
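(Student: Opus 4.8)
The plan is to reduce everything to the fundamental theorem of finitely generated abelian groups applied to the explicit forms in Proposition \ref{prop:abelianization}. I would first isolate the one tool I need: if $a_1\mid a_2\mid\dots\mid a_r$ and $b_1\mid b_2\mid\dots\mid b_r$ are two divisibility chains of the \emph{same} length $r$ and $\Z_{a_1}\times\dots\times\Z_{a_r}\cong\Z_{b_1}\times\dots\times\Z_{b_r}$, then $a_i=b_i$ for all $i$. To see this, note that deleting the leading entries equal to $1$ from a divisibility chain leaves exactly the (essentially unique) invariant factor decomposition of the associated group, so uniqueness forces the two chains to have the same number of leading $1$'s and identical non-trivial tails, hence to agree termwise. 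Since $\med_i(\m)\mid\med_{j}(\m)$ for $i<j$, each $\med_\bullet(\m)$-sequence is such a chain, and Proposition \ref{prop:abelianization} presents every Fuchsian abelianization in precisely this shape.

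For part (1) I would write $(0;p_1;\m)^{\Ab}\cong\Z^{p_1-1}\times\Z_{\med_1(\m)}\times\dots\times\Z_{\med_k(\m)}$ and similarly for $\n$, compare free ranks to get $p_1=p_2$, then apply the chain lemma to the length-$k$ torsion parts to get $\med_i(\m)=\med_i(\n)$ for $1\le i\le k$; the product identity $m_1\cdots m_k=\prod_i\med_i(\m)$ recorded after the definition of $\med_i$ then gives $m_1\cdots m_k=n_1\cdots n_k$. Part (2) is the same argument with free part $\Z^{2g_1}$: rank comparison gives $g_1=g_2$, and the chain lemma applied to the length-$(k-1)$ torsion parts gives $\med_i(\m)=\med_i(\n)$ for $1\le i\le k-1$.

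Part (3) is where I expect the only real care is needed, because the two torsion parts --- $\Z_{\med_1(\m)}\times\dots\times\Z_{\med_{k-1}(\m)}$ from the unpunctured side and $\Z_{\med_1(\n)}\times\dots\times\Z_{\med_k(\n)}$ from the punctured side --- have \emph{a priori} different numbers of cyclic summands, so the chain lemma does not apply directly. Comparing free ranks first gives $2g=p-1$, i.e.\ $p=2g+1$. Then, since the number of non-trivial invariant factors is an isomorphism invariant and the unpunctured side contributes at most $k-1$ of them, the punctured chain $\med_1(\n)\mid\dots\mid\med_k(\n)$ must have a trivial entry; a divisibility chain can only be trivial in its leading entries, so $\gcd(\n)=\med_1(\n)=1$. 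Deleting that factor leaves two divisibility chains of equal length $k-1$, and the chain lemma finishes: $\med_i(\m)=\med_{i+1}(\n)$ for $1\le i\le k-1$. So the main obstacle is simply the bookkeeping of leading trivial $\Z_1$ factors in part (3); the rest is a mechanical application of the fundamental theorem to Proposition \ref{prop:abelianization}.
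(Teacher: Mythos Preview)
Your argument is correct and is exactly the route the paper intends: the corollary is stated without proof as an immediate consequence of Proposition~\ref{prop:abelianization} via uniqueness of the invariant factor decomposition, and your chain-lemma bookkeeping (including the handling of the extra $\Z_{\med_1(\n)}$ factor in part~(3)) is precisely what that deduction amounts to.
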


\subsection{First Betti numbers}

A Fuchsian group $(g;p;\m)$ is \textit{torsion-free} if and only if there are integers $g',p'\geq0$ such that $(g;p;\m)\cong(g';p';-)$. Equivalently, $\m$ contains only 1 or $\infty$ among its cone orders.

\begin{defn}
    The \textit{first Betti number} of a finitely generated group $\Ga$ is the integer $b_1(\Ga):=\mr{rank}_{\Q}(\Ga^{\Ab}\otimes_{\Z}\Q)$.
\end{defn}

Recall the Euler characteristic $\ds \chi(g;p;\m)=2-2g-p-\sum_{i=1}^k\left(1-\frac{1}{m_i}\right)$. If $\m$ does not contain $\infty$, it follows from Proposition \ref{prop:abelianization} that $$b_1(g;p;\m)=\begin{cases}
    2g+p-1 & \text{if } p>0\\
    2g & \text{if } p=0,\\
\end{cases}$$ ignoring any instances of the torsion arising from the multiset $\m$. We have the following result.

\begin{prop}\label{prop:torsionfree_chi_determines_b1}
    The Euler characteristic of a torsion-free Fuchsian group $(g;p;-)$ determines its first Betti number. In particular, $$b_1(g;p;-)=\begin{cases}
        1-\chi(g;p;-) & \text{if } p>0\\
        2-\chi(g;0;-) & \text{if } p=0.
    \end{cases}$$
\end{prop}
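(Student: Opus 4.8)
The plan is to reduce the statement to an explicit computation of both $\chi$ and $b_1$ as functions of $g$ and $p$, and then simply invert the resulting affine relation separately in the punctured and unpunctured cases.

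First I would unwind the hypothesis of torsion-freeness. By the characterization recalled just before the statement, $(g;p;-)$ torsion-free means its cone multiset consists only of $1$'s and $\infty$'s; deleting the $1$'s and transferring each $\infty$ into the puncture count $p$ (operations that alter neither the isomorphism type nor the Euler characteristic) leaves an empty cone multiset. Consequently the sum $\sum_i(1-1/m_i)$ in the definition of $\chi$ is vacuous, so $\chi(g;p;-)=2-2g-p$. Next I would invoke Proposition \ref{prop:abelianization} (equivalently the displayed formula for $b_1$ preceding the statement): with empty $\m$ it gives $(g;p;-)^{\Ab}\cong\Z^{2g+p-1}$ when $p>0$ and $(g;0;-)^{\Ab}\cong\Z^{2g}$ when $p=0$, hence $b_1(g;p;-)=2g+p-1$ for $p>0$ and $b_1(g;0;-)=2g$ for $p=0$.

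Finally I would combine the two. When $p>0$, from $\chi=2-2g-p$ we get $2g+p=2-\chi$, so $b_1=2g+p-1=1-\chi(g;p;-)$. When $p=0$, from $\chi=2-2g$ we get $2g=2-\chi$, so $b_1=2g=2-\chi(g;0;-)$. This is exactly the claimed two-case formula. The one point worth flagging is that the Euler characteristic on its own does not determine $b_1$ across different puncture types (a punctured and an unpunctured torsion-free Fuchsian group can share the same value of $\chi$ yet have different first Betti numbers), which is precisely why the formula is stated with a case split on whether $(g;p;-)$ is punctured; within a fixed puncture type the passage from $\chi$ to $b_1$ is the affine bijection exhibited above. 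There is no real obstacle here — the content is just bookkeeping with the Euler characteristic formula together with Proposition \ref{prop:abelianization}.
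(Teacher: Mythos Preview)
Your argument is correct and is exactly the computation the paper has in mind; in fact the paper states this proposition without proof, treating it as an immediate consequence of the formula $b_1(g;p;\m)=2g+p-1$ or $2g$ displayed just before it together with $\chi(g;p;-)=2-2g-p$. Your write-up simply makes that bookkeeping explicit.
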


\begin{propn}\label{prop:betti_bounds}
    Assume $\infty\not\in\m$. If $p>0$, then $b_1(g;p;\m)\leq1-\chi(g;p;\m)$, with equality if and only if $(g;p;\m)$ is torsion-free. If $p=0$, then $b_1(g;0;\m)\leq2-\chi(g;0;\m)$, with equality if and only if $(g;0;\m)$ is torsion-free.
\end{propn}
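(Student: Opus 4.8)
The plan is to reduce the statement to a transparent identity between the first Betti number and the Euler characteristic, and then read off both the inequality and the equality case from that identity. First I would invoke Proposition \ref{prop:abelianization}: since $\infty\not\in\m$, the abelianization of $(g;p;\m)$ is $\Z^{2g+p-1}$ times a finite abelian group when $p>0$, and $\Z^{2g}$ times a finite abelian group when $p=0$. Tensoring with $\Q$ annihilates the torsion factors $\Z_{\med_i(\m)}$, so $b_1(g;p;\m)=2g+p-1$ in the punctured case and $b_1(g;0;\m)=2g$ in the unpunctured case.

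Next I would compute the relevant difference directly from $\chi(g;p;\m)=2-2g-p-\sum_{i=1}^k(1-1/m_i)$. In the punctured case,
$$1-\chi(g;p;\m)-b_1(g;p;\m)=\Big(2g+p-1+\sum_{i=1}^k\big(1-\tfrac{1}{m_i}\big)\Big)-(2g+p-1)=\sum_{i=1}^k\Big(1-\frac{1}{m_i}\Big),$$
and in the unpunctured case the identical computation, with the additive constant $1$ replaced by $2$ and $2g+p-1$ replaced by $2g$, yields the same right-hand side $\sum_{i=1}^k(1-1/m_i)$. So in both cases the gap between the claimed bound and $b_1$ is exactly $\sum_{i=1}^k(1-1/m_i)$.

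Then I would observe that each summand $1-1/m_i$ is nonnegative since $m_i\geq1$, which gives both inequalities simultaneously. For the equality statements, $\sum_{i=1}^k(1-1/m_i)=0$ forces $m_i=1$ for every $i$, and conversely if every $m_i=1$ the sum vanishes. By the characterization of torsion-freeness recalled just before Proposition \ref{prop:torsionfree_chi_determines_b1}, and using the hypothesis $\infty\not\in\m$, the condition ``$m_i=1$ for all $i$'' is precisely that $(g;p;\m)$ be torsion-free (indeed $(g;p;\m)\cong(g;p;-)$ in that case). This completes both cases.

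I do not anticipate a genuine obstacle: the argument is a one-line algebraic identity once Proposition \ref{prop:abelianization} is available. The only points needing a word of care are that the Betti-number formula from Proposition \ref{prop:abelianization} genuinely discards the $\med_i(\m)$ torsion (immediate after $\otimes_{\Z}\Q$), and that ``all cone orders equal $1$'' coincides with the stated definition of torsion-freeness exactly because $\infty$ has been excluded from $\m$ by hypothesis.
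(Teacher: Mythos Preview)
Your proof is correct and is essentially the same as the paper's. The paper routes the inequality through the intermediate identity $b_1(g;p;\m)=b_1(g;p;-)=1-\chi(g;p;-)$ (respectively $2-\chi(g;0;-)$) and then compares $\chi(g;p;-)$ with $\chi(g;p;\m)$, while you expand directly; both arguments amount to the observation that the gap equals $\sum_{i=1}^k(1-1/m_i)\geq 0$ with equality exactly when every $m_i=1$.
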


\begin{proof}
    Notice when $p>0$, then $b_1(g;p;\m)=b_1(g;p;-)=1-\chi(g;p;-)\leq1-\chi(g;p;\m)$, and similarly, $b_1(g;0;\m)=b_1(g;0;-)=2-\chi(g;0;-)\leq2-\chi(g;0;\m)$ for $p=0$, providing piecewise upperbounds on the first Betti numbers. Now, suppose $b_1(g;p;\m)$ attains its upperbound, depending on the value of $p\geq0$. Then $\chi(g;p;-)=\chi(g;p;\m)$, since $b_1(g;p;\m)=b_1(g;p;-)$. This implies that $\ds \sum_{i=1}^{k}\left(1-\frac{1}{m_i}\right)=0$, which cannot occur if $\m$ contains any conepoint which is greater than 1. Therefore, $(g;0;\m)=(g;0;-)$ is torsion-free.
\end{proof}

\subsection{Euler characteristics}\label{eulerChar}

Let $\m=(m_1,\dots,m_k)$ and $\n=(n_1,\dots,n_k)$ be multisets, neither of which contains $\infty$. By possibly concatenating or removing 1's, we may take $\m$ and $\n$ to have the same length.

The $\ell$-adic valuation satisfies the \textit{ultrametric inequality} $v_{\ell}(r+s)\geq\min\{v_{\ell}(r),v_{\ell}(s)\}$. Moreover, if $v_{\ell}(r)\neq v_{\ell}(s)$, then $v_{\ell}(r+s)=\min\{v_{\ell}(r),v_{\ell}(s)\}$. It follows that if $v_{\ell}(x_1)\leq v_{\ell}(x_2)\leq\dots\leq v_{\ell}(x_k)$, then $$v_{\ell}(x_1+x_2+\dots+x_k)\geq v_{\ell}(x_1),$$ with strict inequality only if $v_{\ell}(x_1)=v_{\ell}(x_2)$.

\begin{propn}
    If $(g;0;\m)^{\Ab}\cong(g;0;\n)^{\Ab}$ and $\chi(g;0;\m)=\chi(g;0;\n)$, then $\lcm(\m)=\lcm(\n)$ and $m_1\dots m_k=n_1\dots n_k$.
\end{propn}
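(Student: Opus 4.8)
The plan is to leverage the hypothesis $(g;0;\m)^{\Ab}\cong(g;0;\n)^{\Ab}$ via Corollary \ref{cor:abelianization}(2), which gives $\med_i(\m)=\med_i(\n)$ for $1\leq i\leq k-1$. This is \emph{almost} enough to force all the products of $\med$'s to agree; the only missing piece is the top operator $\med_k(\m)=\lcm(\m)$ versus $\med_k(\n)=\lcm(\n)$. Since $\prod_{i=1}^k m_i=\prod_{i=1}^k\med_i(\m)$ and likewise for $\n$, it suffices to prove $\lcm(\m)=\lcm(\n)$; the equality $m_1\cdots m_k=n_1\cdots n_k$ then follows immediately by cancelling the common factors $\med_1,\dots,\med_{k-1}$. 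So the entire content is: \emph{agreement of $\med_1,\dots,\med_{k-1}$ together with equality of Euler characteristics forces $\med_k(\m)=\med_k(\n)$.}

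To exploit $\chi(g;0;\m)=\chi(g;0;\n)$, I would write it out and cancel the common term $2-2g$, reducing to $\sum_{i=1}^k\bigl(1-\tfrac1{m_i}\bigr)=\sum_{i=1}^k\bigl(1-\tfrac1{n_i}\bigr)$, i.e. $\sum_i \tfrac1{m_i}=\sum_i\tfrac1{n_i}$. The natural move is to work prime-by-prime using the $\ell$-adic valuation machinery set up in Section \ref{eulerChar}. Fix a prime $\ell$. Reindex so that $v_\ell(m_1)\leq\cdots\leq v_\ell(m_k)$; by definition of $\med_i$ these valuations are exactly $v_\ell(\med_1(\m)),\dots,v_\ell(\med_k(\m))$, and similarly for $\n$ (after its own reindexing). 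The already-known equalities $\med_i(\m)=\med_i(\n)$ for $i\le k-1$ say $v_\ell(\med_i(\m))=v_\ell(\med_i(\n))$ for $i\leq k-1$, so the two sorted valuation lists for $\m$ and $\n$ agree in their first $k-1$ entries and differ at most in the last. Now apply $v_\ell$ to the identity $\sum_i \tfrac1{m_i}-\sum_i\tfrac1{n_i}=0$: the terms $\tfrac1{m_i}$ with $i\le k-1$ cancel termwise against $\tfrac1{n_i}$ in valuation-controlled pairs, and what remains is $\tfrac1{m_k}-\tfrac1{n_k}=0$ forced on the nose — but to make this rigorous one must be careful since cancellation of \emph{rationals} is not termwise in valuation. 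The clean argument: suppose for contradiction $v_\ell(\med_k(\m))\ne v_\ell(\med_k(\n))$, say $v_\ell(m_k)=a<a'=v_\ell(n_k)$. Then in the equation $\sum_i\tfrac1{m_i}=\sum_i\tfrac1{n_i}$, the right-hand side has all summands with $v_\ell\geq -v_\ell(\med_{k-1}(\n))=-v_\ell(\med_{k-1}(\m))=:-c$ except none going as low as $-a'$ unless $a'\le c$; meanwhile the left-hand side has the summand $\tfrac1{m_k}$ with $v_\ell=-a$. Balancing the most negative valuation on each side, using the ultrametric principle that a sum has valuation equal to the minimum when that minimum is uniquely attained, pins down $a=a'$, a contradiction. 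Since this holds for every prime $\ell$, we get $\med_k(\m)=\med_k(\n)$, hence $\lcm(\m)=\lcm(\n)$.

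The main obstacle is the bookkeeping in that last step: one must argue carefully about \emph{which} summand attains the minimum valuation on each side, because several $\tfrac1{m_i}$ can share the extremal valuation (exactly the situation flagged in Section \ref{eulerChar}, where strict inequality in $v_\ell$ of a sum occurs when two summands tie). The right way to organize it is to compare the minimum $\ell$-adic valuation of $\sum_i\tfrac1{m_i}$ with that of $\sum_i\tfrac1{n_i}$: both are $\geq -v_\ell(\lcm)$, and I'd show the common Euler characteristic, combined with $\med_i(\m)=\med_i(\n)$ for $i<k$, forces the two $\lcm$'s to have the same $\ell$-valuation. A slightly slicker alternative avoiding case analysis: multiply the equation $\sum 1/m_i = \sum 1/n_i$ through by $L:=\lcm(\med_k(\m),\med_k(\n))$ to clear denominators, obtaining an integer identity $\sum_i L/m_i=\sum_i L/n_i$; then $v_\ell$ of each side, using that $v_\ell(L/m_i)\geq 0$ with equality iff $v_\ell(m_i)=v_\ell(L)$, isolates the top term and yields $v_\ell(\med_k(\m))=v_\ell(\med_k(\n))$ for each $\ell$. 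Either route finishes the proof.
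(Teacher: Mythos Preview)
Your proposal is correct and follows essentially the same route as the paper: invoke Corollary~\ref{cor:abelianization}(2) to get $\med_i(\m)=\med_i(\n)$ for $i\le k-1$, reduce $\chi(g;0;\m)=\chi(g;0;\n)$ to $\sum 1/m_i=\sum 1/n_i$, and then derive a contradiction from $v_\ell(\lcm(\m))<v_\ell(\lcm(\n))$ via the ultrametric inequality (the paper phrases the endgame as ``$v_\ell(T)>-\gamma$ forces $\gamma=\alpha_{k-1}\le\beta$'', which is exactly your unique-minimum argument in contrapositive form). Your denominator-clearing alternative is a harmless repackaging of the same valuation count.
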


\begin{proof}
    Corollary \ref{cor:abelianization} establishes that $\med_i(\m)=\med_i(\n)$, for $1\leq i\leq k-1$. The statement $m_1\dots m_k=n_1\dots n_k$ will hold true, once we argue that $\lcm(\m)=\lcm(\n)$. By way of contradiction, assume there is some prime $\ell$ such that $v_{\ell}(\lcm(\m))\neq v_{\ell}(\lcm(\n))$; without loss of generality, assume that $v_{\ell}(\lcm(\m))<v_{\ell}(\lcm(\n))$

    Set $\alpha_i:=v_{\ell}(\med_i(\m))=v_{\ell}(\med_{i}(\n))$ for $1\leq i\leq k-1$, $\beta:=v_{\ell}(\lcm(\m))$, and $\gamma:=v_{\ell}(\lcm(\n))$. We have $0\leq\alpha_1\leq\alpha_2\leq\dots\leq\alpha_{k-1}\leq\beta\lneq\gamma$. The condition on the Euler characteristics $\chi(g;0;\m)=\chi(g;0;\n)$ provides the following equation for the sum of reciprocals of the conepoints $$\underbrace{\frac{1}{m_1}+\dots+\frac{1}{m_k}}_{S}=\underbrace{\frac{1}{n_1}+\dots+\frac{1}{n_k}}_{T},$$ and observe that $v_{\ell}(S)\geq -\beta$ and $v_{\ell}(T)\geq -\gamma$ from the ultrametric inequality. Since $v_{\ell}(S)=v_{\ell}(T)$, we must have a strict inequality $v_{\ell}(T)>-\gamma$. It follows from another application of the ultrametric property that $\gamma=\alpha_{k-1}$, but this yields $\gamma=\alpha_{k-1}\leq\beta$, a contradiction. Therefore $v_{\ell}(\lcm(\m))=v_{\ell}(\lcm(\n))$ for every prime $\ell$.
\end{proof}

\begin{cor}\label{cor:lessthan2}
    Suppose that $\cvec,\dvec$ contains neither $1$ nor $\infty$, and let $|\cvec|=|\dvec|\leq2$. Consider the following cases.
    \begin{enumerate}
        \item Let $(g;0;\cvec)^{\Ab}\cong(g;0;\dvec)^{\Ab}$. Then $\chi(g;0;\cvec)=\chi(g;0;\dvec)$ if and only if $\cvec=\dvec$. 

        \item Let $(0;p;\cvec)^{\Ab}\cong(0;p;\dvec)^{\Ab}$ and $p>0$. Then $\chi(0;p;\cvec)=\chi(0;p;\dvec)$ if and only if $\cvec=\dvec$.
    \end{enumerate} 
\end{cor}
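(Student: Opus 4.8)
The reverse implication is trivial in both cases, so I focus on the forward direction. Since $|\cvec|=|\dvec|\le 2$ and neither multiset contains $1$ or $\infty$, there are only a few shapes to consider: both empty, both singletons, both of size two, or (after padding with $1$'s, which we are told not to do here, so instead) a singleton versus a size-two multiset. In the torsion-free case ($\cvec$ or $\dvec$ empty) the Euler characteristic already pins everything down via Proposition \ref{prop:torsionfree_chi_determines_b1}, and the abelianization hypothesis forces the other multiset to be empty too by Proposition \ref{prop:betti_bounds} (equality in the Betti bound holds iff torsion-free); so assume both are nonempty. The plan is to run the previous proposition's conclusion — $\lcm(\cvec)=\lcm(\dvec)$ and $c_1\cdots c_k = d_1\cdots d_k$ — and then argue that for multisets of size at most two these two symmetric-function constraints, together with $\gcd(\cvec)=\gcd(\dvec)$ coming from $\med_1$, are enough to recover the multiset.

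For case (1), the hypotheses of the preceding proposition are exactly met (same genus, isomorphic abelianizations, equal Euler characteristics), so I get $\lcm(\cvec)=\lcm(\dvec)=:L$ and $\prod c_i = \prod d_i$. If $k=1$ then $\cvec=(c_1)$, $\dvec=(d_1)$ and $c_1 = \lcm(\cvec) = \lcm(\dvec) = d_1$, done. If $k=2$ write $\cvec=(c_1,c_2)$, $\dvec=(d_1,d_2)$; from Corollary \ref{cor:abelianization}(2) we also have $\med_1(\cvec)=\med_1(\dvec)$, i.e. $\gcd(c_1,c_2)=\gcd(d_1,d_2)=:G$. Then $\{c_1,c_2\}$ and $\{d_1,d_2\}$ are two pairs with the same $\gcd$ $G$, the same $\lcm$ $L$, and the same product $GL = c_1c_2 = d_1d_2$ (the identity $\gcd\cdot\lcm = $ product for pairs); working prime by prime, the unordered pair of exponents $\{v_\ell(c_1),v_\ell(c_2)\}$ is determined by its min ($v_\ell(G)$) and max ($v_\ell(L)$), hence $\{c_1,c_2\}=\{d_1,d_2\}$ as multisets, i.e. $\cvec=\dvec$.

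For case (2), the punctured group $(0;p;\cvec)$ with $p>0$ is virtually free, and its abelianization is $\Z^{p-1}\times\Z_{\med_1(\cvec)}\times\cdots\times\Z_{\med_k(\cvec)}$ by Proposition \ref{prop:abelianization}; so $(0;p;\cvec)^{\Ab}\cong(0;p;\dvec)^{\Ab}$ gives $\med_i(\cvec)=\med_i(\dvec)$ for \emph{all} $1\le i\le k$ (Corollary \ref{cor:abelianization}(1)), in particular $\gcd$ and $\lcm$ agree, and already $\prod c_i = \prod d_i$. Thus for $k\le 2$ the multiset is recovered immediately: for $k=1$, $c_1=\med_1(\cvec)=\med_1(\dvec)=d_1$; for $k=2$, $(\med_1,\med_2)$ is exactly the sorted pair of cone orders up to reordering within equal prime-power slots, so $\cvec=\dvec$. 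I should still use the Euler-characteristic hypothesis to handle the mixed-size possibility $|\cvec|=1$, $|\dvec|=2$ (or vice versa) once $1$'s are disallowed: here $\med_\bullet$-matching from the abelianization already forces a contradiction unless both have the same underlying size, so the equal-length normalization is harmless.

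The only genuine subtlety — and the step I'd flag as the main obstacle — is making sure the case analysis is exhaustive once we insist $1\notin\cvec\cup\dvec$: the statement quietly assumes $\cvec$ and $\dvec$ have been put in a common length $|\cvec|=|\dvec|$, but "$1\notin\cvec$" means we cannot pad, so I must either (a) observe that the abelianization isomorphism forces the \emph{honest} lengths to coincide (comparing the number of nontrivial invariant factors, noting $\med_i$ for the padded slots equals $1$), or (b) treat the genuinely mixed-length case directly and show the Euler-characteristic equation $\sum 1/c_i = \sum 1/d_i$ is incompatible with a length mismatch when all entries are $\ge 2$ and the $\med$-sequences match. Either way it is short, but it is the place where the hypotheses "$1\notin\cvec\cup\dvec$" and "$|\cvec|=|\dvec|\le 2$" interact and must be handled carefully rather than waved through.
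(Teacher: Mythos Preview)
There is a genuine gap in both cases: the inference from ``same $\gcd$, same $\lcm$, same product'' to ``same multiset'' is false for pairs. Take $\cvec=(2,12)$ and $\dvec=(4,6)$: both have $\gcd=2$, $\lcm=12$, product $24$, and hence $\med_1(\cvec)=\med_1(\dvec)=2$, $\med_2(\cvec)=\med_2(\dvec)=12$; the abelianizations $(g;0;\cvec)^{\Ab}\cong\Z^{2g}\times\Z_2$ and $(0;p;\cvec)^{\Ab}\cong\Z^{p-1}\times\Z_2\times\Z_{12}$ match those for $\dvec$ in both the unpunctured and punctured settings. Yet $\cvec\neq\dvec$. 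Your sentence ``the unordered pair of exponents $\{v_\ell(c_1),v_\ell(c_2)\}$ is determined by its min and max, hence $\{c_1,c_2\}=\{d_1,d_2\}$'' is precisely where the argument breaks: knowing the min and max of the $\ell$-adic valuations at each prime separately does not tell you how those valuations are distributed across the two integers, since you may swap which integer carries the larger power independently at each prime. Likewise in case (2), the claim that ``$(\med_1,\med_2)$ is exactly the sorted pair of cone orders'' is simply false, as the same example shows.

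What rescues the argument is that you have not yet used the Euler characteristic in the size-two case at all (the previous proposition only extracts $\lcm$ and product, which for pairs are already determined by $\gcd$ and each other). The equation $\chi(\cvec)=\chi(\dvec)$ gives $\frac{1}{c_1}+\frac{1}{c_2}=\frac{1}{d_1}+\frac{1}{d_2}$, i.e.\ $\frac{c_1+c_2}{c_1c_2}=\frac{d_1+d_2}{d_1d_2}$; combined with $c_1c_2=d_1d_2$ this yields $c_1+c_2=d_1+d_2$, and a pair of numbers is determined by its sum and product (they are the roots of $x^2-(c_1+c_2)x+c_1c_2$). This is the paper's argument, and it is the missing ingredient in yours.
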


\begin{proof}
    The case where $|\cvec|=|\dvec|\leq1$ is clear, so assume $\cvec=(r,s)$ and $\dvec=(t,u)$. The condition on the Euler characteristics implies that $\frac{r+s}{rs}=\frac{t+u}{tu}$, and so we have $r+s=t+u$, since $rs=tu$ from Proposition \ref{prop:abelianization}. It follows that $(r,s)=(t,u)$, since both pairs represent the roots of the quadratic $x^2-(r+s)x+rs=x^2-(t+u)x+tu$.
\end{proof}

\section{Smooth and non-smooth quotients}

Let $G$ be a finite group. We say that a homomorphism $\vp\colon(g;p;\m)\to G$ is a \textit{smooth map} or \textit{smooth representation} of $(g;p;\m)$, if $\vp$ preserves the order of torsion elements in $(g;p;\m)$. In this case, we say that $Q:=\on{im}\vp$ is a \textit{smooth quotient} of $(g;p;\m)$. It suffices to check that the orders of only the standard elliptic generators of $(g;p;\m)$ are preserved under $\vp$. Equivalently, $G$ is a smooth quotient of $(g;p;\m)$ if $\ker\vp$ is torsion-free. Later in this section, we will develop more precise notions for maps which are not smooth.

\subsection{Smooth dihedral and \texorpdfstring{$\PSL_2$}{PSL2} representations}\label{psl}

We start by providing smooth dihedral and smooth $\PSL_2$ representations.

\begin{thm}\label{thm:smooth_dihedral}
    Let $\m=(m_1,\dots,m_k)$, with $k\geq1$, containing neither 1 nor $\infty$. There exists a finite dihedral group $D$ that is a smooth quotient of $(1;0;\m)$. Consequently, $D$ is a smooth quotient of $(g;p;\m)$ for $g\geq1$.
\end{thm}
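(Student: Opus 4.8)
The plan is to construct an explicit dihedral quotient $D = D_n$ of $(1;0;\m)$ in which every standard elliptic generator $x_i$ maps to an element of order exactly $m_i$. Recall that $(1;0;\m)$ has presentation $\langle \al_1,\be_1; x_1,\dots,x_k \mid [\al_1,\be_1]x_1\cdots x_k = x_i^{m_i}=1\rangle$, so to specify a homomorphism to a group $G$ it suffices to choose $a,b\in G$ and $t_1,\dots,t_k\in G$ with $t_i^{m_i}=1$ and $t_1\cdots t_k = [a,b]^{-1}$; smoothness requires moreover that $t_i$ have order exactly $m_i$. The key flexibility the genus-one factor $[\al_1,\be_1]$ buys us is that its image can be \emph{any} commutator in $G$, and in a dihedral group $D_n = \langle \rho, \sigma \mid \rho^n = \sigma^2 = 1,\ \sigma\rho\sigma = \rho^{-1}\rangle$ every rotation $\rho^{2j}$ (and, when $n$ is odd, every rotation) is a commutator, since $[\sigma, \rho^{-j}] = \sigma \rho^j \sigma^{-1} \rho^j = \rho^{-2j}$. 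So our target is to realize $t_1\cdots t_k$ as a rotation that lies in the commutator subgroup.

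First I would set $n := \lcm(\m) = \med_k(\m)$ (or a suitable multiple, e.g. $2\,\lcm(\m)$, to guarantee the relevant rotations are squares) and work in $D_n$. For each $i$, an element of order exactly $m_i$ in $D_n$ can be taken to be the rotation $\rho^{n/m_i}$ when $m_i \geq 3$; reflections have order $2$, which handles $m_i = 2$. The strategy is to pick, for each $i$, an elliptic image $t_i$ equal to $\rho^{n/m_i}$ times a sign, i.e. lying in $D_n$, and arrange the product to be a commutator. Concretely, one clean approach: send $x_1,\dots,x_{k}$ to reflections $s_1,\dots,s_k$ chosen so that $s_i$ has the correct order — but reflections only have order $2$, so this only works directly for $m_i=2$. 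Instead, for $m_i \geq 3$ I would pair up the elliptic generators differently, or use that a product of two reflections is a rotation: writing each $t_i$ as a rotation $\rho^{n/m_i}$, the product $t_1\cdots t_k = \rho^{\sum_i n/m_i}$ is automatically a rotation, and I need only check it is an even power of $\rho$, which I can force by replacing $n$ with $2n$ if necessary (every rotation in $D_{2n}$ coming from a power of $\rho^2$ is a commutator). Then choose $a = \sigma$, $b = \rho^{-j}$ with $2j \equiv -\sum_i n/m_i \pmod{n}$ to match $[a,b] = \rho^{-2j} = (t_1\cdots t_k)^{-1}$.

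The remaining point is surjectivity: the subgroup generated by $\rho^{n/m_1},\dots,\rho^{n/m_k},\sigma$ contains $\sigma$ and the rotation $\rho^{\gcd_i(n/m_i)} = \rho^{n/\lcm(\m)} = \rho$ (using $\lcm(\m) = n$ when we take $n = \lcm(\m)$; with $n = 2\lcm(\m)$ we get $\rho^2$ and $\sigma$, which generate a dihedral group of order $2\lcm(\m)$, still fine as a smooth quotient as long as each $\rho^{n/m_i}$ genuinely has order $m_i$ in that subgroup, which it does). So the image is all of $D_n$ (or the index-consideration is harmless). Finally, the "consequently" clause is immediate: $(g;p;\m)$ for $g\geq 1$ surjects onto $(1;0;\m)$ by killing the extra hyperbolic generators $\al_2,\be_2,\dots,\al_g,\be_g$ and the parabolic generators $y_1,\dots,y_p$ (this is a homomorphism precisely because it does not disturb the relation once those generators are trivial, and it preserves the elliptic generators), so composing with the smooth map $(1;0;\m)\twoheadrightarrow D$ gives a smooth map $(g;p;\m)\twoheadrightarrow D$.

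I expect the main obstacle to be the bookkeeping that makes $t_1\cdots t_k$ simultaneously (a) a product in which each factor has order exactly $m_i$ and (b) a commutator in $D_n$ — the parity/square condition on the exponent $\sum_i n/m_i$ — together with confirming that the chosen elements still generate the whole dihedral group so that $D$ is genuinely a quotient. Passing to $D_{2\lcm(\m)}$ resolves the parity issue at the cost of a slightly larger group, and one should double-check the edge case $k=1$ (a single cone order $m_1$), where $t_1 = [a,b]^{-1}$ must itself be forced to have order $m_1$, which again is arranged by choosing $b$ appropriately; and the case where all $m_i = 2$, where $D$ can be taken to be $D_2 \cong \Z_2\times\Z_2$ or larger.
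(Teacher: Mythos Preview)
Your approach is essentially the same as the paper's: send each elliptic generator $x_i$ to the rotation $r^{n/m_i}$ in a dihedral group with $n=\lcm(\m)$ (or $2\lcm(\m)$ to handle the parity of $\sum_i n/m_i$), and then choose the images of $\alpha_1,\beta_1$ to be a reflection and a suitable rotation so that their commutator absorbs the product $r^{\sum_i n/m_i}$. The paper carries this out with the explicit choices $x_i\mapsto r^{M/m_i}$, $a\mapsto r^{\alpha}$, $b\mapsto s$ (with $\alpha$ solving $2\alpha\equiv -\sum_i M/m_i$) when $M$ is odd, and the analogous map into $D_{2\cdot(2M)}$ when $M$ is even; your sketch would tighten up by fixing a commutator convention (your displayed identity $[\sigma,\rho^{-j}]=\sigma\rho^j\sigma^{-1}\rho^j$ is miswritten) and by dropping the digression on reflections, but the substance is the same.
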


\begin{proof}
    Take the following group presentation: $(1;0;\m)\cong\langle a,b;x_1,\dots,x_k\mid[a,b]x_1\dots x_k=x_i^{m_i}=1\rangle$. The dihedral group $\D_{2n}=\langle r,s\mid r^n=s^2=1,\ srs=r^{-1}\rangle$ of order $2n$ has commutator subgroup $[\D_{2n},\D_{2n}]=\langle r^2\rangle$ which is cyclic. In particular, when $n$ is odd, $\langle r^2\rangle$ has order $n$, and when $n$ is even, $\langle r^2\rangle$ has order $n/2$.

    If $M=\lcm(\m)$ is odd, then the following map is smooth: $\vp_M\colon(1;0;\m)\to\D_{2M}$ defined by $x_i\mapsto r^{M/m_i}$, $a\mapsto r^{\alpha}$, and $b\mapsto s$, where $\alpha:=-\frac{M+1}{2}\left(\frac{M}{m_1}+\dots+\frac{M}{m_k}\right)$. On the other hand, if $M$ is even, then the following map $\vp_{2M}\colon(1;0;\m)\to\D_{2\cdot(2M)}$ is smooth: $x_i\mapsto r^{2M/m_i}$, $a\mapsto r^{\beta}$, and $b\mapsto s$, where $\beta:=-\left(\frac{M}{m_1}+\dots+\frac{M}{m_k}\right)$. In either case, the images of $\vp_M$ and $\vp_{2M}$ are dihedral subgroups.
\end{proof}

We consider smooth representations of Fuchsian groups to $\PSL(2,q):=\SL(2,q)/\{\pm I_2\}$. This setting is extremely useful for Fuchsian groups $(g;p;\m)$ where $g=p=0$. The following result is from Macbeath \cite{macbeath}, and is repurposed for our needs.

\begin{thm}\label{thm:macbeath}
    Let $\m=(m_1,\dots,m_k)$ be a multiset such that $1,\infty\not\in\m$ and $k\geq3$. Suppose $q=\ell^e>1$ is an odd prime power. There exists a smooth representation of $\De(\m)$ into $\PSL(2,q)$ if and only if each of the integers $m_1,\dots,m_k$ divides one of $\ell,\ \frac{q-1}{2}$, or $\frac{q+1}{2}$.
\end{thm}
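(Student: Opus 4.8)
The plan is to prove Macbeath's criterion by analyzing the conjugacy classes of elements of $\PSL(2,q)$ by their orders, and then showing both directions of the biconditional.

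\textbf{Setup: element orders in $\PSL(2,q)$.} First I would recall the classification of elements of $G=\PSL(2,q)$ for $q=\ell^e$ odd: a nonidentity element is either unipotent (order $\ell$), or semisimple split (contained in a cyclic torus of order $(q-1)/2$, i.e., the image of a diagonalizable element of $\SL(2,q)$ with eigenvalues in $\F_q^\times$), or semisimple nonsplit (contained in a cyclic torus of order $(q+1)/2$, coming from eigenvalues in $\F_{q^2}\setminus\F_q$). Consequently the order of any element of $G$ divides one of $\ell$, $(q-1)/2$, $(q+1)/2$. This gives the easy (``only if'') direction immediately: if $\vp\colon\De(\m)\to\PSL(2,q)$ is smooth, then each $\vp(x_i)$ has order exactly $m_i$, so each $m_i$ is the order of an element of $G$ and hence divides one of the three quantities.

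\textbf{The ``if'' direction — constructing a smooth representation.} Assume each $m_i$ divides $\ell$, $(q-1)/2$, or $(q+1)/2$. For each $i$ I would pick an element $g_i\in\PSL(2,q)$ of order exactly $m_i$: a unipotent element if $m_i\mid\ell$ (when $m_i=\ell$ itself), or a generator of the appropriate power of a split/nonsplit torus otherwise (note $m_i$ could be $1$, but that's excluded; and one must be a little careful when $m_i\mid\ell$ with $m_i<\ell$, which forces $m_i=1$ unless $\ell$ is... actually since $\ell$ is prime, $m_i\mid\ell$ and $m_i\neq 1$ forces $m_i=\ell$). Since $\De(\m)=\langle x_1,\dots,x_k\mid x_1\cdots x_k = x_i^{m_i}=1\rangle$, giving a homomorphism to $G$ with $\vp(x_i)$ of order $m_i$ amounts to finding elements $g_1,\dots,g_k\in G$ with $g_i$ of order $m_i$ and $g_1\cdots g_k = 1$; smoothness is then automatic. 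This is where I would invoke Macbeath's original argument: the key tool is that in $\PSL(2,q)$ (or working in $\SL(2,q)$ and reducing) one has great freedom to conjugate the $g_i$ within their conjugacy classes, and a dimension/counting argument — or Macbeath's explicit ``$(F,G,H)$-triple'' trace computations — shows that one can choose conjugates $g_i' = h_i g_i h_i^{-1}$ whose product is the identity, provided $k\geq 3$ and the image is not forced into a proper subgroup. Concretely, for $k=3$ this is the statement that for suitable trace parameters the variety of solutions to $g_1g_2g_3=1$ with prescribed traces is nonempty and hits a generating triple; for $k>3$ one reduces to the $k=3$ case by absorbing $g_4\cdots g_k$ into a single factor (choosing its order to be a suitable element order — here one uses that products of the available torus elements sweep out enough element orders), or by an inductive splitting.

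\textbf{Main obstacle.} The genuinely delicate point is ensuring the constructed representation is \emph{smooth} rather than merely a homomorphism — i.e., that $\vp(x_i)$ has order exactly $m_i$ and not a proper divisor — while \emph{simultaneously} arranging $g_1\cdots g_k=1$. Picking each $g_i$ of exact order $m_i$ is easy in isolation; the constraint $\prod g_i = 1$ is what couples the choices, and one must verify the relevant solution variety (cut out by prescribed traces/conjugacy classes) is nonempty. I expect to handle this exactly as Macbeath does: via explicit matrix computations in $\SL(2,q)$ producing a triple $(A,B,(AB)^{-1})$ with any prescribed pair of traces $(\tr A,\tr B,\tr AB)$ realizing the desired orders — the trace of a split element of order $m$ is $\zeta+\zeta^{-1}$ for $\zeta$ a primitive $m$-th (or $2m$-th, in $\SL_2$) root of unity in $\F_q$, and of a nonsplit element $\zeta+\zeta^{-1}$ for $\zeta\in\F_{q^2}$, while unipotents have trace $\pm2$. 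One then checks such a triple generates (or at least has image containing) the needed torsion and, for $k>3$, iterates. Translating Macbeath's statement, which is phrased in terms of generating triples for the full group, into the ``smooth quotient'' language (allowing the image to be a proper subgroup, as the theorem only asserts a representation ``into'' $\PSL(2,q)$) is the bookkeeping one must be careful about, but it only makes the task easier.
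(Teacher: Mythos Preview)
The paper does not supply its own proof of this theorem; it is quoted from Macbeath \cite{macbeath} and used as a black box, so there is no in-paper argument to compare against.

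Your sketch is essentially Macbeath's original line of attack, and the ``only if'' direction is correct as written: the element-order classification in $\PSL(2,q)$ for odd $q$ forces each $m_i$ to divide one of $\ell,\ (q-1)/2,\ (q+1)/2$. For the ``if'' direction you have correctly identified the decisive input, namely the trace-triple lemma: for any $\alpha,\beta,\gamma\in\F_q$ there exist $A,B\in\SL(2,q)$ with $\Tr A=\alpha$, $\Tr B=\beta$, $\Tr(AB)=\gamma$. This settles $k=3$ once one chooses $\alpha,\beta,\gamma$ corresponding to elements of orders $m_1,m_2,m_3$ and descends to $\PSL$. Your reduction for $k>3$ is a little loose, however. ``Absorbing $g_4\cdots g_k$ into a single factor'' does not literally place you back in the $k=3$ hypothesis, since the order of that product is uncontrolled. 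A clean way to make your idea work: fix $g_3,\dots,g_k$ of the exact orders $m_3,\dots,m_k$ and set $h=(g_3\cdots g_k)^{-1}$; the trace-triple lemma then produces $g_1,g_2$ of orders $m_1,m_2$ whose product has the same trace as $h$, and since in $\PSL(2,q)$ trace (up to sign) determines the conjugacy class away from the unipotent locus, a simultaneous conjugation aligns $g_1g_2$ with $h$. The unipotent and $\pm2$-trace edge cases need a sentence of care but cause no trouble. Your final remark that the theorem only asks for a representation \emph{into} $\PSL(2,q)$, not a surjection, is apt and does simplify matters relative to Macbeath's sharper statement about generating triples.
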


Retaining the hypothesis and notation from Theorem \ref{thm:macbeath}, notice there is an odd prime power $q>1$ such that each of the $m_i$ divides $\frac{q-1}{2}$. Take any odd prime $\ell$ that is coprime to $L:=m_1m_2\dots m_k$. Then $\ell$ is a unit in the ring $\Z/2L\Z$, and so there exists $e>0$ such that $\ell^e\equiv1\pmod{2L}$. Setting $q:=\ell^e$, we have our desired result.

\begin{rmk}
    Theorems \ref{thm:smooth_dihedral} and \ref{thm:macbeath} provide for an effective version of Selberg's lemma \cite{selberg} for Fuchsian groups, which states that a finitely generated linear group over a field of characteristic zero is virtually torsion-free.
\end{rmk}

\subsection{Degrees of smoothness of quotients}\label{smoothDeg}

The following theorem will be crucial for producing finite quotients for many of our situations.

\begin{thm}\label{thm:homologyTrick}
    Let $G$ be a finite group. If there is a surjective map $\pi\colon(g_1;p_1;\m)\twoheadrightarrow G$ such that every surjective map $s\colon(g_2;p_2;\n)\twoheadrightarrow G$ satisfies $b_1(\ker s)<b_1(\ker\pi)$, then there is an abelian extension $Q$ of $G$ such that $Q$ is a finite quotient of $(g_1;p_1;\m)$ but not for $(g_2;p_2;\n)$.
\end{thm}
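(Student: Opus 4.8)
The plan is to use the surjection $\pi\colon(g_1;p_1;\m)\twoheadrightarrow G$ to pull back a finite abelian quotient of $\ker\pi$ and assemble a group extension $Q$ of $G$ whose first Betti number "remembers" $b_1(\ker\pi)$, and then to show that no surjection from $(g_2;p_2;\n)$ onto $Q$ can exist because any such surjection would have to factor through $G$ and would force $b_1(\ker s)\geq b_1(\ker\pi)$, contradicting the hypothesis. Concretely, write $K_1:=\ker\pi$, a finitely generated Fuchsian group, and let $r:=b_1(K_1)$. Choose a prime $p$ (any prime coprime to $|G|$ and to all cone orders will do) and set $A:=K_1^{\Ab}\otimes\F_p\cong\F_p^{\,r}$ if $K_1$ is punctured, or the analogous elementary abelian quotient in the unpunctured case; let $K_1\twoheadrightarrow A$ be the corresponding characteristic quotient, with kernel $K_1'\trianglelefteq(g_1;p_1;\m)$. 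Then $Q:=(g_1;p_1;\m)/K_1'$ is a finite group sitting in a short exact sequence $1\to A\to Q\to G\to 1$ with $A$ elementary abelian, so $Q$ is an abelian extension of $G$ and visibly a finite quotient of $(g_1;p_1;\m)$.

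The key steps, in order, are: (1) verify that the $\F_p$-homology quotient is characteristic in $K_1$ so that $K_1'$ is normal in the full group and the extension is well defined; (2) observe that $Q$ surjects onto $G$ with elementary abelian kernel of $\F_p$-rank exactly $r$; (3) suppose for contradiction $s'\colon(g_2;p_2;\n)\twoheadrightarrow Q$ exists, compose with $Q\twoheadrightarrow G$ to obtain a surjection $s\colon(g_2;p_2;\n)\twoheadrightarrow G$, and set $K_2:=\ker s$; (4) note $K_2$ surjects onto $\ker(Q\twoheadrightarrow G)=A\cong\F_p^{\,r}$, hence $\dim_{\F_p}(K_2^{\Ab}\otimes\F_p)\geq r$, which forces $b_1(K_2)\geq r-(\text{possible }p\text{-torsion correction})$; (5) upgrade this to $b_1(K_2)\geq r=b_1(K_1)$, contradicting the hypothesis $b_1(\ker s)<b_1(\ker\pi)$. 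For step (5) one wants the prime $p$ chosen so that the surjection $K_2\twoheadrightarrow\F_p^{\,r}$ is detected by the rational homology: since $K_2$ is itself a Fuchsian group, $K_2^{\Ab}$ has a free part of rank $b_1(K_2)$ plus torsion coming only from its cone orders (Proposition \ref{prop:abelianization}), so if $p$ is coprime to $L=\lcm(M,N)$ — and hence coprime to every cone order that can appear in any finite-index subgroup of either $\Ga$ or $\La$ — then $K_2^{\Ab}\otimes\F_p\cong\F_p^{\,b_1(K_2)}$, giving $b_1(K_2)\geq r$ exactly.

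The main obstacle is step (5): controlling the torsion in $K_2^{\Ab}$ so that an $\F_p$-surjection genuinely lifts to a $\Z$-rank statement. This is where the coprimality of $p$ to all relevant cone orders is essential, and it is the reason the statement is phrased with an \emph{abelian} extension rather than an arbitrary one — the kernel must be $p$-elementary-abelian for a prime $p$ invisible to the torsion of finite-index subgroups of $\Ga$ and $\La$. A secondary point requiring care is that the hypothesis is about \emph{surjective} maps $s$ onto $G$, so one must make sure the composite $s'$ followed by $Q\twoheadrightarrow G$ is still surjective (immediate, since a composite of surjections) and that $K_2=\ker s$ is precisely the preimage under $s'$ of $A$, so that the surjection $K_2\twoheadrightarrow A$ used in step (4) is the one induced by $s'$. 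Once the prime is chosen correctly everything else is bookkeeping with Proposition \ref{prop:abelianization} and the Riemann--Hurwitz formula, and the conclusion that $Q$ is a quotient of $(g_1;p_1;\m)$ but not of $(g_2;p_2;\n)$ follows.
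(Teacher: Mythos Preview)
Your proposal is correct and follows essentially the same route as the paper: form $Q$ by killing the characteristic subgroup $K_1^{p}[K_1,K_1]$ (the paper uses an integer $a$ coprime to $\lcm(\m)\cdot\lcm(\n)$ in place of your prime $p$, and does not need coprimality to $|G|$), then derive a contradiction from a hypothetical surjection $(g_2;p_2;\n)\twoheadrightarrow Q$ by composing with $Q\twoheadrightarrow G$ and observing that the resulting kernel surjects onto $(\Z/a)^{r}$, which is incompatible with $b_1(K_2)<r$ once $a$ avoids all cone-order torsion. Your case distinction ``punctured vs.\ unpunctured'' for $K_1^{\Ab}\otimes\F_p$ is unnecessary---the identification $K_1^{\Ab}\otimes\F_p\cong\F_p^{\,r}$ holds uniformly once $p$ is coprime to $\lcm(\m)$---but otherwise the argument and the paper's coincide.
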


\begin{proof}
    Take $K:=\ker\pi$, $f:=b_1(K)$, and $a>1$ an integer coprime to both $\lcm(\m)$ and $\lcm(\n)$. Consider the subgroup $L:=K^{(a)}[K,K]$, which is the join of the commutator subgroup $[K,K]$ with the subgroup $K^{(a)}$ generated by the $a$-th powers of elements in $K$. Since $L$ is a characteristic subgroup of $K$ and $K$ is a normal subgroup of $(g_1;p_1;\m)$, we have that $L$ is a normal subgroup of $(g_1;p_1;\m)$. Define $Q:=(g_1;p_1;\m)/L$. The surjective map $\pi\colon(g_1;p_1;\m)\twoheadrightarrow G$ naturally induces a map $\tilde{\pi}\colon Q\twoheadrightarrow G$ such that $\ker\tilde{\pi}=K/L\cong(\Z_a)^f$, which follows from our choice of $a$. This shows that $Q$ is a \textit{finite} quotient of $(g_1;p_1;\m)$.

    Suppose by way of contradiction, there is a surjection $t\colon(g_2;p_2;\n)\twoheadrightarrow Q$, and set $L':=\ker t$. From the hypothesis, the composite map $s=\tilde{\pi}\circ t\colon(g_2;p_2;\n)\twoheadrightarrow G$ with $K':=\ker s$ satisfies $b_1(K')<f$. Factoring $s$ through $(g_2;p_2;\n)/L'\cong Q$ yields a surjective map $\tilde{s}\colon Q\twoheadrightarrow G$ having kernel $K'/L'$.

    It follows that $K'/L'\cong K/L\cong(\Z_a)^f$. This gives us a surjective map $K'\twoheadrightarrow(\Z_a)^f$, which factors through the abelianization $(K')^{\Ab}$. This would imply that, as a result of how $a$ was chosen, that a surjective map $\Z^{b_1(K')}\twoheadrightarrow(\Z_a)^f$ exists, which is impossible since $b_1(K')<f$. Thus, $Q$ cannot be a quotient for $(g_2;p_2;\n)$.
\end{proof}

The hypothesis in Theorem \ref{thm:homologyTrick} can be thought of in the following way. While $G$ is a quotient for \textit{both} of the groups $(g_1;p_1;\m)$ and $(g_2,p_2;\n)$, the maximality condition on the first Betti numbers is detecting that $G$ is a ``smoother'' quotient for $(g_1;p_1;\m)$ than $G$ is as a quotient for $(g_2;p_2;\n)$. We now formalize this notion of the \textit{degree of smoothness} of a quotient $G$ for a Fuchsian group.

\begin{defn}
    Let $\m=(m_1,\dots,m_k)$ be a multiset, not containing $\infty$. We say that $\cvec=(c_1,\dots,c_k)$ is a \emph{factor (or divisor)} of $\m$, denoted $\cvec\mid\m$, if after a possible rearrangement of $\cvec$, we have $c_i\mid m_i$ for each $i$. There is a partial order on the set of factors of $\m$, where $\cvec\leq\dvec\xLeftrightarrow{\text{def}}\cvec\mid\dvec$.
\end{defn}

\begin{defn}
    Let $(g;p;\m)$ be a Fuchsian group, where $\infty\not\in\m$, and let $x_1,\dots,x_k\in(g;p;\m)$ be the standard elliptic generators with orders $m_1,\dots,m_k$, respectively. For a finite group $G$ and a factor $\cvec\mid\m$, define the following.
    \begin{enumerate}
        \item A homomorphism $\vp\colon(g;p;\m)\to G$ is a \textit{$\cvec$-smooth map (or representation)}, if the elements of $\cvec$ correspond to the orders of $\vp(x_1),\dots,\vp(x_k)$ in $G$. In this context, we also say that $Q:=\on{im}\vp$ is \textit{$\cvec$-smooth}.
    
        \item We say $G$ is a \textit{$\cvec$-smooth quotient of $(g;p;\m)$}, if there exists a $\cvec$-smooth surjective map $\pi\colon(g;p;\m)\twoheadrightarrow G$.

        \item We say $G$ is a \textit{$\cvec$-maximally smooth quotient of $(g;p;\m)$}, if $G$ is a $\cvec$-smooth quotient of $(g;p;\m)$ and the value of $\chi(\cvec)\in\Q$ is the least such among the values of $\chi(\cvec')$ for which $G$ is $\cvec'$-smooth. In such a case, we say that any $\cvec$-smooth surjection $\pi\colon(g;p;\m)\twoheadrightarrow G$ is \textit{$\cvec$-maximally smooth}.
    \end{enumerate}
\end{defn}

In particular, a homomorphism $\vp\colon(g;p;\m)\to G$ is $\m$-maximally smooth if and only if $\vp$ is smooth in the original sense.

\begin{cor}
    Let $\Ga,\La$ be non-isomorphic Fuchsian groups of the same puncture type such that $\Ga^{\Ab}\cong\La^{\Ab}$. Suppose $G$ is a finite group and there are surjective homomorphisms $\pi_1\colon\Ga\twoheadrightarrow G$ and $\pi_2\colon\La\twoheadrightarrow G$ which are $\cvec$-maximally smooth and $\dvec$-maximally smooth, respectively. If $\chi(\cvec)<\chi(\dvec)$, then there exists a finite abelian extension $Q$ of $G$ such that $Q$ is a finite quotient of $\Ga$ but not for $\La$.
\end{cor}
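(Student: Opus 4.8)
The plan is to derive the statement from Theorem~\ref{thm:homologyTrick} applied with $\pi=\pi_1\colon\Ga\twoheadrightarrow G$. Since $G$ is a quotient of both groups, the only thing to verify is the hypothesis of that theorem, namely that $b_1(\ker s)<b_1(\ker\pi_1)$ for \emph{every} surjection $s\colon\La\twoheadrightarrow G$. The point is that both first Betti numbers are completely determined by the Riemann--Hurwitz bookkeeping of Section~\ref{FIsubgroups}, and in the difference $b_1(\ker\pi_1)-b_1(\ker s)$ all of the genus data, the puncture data, and the (otherwise unconstrained) orders of the images of the parabolic generators cancel, leaving only a comparison of the orbifold Euler characteristics $\chi(\cvec)$ and $\chi(\mathbf{e})$, where $\mathbf{e}\mid\n$ records the orders in $G$ of the images under $s$ of the standard elliptic generators of $\La$.

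I would first record the kernel formula. For a surjection $\psi\colon(g;p;\m)\twoheadrightarrow G$, let $\mathbf{e}\mid\m$ be the multiset of orders in $G$ of the images of the standard elliptic generators. By the computation leading to \eqref{eqn:finding_kernel_bettiNum}, the Fuchsian group $\ker\psi$ has genus $g'$ and $p'$ punctures (with $p'>0$ precisely when $p>0$, by Section~\ref{FIsubgroups}) satisfying $\chi(g';p';-)=|G|\,\chi(g;p;\mathbf{e})$. Since the first Betti number ignores torsion, Proposition~\ref{prop:torsionfree_chi_determines_b1} gives
\[
b_1(\ker\psi)=\begin{cases}1-|G|\,\chi(g;p;\mathbf{e}) & \text{if }p>0,\\ 2-|G|\,\chi(g;p;\mathbf{e}) & \text{if }p=0;\end{cases}
\]
the essential observation is that the individual orders of the parabolic images only trade $g'$ against $p'$ while $2g'+p'$, hence $b_1(\ker\psi)$, is untouched. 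Writing $\chi(g;p;\mathbf{e})=\chi(\mathbf{e})-(2g+p)$ with $\chi(\mathbf{e}):=\chi(0;0;\mathbf{e})$, this reads $b_1(\ker\psi)=(2-\delta)+|G|\bigl((2g+p)-\chi(\mathbf{e})\bigr)$, where $\delta=1$ if $p>0$ and $\delta=0$ if $p=0$.

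Next I would kill the genus/puncture term. From $\Ga^{\Ab}\cong\La^{\Ab}$ and Proposition~\ref{prop:abelianization}, the punctured case forces $2g_1+p_1=2g_2+p_2$, and the unpunctured case forces $g_1=g_2$ by Corollary~\ref{cor:abelianization}(2); in all cases $2g_1+p_1=2g_2+p_2=:C$, and the common puncture type gives a common $\delta$. Applying the formula above to $\pi_1$ (whose elliptic-image orders are $\cvec$, since $\pi_1$ is $\cvec$-smooth) and to an arbitrary surjection $s\colon\La\twoheadrightarrow G$ with elliptic-image orders $\mathbf{e}\mid\n$, I get $b_1(\ker\pi_1)-b_1(\ker s)=|G|\bigl(\chi(\mathbf{e})-\chi(\cvec)\bigr)$. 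Now $s$ is $\mathbf{e}$-smooth, so $G$ is an $\mathbf{e}$-smooth quotient of $\La$, and the $\dvec$-maximal smoothness of $\pi_2$ gives $\chi(\mathbf{e})\geq\chi(\dvec)$; together with the hypothesis $\chi(\cvec)<\chi(\dvec)$ this yields $\chi(\mathbf{e})>\chi(\cvec)$, so $b_1(\ker s)<b_1(\ker\pi_1)$ for every such $s$. Theorem~\ref{thm:homologyTrick} then delivers the desired abelian extension $Q$ of $G$, a finite quotient of $\Ga$ but not of $\La$.

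There is no single hard step; the work is the bookkeeping in the two previous paragraphs. The one thing to be careful about is keeping the ``signature'' Euler characteristic $\chi(g;p;\mathbf{e})$ distinct from the orbifold quantity $\chi(\mathbf{e})$, and checking that the constant $C$ and the indicator $\delta$ are genuinely shared by $\Ga$ and $\La$ --- this is exactly where $\Ga^{\Ab}\cong\La^{\Ab}$ and the equal puncture type are used. Finally, note that only the maximal smoothness of $\pi_2$ is needed; the hypothesis on $\pi_1$ is used only in that $\pi_1$ is a $\cvec$-smooth surjection onto $G$.
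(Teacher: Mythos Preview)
Your proof is correct and follows the same route as the paper's: reduce to Theorem~\ref{thm:homologyTrick} by using equation~\eqref{eqn:finding_kernel_bettiNum} together with Proposition~\ref{prop:torsionfree_chi_determines_b1} to compare the first Betti numbers of the kernels. Your write-up is more explicit than the paper's one-line argument---in particular you spell out why the hypothesis of Theorem~\ref{thm:homologyTrick} holds for \emph{every} surjection $s\colon\La\twoheadrightarrow G$ (via the $\dvec$-maximal smoothness of $\pi_2$), and you correctly observe that only $\cvec$-smoothness, not maximality, is needed of $\pi_1$.
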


\begin{proof}
    This follows from Theorem \ref{thm:homologyTrick}, since equation (\ref{eqn:finding_kernel_bettiNum}), Proposition \ref{prop:torsionfree_chi_determines_b1}, and the assumption $\chi(\cvec)<\chi(\dvec)$ imply that $\bet(\ker\pi_1)>\bet(\ker\pi_2)$.
\end{proof}

To contrast the previous Corollary, the following proposition deals with Fuchsian groups with mixed puncture types.

\begin{prop}\label{prop:different_puncture_types}
    Let $\m,\n$ be (possibly empty) multisets, neither containing 1 nor $\infty$. Suppose $\Ga=(g;0;\m)$ and $\La=(0;p;\n)$ with $g,p>0$. Then there exists a finite quotient $Q$ for one of the Fuchsian groups but not for the other.
\end{prop}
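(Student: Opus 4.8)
The plan is to invoke the homology trick of Theorem~\ref{thm:homologyTrick} in one of two directions, selected by the sign of $\chi(\Ga)-\chi(\La)$, exploiting the opposite puncture types of the two groups. The underlying estimates are uniform. Since every finite-index subgroup of the unpunctured group $\Ga=(g;0;\m)$ is unpunctured and every finite-index subgroup of the punctured group $\La=(0;p;\n)$ is punctured, for any finite group $G$ and surjections $\pi\colon\Ga\twoheadrightarrow G$ and $s\colon\La\twoheadrightarrow G$ the kernels $\ker\pi$ and $\ker s$ are Fuchsian groups of index $|G|$ that are respectively unpunctured and punctured. The Riemann--Hurwitz formula gives $\chi(\ker\pi)=|G|\chi(\Ga)$ and $\chi(\ker s)=|G|\chi(\La)$, so Proposition~\ref{prop:betti_bounds} yields
\[
b_1(\ker\pi)\leq 2-|G|\chi(\Ga),\qquad b_1(\ker s)\leq 1-|G|\chi(\La),
\]
with equality in the first whenever $\pi$ is smooth and in the second whenever $s$ is smooth (in those cases the kernel is torsion-free and Proposition~\ref{prop:torsionfree_chi_determines_b1} applies).

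Suppose first that $\chi(\Ga)\leq\chi(\La)$. Since $g\geq1$, the group $\Ga$ has a smooth finite quotient $G$: a dihedral one is provided by Theorem~\ref{thm:smooth_dihedral} when $\m$ contains a cone order exceeding $1$, and when $\m$ is empty $\Ga$ is a torsion-free surface group, every finite quotient of which is smooth. Fixing a smooth surjection $\pi\colon\Ga\twoheadrightarrow G$, so that $b_1(\ker\pi)=2-|G|\chi(\Ga)$, every surjection $s\colon\La\twoheadrightarrow G$ satisfies $b_1(\ker s)\leq 1-|G|\chi(\La)\leq 1-|G|\chi(\Ga)<2-|G|\chi(\Ga)=b_1(\ker\pi)$. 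Theorem~\ref{thm:homologyTrick}, with $\Ga$ playing the role of $(g_1;p_1;\m)$ and $\La$ that of $(g_2;p_2;\n)$, then produces an abelian extension $Q$ of $G$ which is a finite quotient of $\Ga$ but not of $\La$.

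Suppose instead that $\chi(\Ga)>\chi(\La)$, so $\chi(\Ga)-\chi(\La)$ is a positive rational. The key input here, isolated in the next paragraph, is that the non-compact group $\La$ has smooth finite quotients of arbitrarily large order; choose such a quotient $G$ with $|G|(\chi(\Ga)-\chi(\La))>1$ and a smooth surjection $\pi\colon\La\twoheadrightarrow G$, so that $b_1(\ker\pi)=1-|G|\chi(\La)$. Every surjection $s\colon\Ga\twoheadrightarrow G$ then satisfies $b_1(\ker s)\leq 2-|G|\chi(\Ga)<1+|G|(\chi(\Ga)-\chi(\La))-|G|\chi(\Ga)=1-|G|\chi(\La)=b_1(\ker\pi)$, so Theorem~\ref{thm:homologyTrick}, with the roles of $\La$ and $\Ga$ now reversed, yields an abelian extension $Q$ of $G$ which is a finite quotient of $\La$ but not of $\Ga$. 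In either case a distinguishing finite quotient has been found.

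The remaining point, and the only step requiring genuine work, is that $\La=(0;p;\n)$ admits smooth finite quotients of unbounded order. When $p\geq2$ this is elementary: with $L:=\lcm(\n)$ and $N$ any multiple of $L$, one maps one of the free generators of $\La$ onto a generator of $\Z_N$, the remaining free generators to $0$, and the $i$-th elliptic generator to an element of order exactly $n_i$, obtaining a smooth surjection onto $\Z_N$. When $p=1$, so $\La\cong\Z_{n_1}*\cdots*\Z_{n_k}$ with $k\geq2$ and no free generator available, one instead produces a smooth surjection of $\La$ onto a cocompact hyperbolic triangle group: arrange (if possible) that some $n_i\geq3$, send two of the elliptic generators to a generating pair of $\Delta(n_i,n_j,t)$ of the correct orders for $t$ large, send the rest to elements of the required orders, and compose with the smooth $\PSL(2,q)$-representations of unbounded order furnished by Theorem~\ref{thm:macbeath}; the residual configurations in which all $n_i=2$ (such as $\Z_2*\Z_2$ and $\Z_2*\Z_2*\Z_2$) are handled directly by mapping onto finite dihedral groups or products thereof generated by involutions. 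I expect the case-checking in this last step to be the most tedious part of the argument, but it introduces no idea beyond Theorems~\ref{thm:smooth_dihedral} and~\ref{thm:macbeath}.
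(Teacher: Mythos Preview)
Your overall strategy matches the paper's exactly: split on the sign of $\chi(\Ga)-\chi(\La)$, take a smooth quotient of whichever side wins the Betti-number race, and feed the resulting strict inequality into Theorem~\ref{thm:homologyTrick}. Case~I is identical to the paper's.

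The one point of divergence is how you manufacture arbitrarily large smooth quotients of $\La=(0;p;\n)$ in Case~II. The paper does this in a single stroke: since $p\geq1$ there is a parabolic generator $y_1$, and one applies Macbeath (Theorem~\ref{thm:macbeath}) to the augmented multiset $(L{+}1,n_1,\dots,n_k)$---which has at least three entries in every case because $p=1$ forces $k\geq2$---to obtain a smooth representation into $\PSL(2,q)$ sending $y_1$ to an element of order $L{+}1$, so that $|G|>L$. Your $p\geq2$ treatment via $\Z_N$ is fine, but your $p=1$ sketch is tangled: routing two elliptic generators through $\Delta(n_i,n_j,t)$ and then asking the remaining $x_l$ to hit ``elements of the required orders'' does not work as stated, since a hyperbolic triangle group $\Delta(n_i,n_j,t)$ need not contain elements of order $n_l$ for $l\neq i,j$. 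The case split on whether some $n_i\geq3$ and the separate handling of all-$2$ tuples are also unnecessary. Replacing that paragraph with the paper's one-line use of Macbeath on $(\n,L{+}1)$ closes the gap and eliminates the casework.
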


\begin{proof}
    We compare Euler characteristics for the following two cases.
    \begin{caseof}
        \case{$\chi(g;0;\m)\leq\chi(0;p;\n)$}{Suppose that $\chi(g;0;\m)\leq\chi(0;p;\n)$. Then for any integer $l>0$, we have $\chi(g;0;\m)<\chi(0;p;\n)+1/l$ so that $l\chi(g;0;\m)<l\chi(0;p;\n)+1$ and $2-l\chi(g;0;\m)>1-l\chi(0;p;\n)$.
        
        Let $l$ be the order of any \textit{smooth} finite quotient $G$ of $(g;0;\m)$, and fix $K_1$ to be the associated kernel. Observe that $\bet(K_1)=2-l\chi(g;0;\m)$. If $G$ also happens to be a quotient of $(0;p;\n)$, let $K_2$ be any associated kernel and observe that $\bet(K_2)\leq 1-l\chi(0;p;\n)$. Therefore $\bet(K_1)>\bet(K_2)$, so by Theorem \ref{thm:homologyTrick} we can appropriately extend $G$ to a finite quotient $Q$ of $(g;0;\m)$ but not for $(0;p;\n)$.}

        \case{$\chi(g;0;\m)>\chi(0;p;\n)$}{Now suppose that $\chi(g;0;\m)>\chi(0;p;\n)$, and set $L:=\lcm(\m,\n)$. Since Euler characteristics are rational numbers, we have $\chi(g;0;\m)-\chi(0;p;\n)\geq1/L$. Hence, for every integer $l>L$, we have $2-l\chi(g;0;\m)<1-l\chi(0;p;\n)$.
    
        Let $G$ be a \textit{smooth} quotient of $(0;p;\n)$ having order $l>L$. Such a quotient $G$ exists because, for example, we can smoothly map a standard parabolic generator of $(0;p;\n)$ to an element of order, say $L+1$, in some $\PSL(2,q)$. Similar to the argument from the previous case, for this case, the associated kernels arising from $G$ give the inequality $\bet(K_1)<\bet(K_2)$. Then by Theorem \ref{thm:homologyTrick}, we can appropriately extend $G$ to a finite quotient $Q$ of $(0;p;\n)$ but not for $(g;0;\m)$.}
    \end{caseof}
\end{proof}

The following theorem is the final piece needed for the effective algorithm.

\begin{thm}\label{thm:distinguishing_factors}
    Suppose $\Ga=(g_1;p_1;\m)$ and $\La=(g_2;p_2;\n)$ are non-isomorphic Fuchsian groups. Then there exist factors $\cvec\mid\m$ and $\dvec\mid\n$ and a finite group $Q$ such that either $Q$ is an outright quotient for exactly one of $\Ga$ and $\La$, or $Q$ is a $\cvec$-maximally smooth quotient of $\Ga$ and a $\dvec$-maximally smooth quotient of $\La$ satisfying
    \begin{enumerate}
        \item $\chi(\cvec)\neq\chi(\dvec)$, and
        \item if $\Ga=\De(\m)$ and $\La=\De(\n)$, then at least one of $\cvec,\dvec$ is good.
    \end{enumerate}
\end{thm}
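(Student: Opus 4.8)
The plan is to reduce to cases according to the puncture types of $\Ga$ and $\La$, and within each case split further by whether the abelianizations agree. The overarching strategy: either the abelianizations already differ (so $\Ga^{\Ab}$ itself is a finite quotient of one but not the other, and we are done with $Q$ an outright quotient), or they agree and we must manufacture a common quotient $G$ that sees differing degrees of smoothness, packaging the relevant data as factors $\cvec\mid\m$, $\dvec\mid\n$ with $\chi(\cvec)\neq\chi(\dvec)$, so that Theorem~\ref{thm:homologyTrick} (via its Corollary) applies.

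First I would handle the \emph{mixed puncture type} case, $\Ga=(g;0;\m)$ with $g>0$ and $\La=(0;p;\n)$ with $p>0$: this is exactly Proposition~\ref{prop:different_puncture_types}, which produces an outright quotient $Q$ for one but not the other, so nothing more is needed (the factors $\cvec,\dvec$ can be taken trivially and conclusion (2) is vacuous since not both groups are triangle groups in a way that matters — actually one should double-check the degenerate subcase where one of them \emph{is} a triangle group, but $g>0$ rules out $\Ga$ being a triangle group and $p>0$ rules out $\La$ being one). Next, the \emph{same puncture type} cases. If $\Ga^{\Ab}\not\cong\La^{\Ab}$, take $Q=\Ga^{\Ab}$ (finite iff $b_1=0$; if $b_1>0$ instead take a suitable finite abelian quotient distinguishing them, which exists by Proposition~\ref{prop:abelianization} since the invariant factors or the free rank must differ) — this is an outright quotient for one but not the other. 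So assume $\Ga^{\Ab}\cong\La^{\Ab}$. By Corollary~\ref{cor:abelianization} this forces $p_1=p_2$ (in the punctured case) or $g_1=g_2$ (in the compact case), and matching $\med_i$'s; since $\Ga\not\cong\La$, the multisets $\m,\n$ must genuinely differ, and hence by the contrapositive of the Euler-characteristic propositions in \S\ref{eulerChar} — combined with Corollary~\ref{cor:lessthan2} when $k\le 2$ — we get $\chi(\Ga)\neq\chi(\La)$, equivalently $\chi(\m)\neq\chi(\n)$ after normalizing lengths. Now I would invoke Theorem~\ref{thm:smooth_dihedral} (compact case, $g\ge1$) or a smooth $\PSL(2,q)$/parabolic construction (punctured case) to build a group $G$ that is $\m$-smooth for $\Ga$, i.e. $\cvec=\m$, and then \emph{also} a quotient of $\La$ — by mapping the extra elliptic/parabolic generators of $\La$ appropriately inside the same $G$ — but only $\dvec$-smooth for some proper factor $\dvec\mid\n$ with $\chi(\dvec)>\chi(\m)=\chi(\cvec)$; the Corollary to Theorem~\ref{thm:homologyTrick} then yields $Q$.

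The genuinely delicate case is the \emph{triangle group} case $\Ga=\De(\m)$, $\La=\De(\n)$, where condition (2) demands at least one of $\cvec,\dvec$ be good, because a bad factor $\De(\cvec)$ is not a Fuchsian group and Macbeath's theorem (Theorem~\ref{thm:macbeath}) requires $k\ge3$ good data to furnish a smooth $\PSL_2$ representation. Here I would argue as follows: since $\m\neq\n$ but $\med_i(\m)=\med_i(\n)$ for $i\le k-1$ (from Corollary~\ref{cor:abelianization}, using that triangle groups are compact with $g=0$), the multisets differ only in the top slot, i.e. $\lcm(\m)\neq\lcm(\n)$; WLOG $\chi(\m)<\chi(\n)$. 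I want $G\le\PSL(2,q)$ smooth for $\De(\m)$ (so $\cvec=\m$, which is good since $k\ge3$) and a quotient of $\De(\n)$ that is only $\dvec$-smooth for a good $\dvec\mid\n$ with $\chi(\dvec)>\chi(\m)$; choosing $q$ so that every $m_i$ divides $(q-1)/2$ via the unit-in-$\Z/2L\Z$ trick after Theorem~\ref{thm:macbeath}, the same $G=\PSL(2,q)$ will receive $\De(\n)$ with the top generator of $\n$ landing in an element whose order is a proper divisor of $n_k$, and one checks the resulting $\dvec$ can be arranged to be good (replace a would-be bad pair by adjusting which divisor is used, exploiting that $k\ge3$ gives room). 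The main obstacle I anticipate is precisely this last bookkeeping — controlling the degree of smoothness of the \emph{second} group $\La$ inside a $G$ tailored to the \emph{first} group $\Ga$, while simultaneously keeping the factor $\dvec$ good and keeping $\chi(\cvec)\neq\chi(\dvec)$ strict; this requires a careful case analysis on how the element orders in $G$ can divide the $n_i$, and is where I'd expect the argument to need the most care (likely an auxiliary lemma enumerating the possible $\dvec$ and ruling out the configurations where every available $\dvec$ is bad).
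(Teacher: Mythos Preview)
Your proposal contains a genuine gap in the key case where $\Ga$ and $\La$ have the same puncture type and isomorphic abelianizations with $k\geq 3$. You assert that ``by the contrapositive of the Euler-characteristic propositions in \S\ref{eulerChar}\ldots we get $\chi(\Ga)\neq\chi(\La)$,'' and in the triangle case you further claim ``the multisets differ only in the top slot, i.e.\ $\lcm(\m)\neq\lcm(\n)$.'' Both claims are false. The proposition in \S\ref{eulerChar} only says that \emph{if} the abelianizations and Euler characteristics agree \emph{then} the $\lcm$'s and products agree; its contrapositive does not let you conclude $\chi(\m)\neq\chi(\n)$ from $\m\neq\n$. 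Indeed Example~2 of the paper exhibits $\De(15,42,63)$ and $\De(21,21,90)$ with identical abelianization $\Z_3\times\Z_{21}$, identical Euler characteristic $-563/630$, identical $\lcm=630$, and identical product $39690$. Your plan of taking $\cvec=\m$ (a fully smooth quotient) and hoping $\La$ only achieves a strictly less smooth $\dvec$ collapses here: any $q$ with $m_i\mid(q-1)/2$ for all $i$ also has $n_i\mid(q-1)/2$, so $\De(\n)$ is fully $\n$-smooth in the same $\PSL(2,q)$, giving $\chi(\cvec)=\chi(\m)=\chi(\n)=\chi(\dvec)$.

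This is exactly why the paper builds the machinery of \emph{scrapes} $\m_s$ and their \emph{closures} $\ol{\m_s}$, and proves via a M\"obius-inversion/full-rank linear-algebra argument (Theorems~\ref{thm:distinguishing_scrapes} and~\ref{thm:distinguishing_scrape_closures}) that some $s\mid M$ must satisfy $\chi(\ol{\m_s})\neq\chi(\ol{\n_s})$. The goodness requirement (your condition~(2)) is then handled by Theorem~\ref{thm:good_distinguishing_scrapeClosures}, and the existence of a $\PSL(2,q)$ realizing $\ol{\m_s}$ as the \emph{maximal} smoothness---not merely some smoothness---requires the delicate congruence/incongruence system of Lemma~\ref{lem:incongruences} and Theorem~\ref{thm:maximal_reps}. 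The ``bookkeeping'' you anticipate is not a tail-end case analysis but the entire substance of Section~4; without it there is no mechanism to separate $\m$ from $\n$ when all the obvious invariants coincide.
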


The proof of Theorem \ref{thm:distinguishing_factors} is devoted for the next section, whose aim is to produce the necessary factors $\cvec,\dvec$.

\section{Scrapes and smoothness}

Let $\m=(m_1,\dots,m_k)$ with $k\geq1$, not containing $\infty$, and set $M:=\lcm(\m)$. We will define a \textit{scrape} of $\m$, which is a factor of $\m$ maximal with respect to a certain property. Scrapes are \textit{almost} the correct notion of the factors needed in Theorem \ref{thm:distinguishing_factors}; however, when $2$ or $3$ divides $M$, we often will require a slightly stronger notion of the \textit{closure} of a scrape. In this section, we will denote multisets with condensed notation either by $$\ds \m=\bigoplus_{i=1}^{k}m_i,\text{ or by }\ds \m=\bigoplus_{d\in\m}d.$$ 

\begin{defn}\mbox{}
    \begin{enumerate}
        \item Given $s,d\mid M$, define $d_s:=\gcd(d,M/s)$, called the \textit{$s$-scrape} of an element $d$.
    
        \item The \emph{$s$-scrape} of $\m$ is given by element-wise scrapes $\ds \m_s:=\bigoplus_{d\in\m}d_s=\bigoplus_{d\in\m}\gcd(d,M/s)$. This is equivalent to having $\ds{\m_s=\max_{\cvec\mid\m}\{\lcm(\cvec)=M/s\}}$, i.e., if $\cvec\mid\m$ such that $\lcm(\cvec)=M/s$, then $\cvec\mid\m_s$. For the contravariant version of a scrape, we define $\ds \m^t:=\m_{M/t}=\max_{\x\mid\m}\{\lcm(\x)=t\}$.
        
        \item  We say that a factor $\cvec\mid\m$ is a \emph{scrape} of $\m$, if there is some $s\mid M$ such that $\cvec=\m_s$.
    \end{enumerate}
\end{defn}

The following properties of scrapes are straightforward to verify.
\begin{prop}\label{prop:scrape_properties}\mbox{}
    \begin{enumerate}
        \item  There is an order-preserving bijection between the scrapes of $\m$ and the factors of $M$, via the $\lcm$ operator. In particular, $\cvec\mid\dvec\iff \lcm(\cvec)\mid\lcm(\dvec)$.
        \item Incremental scrapes: Suppose $\ell\mid s$ and $\ell$ is prime. If we set $\m_s=(s_1,\dots,s_k)$, then $\m_{s\ell}:=(x_1,\dots,x_k)$, where $$x_i=\begin{cases}
            s_i &\text{if } v_{\ell}(s_i)<v_{\ell}(s)\\
            s_i/\ell &\text{if } v_{\ell}(s_i)=v_{\ell}(s).
        \end{cases}$$
        \item Reversed increment: Suppose $s\ell\mid M$, for a prime $\ell$. If we set $\m_s=(s_1,\dots,s_k)$, then $\m_{s/\ell}:=(y_1,\dots,y_k)$, where $$y_i=\begin{cases}
            s_i &\text{if } v_{\ell}(s_i)=v_{\ell}(m_i)\\
            s_i\ell &\text{if } v_{\ell}(s_i)<v_{\ell}(m_i).
        \end{cases}$$
        \item Transitivity of scrapes: Suppose $st\mid M$, then $(\m_s)_t=(\m_t)_s=\m_{st}$.
        \item Scrapes preserve $\med_i$'s: Suppose $|\m|=|\n|$ and $\med_i(\m)=\med_i(\n)$, for each $1\leq i\leq k$. Then for each $s\mid M$, we have $\med_i(\m_s)=\med_i(\n_s)$.
    \end{enumerate}
\end{prop}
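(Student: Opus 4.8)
The plan is to reduce every item to a prime-by-prime statement about $\ell$-adic valuations, using throughout the identity $v_\ell(d_s)=\min\{v_\ell(d),v_\ell(M/s)\}$ --- valid for any divisors $s,d$ of $M$ and any prime $\ell$ --- together with the observation that $v_\ell(\med_i(\m))$ is, by definition, the $i$-th smallest entry of the list $v_\ell(m_1),\dots,v_\ell(m_k)$. None of the five items is deep; the real work is organizational, namely keeping track of which quantities divide which so that every scrape written down is actually defined, and putting the boundary of each case split on the correct side.

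For item 1, I would first check that $\lcm(\m_s)=M/s$ for every $s\mid M$: at each prime $\ell$ pick $j$ with $v_\ell(m_j)=v_\ell(M)$, which exists because $\lcm(\m)=M$; then $v_\ell(\gcd(m_j,M/s))=\min\{v_\ell(M),v_\ell(M/s)\}=v_\ell(M/s)$, while no entry of $\m_s$ has $\ell$-valuation exceeding $v_\ell(M/s)$, so $v_\ell(\lcm(\m_s))=v_\ell(M/s)$. Hence $\cvec\mapsto\lcm(\cvec)$ sends the scrapes of $\m$ into the divisors of $M$; it is onto because for $t\mid M$ the scrape $\m_{M/t}$ has $\lcm$ equal to $t$, and one-to-one because $\m_s=\m_{s'}$ forces $M/s=M/s'$, hence $s=s'$. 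For the order assertion, $\cvec\mid\dvec$ trivially gives $\lcm(\cvec)\mid\lcm(\dvec)$; conversely, writing $\cvec=\m_s$ and $\dvec=\m_{s'}$, the divisibility $M/s\mid M/s'$ is equivalent to $s'\mid s$, whence $\gcd(m_i,M/s)\mid\gcd(m_i,M/s')$ entrywise, i.e.\ $\cvec\mid\dvec$.

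Items 2, 3, and 4 are then pure valuation bookkeeping. For the incremental formulas one compares $\min\{v_\ell(m_i),v_\ell(M/s)\}$ with $\min\{v_\ell(m_i),v_\ell(M/s)\mp1\}$: away from $\ell$ nothing changes, and at $\ell$ the value drops (resp.\ rises) by exactly one precisely when $v_\ell(m_i)$ reaches (resp.\ strictly exceeds) the exponent $v_\ell(M/s)=v_\ell(\lcm(\m_s))$ --- which is exactly the displayed case split. Transitivity follows from the elementary fact that $\gcd(\gcd(a,b),c)=\gcd(a,c)$ whenever $c\mid b$: since $st\mid M$ gives $M/(st)\mid M/s$, we get $(\m_s)_t=\bigoplus_i\gcd(\gcd(m_i,M/s),M/(st))=\bigoplus_i\gcd(m_i,M/(st))=\m_{st}$, and symmetrically $(\m_t)_s=\m_{st}$; the hypothesis $st\mid M$ is precisely what makes all three scrapes legitimate.

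Finally, for item 5 I would note that $\med_k(\m)=\med_k(\n)$ forces $\lcm(\m)=\lcm(\n)=M$, so $\m_s$ and $\n_s$ are defined together for $s\mid M$. Fix a prime $\ell$ and set $c:=v_\ell(M/s)$. Scraping by $s$ replaces the valuation multiset $v_\ell(m_1),\dots,v_\ell(m_k)$ by $\min\{v_\ell(m_1),c\},\dots,\min\{v_\ell(m_k),c\}$; since $x\mapsto\min\{x,c\}$ is nondecreasing, capping commutes with sorting, so the $i$-th smallest $\ell$-valuation of $\m_s$ is $\min\{\alpha_{\ell,i},c\}$, where $\alpha_{\ell,i}$ denotes the $i$-th smallest $\ell$-valuation of $\m$. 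The equalities $\med_i(\m)=\med_i(\n)$ say that these numbers $\alpha_{\ell,i}$ agree for $\m$ and $\n$ at every $\ell$ and $i$, and $c$ depends only on $M$; hence $v_\ell(\med_i(\m_s))=v_\ell(\med_i(\n_s))$ for all $i$ and $\ell$, which is the claim. I expect the only real obstacle to be the side-condition bookkeeping flagged above rather than any of the arithmetic.
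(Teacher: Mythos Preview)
Your proposal is correct. The paper gives no proof of this proposition at all --- it is introduced with the sentence ``The following properties of scrapes are straightforward to verify'' and left at that --- so there is no argument to compare against; your prime-by-prime reduction via $v_\ell(d_s)=\min\{v_\ell(d),\,v_\ell(M/s)\}$, together with the observation in item~5 that the nondecreasing cap $x\mapsto\min\{x,c\}$ commutes with sorting, is exactly the natural verification and presumably what the authors had in mind.

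One minor remark on items 2--3. You phrase the case split as ``$v_\ell(s_i)$ equals or is strictly below $v_\ell(M/s)=v_\ell(\lcm(\m_s))$,'' and then assert this ``is exactly the displayed case split.'' In fact the paper's printed split compares $v_\ell(s_i)$ to $v_\ell(s)$, not to $v_\ell(M/s)$, and a small example (say $M=24$, $\m=(8,6,24)$, $s=\ell=2$, so $\m_s=(4,6,12)$ with $v_2(s_1)=2>1=v_2(s)$) shows that the printed split does not cover all cases, whereas yours does. So your argument is the right one; just be aware that the ``displayed case split'' appears to contain a typo, and your sentence should perhaps read ``which is the intended case split'' rather than ``exactly the displayed'' one.
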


We now prove that scrapes are semi-multiplicative functions.

\begin{propn}\label{prop:scrapes_are_semimultiplicative}\mbox{}
    \begin{enumerate}
        \item Suppose $ab\mid M$ and $\gcd(a,b)=1$. Then for each $d\mid M$, we have $d\cdot d_{ab}=d_a d_b$.
        
        \item Suppose $a_1a_2\dots a_l\mid M$ and $\gcd(a_i,a_j)=1$, for each $i\neq j$. Then for every $d\mid M$, we have $$d^{l-1}\cdot d_{a_1a_2\dots a_l}=d_{a_1}d_{a_2}\dots d_{a_l}.$$
    \end{enumerate}
\end{propn}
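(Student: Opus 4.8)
The plan is to verify both identities one prime at a time, using that a positive integer is determined by its $\ell$-adic valuations and that $\gcd$'s and products translate into minima and sums of valuations. Fix a prime $\ell$ and write $\mu:=v_{\ell}(M)$ and $\delta:=v_{\ell}(d)$; since $d\mid M$ we have $0\leq\delta\leq\mu$. For any divisor $s\mid M$ one has $v_{\ell}(M/s)=\mu-v_{\ell}(s)$, and hence $v_{\ell}(d_s)=v_{\ell}(\gcd(d,M/s))=\min\{\delta,\ \mu-v_{\ell}(s)\}$. Everything reduces to elementary manipulations of this formula.

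For statement (1), fix $\ell$ and set $\alpha:=v_{\ell}(a)$ and $\beta:=v_{\ell}(b)$. The hypothesis $\gcd(a,b)=1$ forces $\min\{\alpha,\beta\}=0$, since no prime divides both $a$ and $b$; by symmetry we may assume $\beta=0$. Then $v_{\ell}(d_b)=\min\{\delta,\mu\}=\delta$, while $v_{\ell}(d_a)=\min\{\delta,\ \mu-\alpha\}$ and $v_{\ell}(d_{ab})=\min\{\delta,\ \mu-\alpha-\beta\}=\min\{\delta,\ \mu-\alpha\}$. Therefore $v_{\ell}(d\cdot d_{ab})=\delta+\min\{\delta,\ \mu-\alpha\}=v_{\ell}(d_a d_b)$. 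As this holds for every prime $\ell$, we conclude $d\cdot d_{ab}=d_a d_b$.

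For statement (2), I would induct on $l$, the cases $l=1$ being trivial and $l=2$ being statement (1). Given pairwise coprime $a_1,\dots,a_l$ with product dividing $M$, apply statement (1) to the coprime pair $(a_1\cdots a_{l-1},\ a_l)$ to get $d\cdot d_{a_1\cdots a_l}=d_{a_1\cdots a_{l-1}}\cdot d_{a_l}$, then multiply this by the inductive identity $d^{\,l-2}\cdot d_{a_1\cdots a_{l-1}}=d_{a_1}\cdots d_{a_{l-1}}$ and cancel. Alternatively one can redo the prime-by-prime count directly: for a fixed prime $\ell$, pairwise coprimality means at most one $a_i$ is divisible by $\ell$ (say $a_1$, with $v_{\ell}(a_1)=\alpha\geq 0$), so $v_{\ell}(d_{a_i})=\min\{\delta,\mu\}=\delta$ for the $l-1$ indices $i\geq 2$, while $v_{\ell}(d_{a_1})=\min\{\delta,\ \mu-\alpha\}=v_{\ell}(d_{a_1\cdots a_l})$; hence both sides have $\ell$-adic valuation $(l-1)\delta+\min\{\delta,\ \mu-\alpha\}$.

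There is no substantive obstacle here; the only point requiring care is the invocation of coprimality, which is precisely what guarantees that at each prime at most one of the $a_i$ contributes, so that the "surplus" copies of $d$ on the left-hand side are exactly accounted for by those factors $d_{a_i}$ that coincide with $d$ at that prime.
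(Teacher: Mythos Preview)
Your argument is correct. The valuation approach is clean: once you note that $v_\ell(d_s)=\min\{\delta,\mu-v_\ell(s)\}$ and that coprimality forces at most one of the $a_i$ to carry any $\ell$-power, both identities drop out immediately. The induction for (2) is fine, and the alternative direct prime-by-prime count you sketch is equally valid.

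The paper takes a different route for (1): it works entirely with $\gcd$ identities, never decomposing into primes. It first observes that $\gcd(M/a,M/b)=M/(ab)$ when $\gcd(a,b)=1$, and then expands $d_a d_b=\gcd(d,M/a)\gcd(d,M/b)$ as $\gcd(d^2,\,dM/a,\,dM/b,\,M^2/(ab))$, simplifies the middle two terms to $dM/(ab)$, drops the redundant $M^2/(ab)$ (since $dM/(ab)$ divides it), and factors out $d$ to arrive at $d\cdot d_{ab}$. Statement (2) is then handled by the same induction you use. Your method is arguably more transparent about \emph{why} coprimality is exactly the right hypothesis (it localizes the obstruction to a single prime), while the paper's method has the minor aesthetic advantage of avoiding any reference to prime factorizations. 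Neither approach is deeper than the other; they are both elementary and of comparable length.
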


\begin{proof}
    The first statement follows since notice $$(ab)\cdot\gcd(M/a,M/b)=\gcd(bM,\ aM)=M\cdot\gcd(a,b)=M,\text{ which implies }\gcd(M/a,M/b)=M/(ab).$$ Hence, we have \begin{align*}
        d_ad_b=\gcd(d,M/a)\cdot\gcd(d,M/b) &= \gcd(d^2,\ dM/a,\ dM/b,\ M^2/(ab))\\
        &= \gcd(d^2,\ dM/(ab),\ M^2/(ab))\\
        &= \gcd(d^2,\ dM/(ab))\\
        &= d\cdot\gcd(d,\ M/(ab))\\
        &= d\cdot d_{ab}.
    \end{align*} The second statement follows inductively.
\end{proof}

We now define the \textit{closure} of a factor $\cvec\mid\m$; however, we must fix an ordering for the elements in $\cvec$ relative to $\m$. A scrape $\m_s\mid\m$, in particular, already comes with such a fixed ordering.

\begin{defn}\label{def:closure}
    Let $\cvec\mid\m$ be a factor, where the elements of $\cvec=(c_1,\dots,c_k)$ and $\m=(m_1,\dots,m_k)$ are ordered in such a way so that $c_i\mid m_i$. With respect to this chosen order, define the \textit{closure} $\ol{\cvec}$ of $\cvec$, where the elements $c_i'$ of $\ol{\cvec}=(c_1',\dots,c_k')$ are given by $$c_i':=\begin{cases}
        2 & \text{if $c_i=1$ and $2\mid m_i$, $3\nmid m_i$}\\
        3 & \text{if $c_i=1$ and $3\mid m_i$}\\
        3 & \text{if $c_i=2$ and $2,3\mid m_i$}\\
        c_i & \text{otherwise}.
    \end{cases}$$
\end{defn}

 The closure $\ol{\cvec}$ is a factor of $\m$ and is non-trivial only when $2\mid M$ or $3\mid M$. Observe that $\chi(\ol{\cvec})\leq\chi(\cvec)$. The main purpose of the closure $\ol{\cvec}$ is for results involving maximally smooth representations to $\PSL(2,q)$.

\begin{propn}
    If $q>1$ is an odd prime power with a $\cvec$-smooth representation $\vp\colon\Delta(\m)\to\PSL(2,q)$, then there is also some $\ol{\cvec}$-smooth representation $\vp'\colon\Delta(\m)\to\PSL(2,q)$. In particular, if $\ol{\cvec}\neq\cvec$, then there can never be a $\cvec$-maximally smooth representation to $\PSL(2,q)$.
\end{propn}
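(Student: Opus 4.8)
The plan is to reduce the proposition to Theorem~\ref{thm:macbeath}. By construction the closure only ever enlarges entries, so $c_i'\mid m_i$ for every $i$, and hence $x_i\mapsto z_i$ defines canonical surjections $\De(\m)\twoheadrightarrow\De(\cvec)$ and $\De(\m)\twoheadrightarrow\De(\ol{\cvec})$. A $\cvec$-smooth homomorphism $\De(\m)\to\PSL(2,q)$ is the same thing as a smooth homomorphism $\De(\cvec)\to\PSL(2,q)$ precomposed with the first surjection, and conversely any smooth homomorphism $\De(\ol{\cvec})\to\PSL(2,q)$ pulls back along the second surjection to an $\ol{\cvec}$-smooth homomorphism $\De(\m)\to\PSL(2,q)$. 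So it is enough to build a smooth representation of $\De(\ol{\cvec})$ into $\PSL(2,q)$ out of one for $\De(\cvec)$ (deleting unit entries so the signatures are genuine; if $\ol{\cvec}=\cvec$ there is nothing to prove, and the degenerate cases with fewer than three nonunit entries are checked directly).

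Write $q=\ell^e$. The given $\cvec$-smooth representation shows that each $c_i$ is the order of an element of $\PSL(2,q)$, hence --- as recorded in Theorem~\ref{thm:macbeath} --- that $c_i$ divides one of $\ell$, $(q-1)/2$, or $(q+1)/2$. The only values appearing in $\ol{\cvec}$ but not already in $\cvec$ are $2$ and $3$, so I just need these to divide one of $\ell$, $(q-1)/2$, $(q+1)/2$ as well. Since $q$ is odd, $(q-1)/2$ and $(q+1)/2$ are consecutive integers, so $2$ divides whichever of them is even. For $3$: if $\ell=3$ then $3\mid\ell$; otherwise $q\equiv\pm1\pmod 3$, so $3$ divides $q-1$ or $q+1$, and since $3\nmid 2$ it then divides $(q-1)/2$ or $(q+1)/2$ accordingly. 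Thus every entry of $\ol{\cvec}$ divides the corresponding $m_i$ and also divides one of $\ell$, $(q-1)/2$, $(q+1)/2$, so Theorem~\ref{thm:macbeath} produces a smooth representation $\De(\ol{\cvec})\to\PSL(2,q)$, and composing with $\De(\m)\twoheadrightarrow\De(\ol{\cvec})$ yields the required $\vp'$.

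For the last assertion, I would upgrade the inequality $\chi(\ol{\cvec})\le\chi(\cvec)$ noted before the proposition to a strict one whenever $\ol{\cvec}\neq\cvec$: in that case some entry jumps from $1$ to $2$ or $3$, or from $2$ to $3$, which strictly increases $\sum_i(1-1/c_i')$ and so strictly decreases the Euler characteristic. Consequently a $\cvec$-smooth representation onto $\PSL(2,q)$ with $\ol{\cvec}\neq\cvec$ can never be $\cvec$-maximally smooth, since the $\ol{\cvec}$-smooth representation just constructed --- which one arranges to surject onto $\PSL(2,q)$ as well --- realizes a strictly smaller value of $\chi$ among the factors along which $\PSL(2,q)$ is smooth. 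I expect the main friction to be bookkeeping rather than ideas: handling unit entries against the hypotheses of Theorem~\ref{thm:macbeath}, and, for the last assertion, keeping the improved representation surjective onto $\PSL(2,q)$ rather than onto a proper subgroup --- which is exactly where one uses that the target is $\PSL(2,q)$ itself rather than an arbitrary finite group. The short arithmetic that $2$ and $3$ occur as element orders in every $\PSL(2,q)$ with $q$ odd is the only genuinely new input.
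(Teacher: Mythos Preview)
Your proof is correct and follows the same approach as the paper, which simply notes that $\chi(\ol{\cvec})<\chi(\cvec)$ whenever $\ol{\cvec}\neq\cvec$ and appeals to Theorem~\ref{thm:macbeath} together with the fact that elements of orders $2$ and $3$ exist in every $\PSL(2,q)$ for odd $q$. Your closing worry about surjectivity is unnecessary: the proposition concerns representations \emph{into} $\PSL(2,q)$ rather than onto it, so the $\ol{\cvec}$-smooth map furnished by Macbeath's theorem already witnesses that $\cvec$ does not minimize $\chi$, with no need to arrange that the image be all of $\PSL(2,q)$.
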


\begin{proof}
    If $\ol{\cvec}\neq\cvec$, then $\chi(\ol{\cvec})<\chi(\cvec)$. The result follows from Theorem \ref{thm:macbeath} along with the observation that elements of orders 2 and 3 exist in every $\PSL(2,q)$.
\end{proof}

The next two subsections aim to prove the following theorem.

\begin{thm}\label{thm:distinguishing_scrape_closures}
    Let $\m\neq\n$ have the same length, neither of which contains 1 nor $\infty$. Suppose $\De(\m)^{\Ab}\cong\De(\n)^{\Ab}$, and $\chi(\m)=\chi(\n)$. Set $M:=\lcm(\m)=\lcm(\n)$, then there exists $s\mid M$ such that $\chi(\ol{\m_s})\neq\chi(\ol{\n_s})$.
\end{thm}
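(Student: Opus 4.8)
The plan is to exploit the fact that $\m$ and $\n$ are distinct multisets having the same $\med_i$-data and the same sum of reciprocals, and to find, prime-by-prime, a scrape $s$ that witnesses a disagreement between the closures $\ol{\m_s}$ and $\ol{\n_s}$. Since $\med_i(\m)=\med_i(\n)$ for all $i$ by Corollary \ref{cor:abelianization}, the multisets $\m$ and $\n$ have the same multiset of prime-power valuations $\{v_\ell(m_1),\dots,v_\ell(m_k)\}$ for each prime $\ell$; the only way $\m\neq\n$ is that these valuations are \emph{matched up differently across different primes} — i.e., there is no single permutation simultaneously aligning all primes. I would first record this reformulation precisely, and note that by Proposition \ref{prop:scrape_properties}(5), scrapes preserve the $\med_i$'s, so $\med_i(\m_s)=\med_i(\n_s)$ for every $s\mid M$; in particular, if ever $\chi(\m_s)\neq\chi(\n_s)$ we are already done (taking that $s$ and using $\chi(\ol{\cvec})\le\chi(\cvec)$ with care — see below). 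So the interesting case is when $\chi(\m_s)=\chi(\n_s)$ for \emph{all} $s\mid M$, and we must force a discrepancy only after applying the closure operator.

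**Next I would** set up the key dichotomy. Consider the ``finest nontrivial'' scrapes: for a prime $\ell\mid M$, let $s=\ell$, so $\m_\ell$ divides out one factor of $\ell$ from each entry attaining $v_\ell=v_\ell(M)$ (Proposition \ref{prop:scrape_properties}(2)), and iterate. More usefully, I would consider scrapes of the form $s=M/t$ where $t$ ranges over divisors, i.e., the contravariant scrapes $\m^t$, which are the maximal factors with $\lcm=t$; these let me ``zoom in'' on a single prime $\ell$ by taking $t=\ell^a$ for various $a$. The closure operator only does something when an entry of the scrape lands on $1$ (with $2$ or $3$ dividing the corresponding $m_i$) or on $2$ (with $6\mid m_i$). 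So the strategy is: pick a prime $\ell\in\{2,3\}$ dividing $M$ (if neither $2$ nor $3$ divides $M$, the closure is trivial and $\chi(\ol{\m_s})=\chi(\m_s)$, $\chi(\ol{\n_s})=\chi(\n_s)$, reducing to the already-handled case via the previous proposition in the excerpt — here I'd note that Corollary \ref{cor:lessthan2} handles $|\m|\le 2$ and for longer multisets the valuation-mismatch must show up at \emph{some} prime $\ell$, and if that $\ell\notin\{2,3\}$ then... actually I need to be more careful — see the obstacle). Assuming $\ell\in\{2,3\}$, I would choose $t=\ell^{a}$ where $a$ is chosen so that scraping down to $\lcm=t$ sends precisely the entries $m_i$ with small $\ell$-valuation to $1$, and crucially \emph{the set of such $i$ differs between $\m$ and $\n$} because the valuations are attached to different entries — then closure bumps a different number of these $1$'s up to $2$ or $3$ (depending on whether $2$ or $3$ also divides the ambient $m_i$), changing $\chi(\ol{\m^t})$ and $\chi(\ol{\n^t})$ by different amounts.

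**The main obstacle** — and where I expect the real work to be — is that the closure correction is not a clean function of the valuations at the single prime $\ell$; it depends on whether $2\mid m_i$ \emph{and} $3\mid m_i$, i.e., on the valuations at \emph{both} $2$ and $3$ of the original entry $m_i$. Because $\med_i(\m)=\med_i(\n)$ only controls the valuations one prime at a time, it does \emph{not} control the joint distribution of $(v_2,v_3)$ across entries, and this joint mismatch is exactly what both causes $\m\neq\n$ and what makes the closure contributions potentially conspire to remain equal. So I would handle $2$ and $3$ more symmetrically: use the contravariant scrape $\m^t$ with $t$ a product $2^{a}3^{b}$, and track the vector of ``closure increments'' as a function of $(a,b)$. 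The cleanest route is probably a double-counting / generating-function argument: encode, for each entry $m_i$, the pair $(\min(v_2(m_i),a),\min(v_3(m_i),b))$ and the resulting closure value, sum the reciprocals, and observe that if $\chi(\ol{\m^t})=\chi(\ol{\n^t})$ for \emph{all} $t\mid M$ then a suitable Möbius-type inversion forces the full joint distribution of $(v_2(m_i),v_3(m_i),\text{rest})$ to agree between $\m$ and $\n$ — combined with $\med_i$ equality at the other primes, this would force $\m=\n$, contradicting the hypothesis. I would also need to double-check the edge case where scraping makes an entry become $1$ but $\gcd$ of that entry with $M$ already forced triviality — and the interaction with $\chi(\ol{\cvec})\le\chi(\cvec)$ being only an inequality, so that ``$\chi(\m_s)\neq\chi(\n_s)$'' does not immediately give ``$\chi(\ol{\m_s})\neq\chi(\ol{\n_s})$'' for the \emph{same} $s$ — one may have to re-scrape slightly. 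Modulo these bookkeeping subtleties, the skeleton is: reduce to $2$ or $3$ dividing $M$; parametrize scrapes by divisors $t=2^a3^b$; express the closures' Euler characteristics as explicit sums over entries; and invert to conclude that equality for all such $t$ would force $\m=\n$.
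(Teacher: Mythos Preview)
Your instinct to argue by contrapositive and to invoke a M\"obius-type inversion is exactly what the paper does, but the execution in your sketch has a structural gap. Your case split is backwards: the ``interesting case'' you isolate---$\chi(\m_s)=\chi(\n_s)$ for \emph{all} $s\mid M$---is in fact empty, since by Theorem~\ref{thm:distinguishing_scrapes} (the unclosured version, proved just before this theorem) that hypothesis already forces $\m=\n$. So the entire content of Theorem~\ref{thm:distinguishing_scrape_closures} lies in what you call the easy case, and there your claim ``if $\chi(\m_s)\neq\chi(\n_s)$ we are already done'' is simply false: the closure operator can (and does) erase the inequality, and recovering it at some other $s$ is the whole problem, not a bookkeeping subtlety. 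Your proposed remedy---parametrize scrapes by $t=2^a3^b$ and invert---cannot work as stated: there are only $(v_2(M)+1)(v_3(M)+1)$ such $t$, far fewer than the number of unknowns $\De_d$ (one for each $d\mid M$), so no inversion over that restricted family can force all $\De_d=0$.

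What the paper actually does is treat the full collection of equations $\chi(\ol{\m_s})=\chi(\ol{\n_s})$, one for every $s\mid M$, as a single homogeneous linear system in the variables $\De_d=A_d-B_d$, with coefficient matrix $F=(1/d_s')$. It then compares $F$, after the same M\"obius row operations used for Theorem~\ref{thm:distinguishing_scrapes}, entrywise to the full-rank matrix $X$ arising from the unclosured system: the difference $x_{s,d}-y_{s,d}$ is computed explicitly and shown to kill pivots in at most three columns, namely $d\in\{2,3,12\}$. The deficit is then patched not by further scraping but by appending three \emph{additional} linear equations that you never invoke: two coming from the abelianization hypothesis ($\sum_{2\,\|\,d}\De_d=0$ and $\sum_{3\,\|\,d}\De_d=0$) and one from equal length ($\sum_d\De_d=0$). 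These are precisely the extra constraints needed to restore full rank, and without them the system genuinely has nontrivial kernel. Your sketch gestures at using $\med_i$-equality ``at the other primes,'' but the decisive use is at $2$ and $3$ themselves, and the equal-length equation is needed for the $d=12$ column---none of which your plan isolates.
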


\subsection{Euler characteristics and homogeneous linear systems}

Suppose $\m\neq\n$ have the same length, neither of which contains 1 nor $\infty$, and suppose $\De(\m)^{\Ab}\cong\De(\n)^{\Ab}$, $\chi(\m)=\chi(\n)$, and set $M:=\lcm(\m)=\lcm(\n)$. There is no ambiguity in the definition of an \textit{$s$-scrape} of $d\in\m\cup\n$, i.e., $d_s:=\gcd(d,M/s)$. Before we prove Theorem \ref{thm:distinguishing_scrape_closures}, we will prove a weaker statement, one which does not take closures into account.

\begin{thm}\label{thm:distinguishing_scrapes}
    There exists $s\mid M$ such that $\chi(\m_s)\neq\chi(\n_s)$.
\end{thm}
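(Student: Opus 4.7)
The plan is to argue by contradiction: assume $\chi(\m_s)=\chi(\n_s)$ for every $s\mid M$, and deduce $\m=\n$, contradicting the hypothesis.

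Since scraping preserves multiset cardinality, the identity $\chi(\m_s)=\chi(\n_s)$ is equivalent to $\sum_{d\in\m}1/\gcd(d,M/s)=\sum_{d\in\n}1/\gcd(d,M/s)$. Reparametrizing by $t:=M/s$ (a bijection on divisors of $M$) and letting $a_c,b_c$ denote the multiplicities of $c$ in $\m,\n$ respectively, the hypothesis becomes the homogeneous linear system
$$\sum_{c\mid M}(a_c-b_c)\cdot\frac{1}{\gcd(c,t)}=0\quad\text{for every }t\mid M.$$
The goal is to show $a_c=b_c$ for every $c\mid M$.

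Next, I would invert this system using Möbius machinery on the divisor lattice of $M$. Let $g\colon\{e\mid M\}\to\Q$ be the unique function satisfying $\sum_{e\mid x}g(e)=1/x$; a direct Möbius-inversion computation together with multiplicativity gives $g(1)=1$ and $g(p^a)=(1-p)/p^a$ at each prime power, so $g$ is nonzero on every divisor of $M$. Substituting $1/\gcd(c,t)=\sum_{e\mid\gcd(c,t)}g(e)$ and swapping the order of summation, the system rewrites as
$$\sum_{e\mid t}g(e)\,D(e)=0\quad\text{for every }t\mid M,\quad\text{where }D(e):=\sum_{c\colon\,e\mid c}(a_c-b_c).$$
A straightforward induction on the number of prime factors of $t$, starting at $t=1$ and using $g(e)\neq 0$, forces $D(e)=0$ for every $e\mid M$. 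A final dual Möbius inversion on the divisibility poset then yields $a_c-b_c=\sum_{c\mid e}\mu(e/c)D(e)=0$, contradicting $\m\neq\n$.

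The crux is the non-vanishing of $g$ on every divisor of $M$, since the entire inversion argument collapses without it; fortunately, this reduces to checking $g(p^a)\neq 0$ at prime powers, which is automatic because every prime is at least $2$. Equivalently, the argument establishes that the Smith-type matrix $\bigl(1/\gcd(c,t)\bigr)_{c,t\mid M}$ is nonsingular, but the Möbius route above avoids any explicit determinant computation. No part of the proof actually uses the abelianization hypothesis or even $\chi(\m)=\chi(\n)$ beyond the $s=1$ case, so the inversion argument yields the stronger statement that any two distinct multisets of divisors of $M$ with the same cardinality must be separated by some scrape's Euler characteristic.
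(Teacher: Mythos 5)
Your proof is correct and takes a genuinely different route from the paper. You reparametrize by $t=M/s$ to make the coefficient matrix symmetric in $(c,t)$, then factor $1/n=\sum_{e\mid n}g(e)$ with $g$ the Möbius-inverted function $g(p^a)=(1-p)/p^a$, so that $1/\gcd(c,t)=\sum_{e\mid c,\ e\mid t}g(e)$; this diagonalizes the system at once (the nonsingularity is precisely a Smith-determinant-type identity $\det(f(\gcd(c,t)))_{c,t\mid M}=\prod_{d\mid M}g(d)$, with $f(n)=1/n$), and the conclusion $D(e)=0$ for all $e$ then falls out of one induction up the divisor poset using $g(e)\neq 0$. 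The paper instead row-reduces $E=(1/d_s)$ to an explicit upper-triangular matrix $X$, computes the entries by Möbius inversion as $x_{s,d}=\phi(d/d_s)/d$ when the pivot $\tilde{s}$ divides $d$ and $0$ otherwise, and reads off full rank from the pivots. Your argument is cleaner and more conceptual for the present theorem, and your closing observation is accurate: the abelianization and Euler-characteristic hypotheses enter only to guarantee $\lcm(\m)=\lcm(\n)$ so that the scrapes are taken relative to a common $M$. But the paper's explicit row-reduced form is not gratuitous: the very next result (Theorem \ref{thm:distinguishing_scrape_closures}) perturbs $E$ to the closure matrix $F$, and the authors need the concrete entries $\phi(d/d_s)/d$ to locate exactly which pivots degenerate (columns $d=2,3,12$) and to patch them. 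Your diagonalization would prove nonsingularity of $E$ but would not by itself track how the closure operation destroys rank, so the paper's heavier computation earns its keep downstream.
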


We used the computational algebra system GAP \cite{gap} to help facilitate our findings.

The rest of this subsection is for proving the contrapositive of Theorem \ref{thm:distinguishing_scrapes}. We start by assuming that $\chi(\m_s)=\chi(\n_s)$ for every $s\mid M$. We set $$\m:=\bigoplus_{d\mid M}d^{(A_d)}\text{ and }\n:=\bigoplus_{d\mid M}d^{(B_d)},$$ where $a^{(i)}$ denotes the cone order $a$ repeated $i$ times. It remains to show that $\m=\n$, which is equivalent to proving that $\De_d:=A_d-B_d=0$ for each $d\mid M$. Observe for each $s\mid M$, \begin{equation}
    \chi(\m_s)=\chi(\n_s) \iff \sum_{d\mid M}\frac{A_d}{d_s}=\sum_{d\mid M}\frac{B_d}{d_s} \iff \sum_{d\mid M}\frac{1}{d_s}\De_d=0,\label{eqn:homogeneous_eqn}
\end{equation} which is a homogeneous system of linear equations in the variables $\De_d$, for $d\mid M$. Let $E=(e_{s,d})$ be the square matrix representing (\ref{eqn:homogeneous_eqn}), with rows indexed by each $s\mid M$ and columns indexed by each $d\mid M$, i.e., $e_{s,d}=1/d_s$. We will have proved Theorem \ref{thm:distinguishing_scrapes} once we argue that $E$ has full rank. To this end, apply to $E$ the following recursively defined row operations to get the matrix $X=(x_{s,d})$, and denote $E_s$ and $X_s$ as the $s$-th rows of $E$ and $X$, respectively. \begin{enumerate}
    \item Set the row $X_1:=E_1$.
    \item For each $s\mid M$ with $s>1$, take $\ds X_s:=E_s-\sum_{\substack{c\mid s\\ c\neq s}}X_c$.
\end{enumerate} Therefore, with $d\mid M$ fixed we have the following formula as a function of $s$: \begin{equation}\label{eqn:scrape_sum_formula}
    \ds \frac{1}{d_s}=\sum_{c\mid s}x_{c,d}.
\end{equation} Define the \textit{Möbius function}, where given an integer $n>0$, $$\mu(n):=\begin{cases}
    1 & \text{if }n=1\\
    (-1)^i & \text{if $n$ is the product of $i$ distinct primes}\\
    0 & \text{if $n$ has any repeated prime factors}.
\end{cases}$$ The Möbius function satisfies the following properties: \begin{enumerate}
    \item \textit{Sum of the Möbius function over divisors:} $\ds \sum_{d\mid n}\mu(d)=\begin{cases}
        1 & \text{if }n=1\\
        0 & \text{if }n>1
    \end{cases}$

    \item \textit{Möbius function is multiplicative:} For $a,b>0$ coprime, $\mu(ab)=\mu(a)\mu(b)$.

    \item \textit{Möbius inversion formula:} Suppose $f,g$ are arithmetic functions such that $\ds g(n)=\sum_{d\mid n}f(d)$, for each $n\geq1$. Then $\ds f(n)=\sum_{d\mid n}\mu(n/d)g(d)$, for each $n\geq1$.
\end{enumerate} By applying the Möbius inversion formula to (\ref{eqn:scrape_sum_formula}), we can express the entries of $X$ in the variable $s$ with the following sum \begin{equation}
    x_{s,d}=\sum_{c\mid s}\frac{\mu(s/c)}{d_c}.\label{eqn:sum_formula_for_X}
\end{equation}

Our aim is to express the entries $x_{s,d}$ of $X$ by a closed formula equivalent to (\ref{eqn:sum_formula_for_X}).

\begin{propn}\label{prop:entries_X_are_semimultiplicative}\mbox{}
    \begin{enumerate}
        \item Suppose $ab\mid M$ and $\gcd(a,b)=1$. Then for each $d\mid M$, we have $x_{a,d}\cdot x_{b,d}=\frac{1}{d}\cdot x_{ab,d}$.

        \item Suppose $a_1a_2\dots a_l\mid M$ and $\gcd(a_i,a_j)=1$, for each $i\neq j$. Then for each $d\mid M$, we have $$x_{a_1,d}\cdot x_{a_2,d}\cdot\ldots\cdot x_{a_l,d}=\frac{1}{d^{l-1}}\cdot x_{a_1a_2\dots a_l,d}.$$ 
    \end{enumerate}
\end{propn}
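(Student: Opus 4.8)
The plan is to prove statement (2) by induction on $l$ using statement (1) as the base step, so the real work is establishing the two-variable identity
$$x_{a,d}\cdot x_{b,d}=\tfrac{1}{d}\cdot x_{ab,d}\qquad(\gcd(a,b)=1,\ ab\mid M).$$
The natural route is to expand both sides using the M\"obius-sum expression (\ref{eqn:sum_formula_for_X}), namely $x_{a,d}=\sum_{c\mid a}\mu(a/c)/d_c$ and $x_{b,d}=\sum_{c'\mid b}\mu(b/c')/d_{c'}$, and to reduce the double sum to the single sum $\sum_{e\mid ab}\mu(ab/e)/d_e$. First I would use that $\gcd(a,b)=1$ forces every divisor $e$ of $ab$ to factor uniquely as $e=cc'$ with $c\mid a$, $c'\mid b$ and $\gcd(c,c')=1$, and that $\mu$ is multiplicative, so $\mu(ab/e)=\mu(a/c)\mu(b/c')$ (the quotients $a/c$ and $b/c'$ are also coprime). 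Thus the $\mu$-factors match up termwise under the bijection $(c,c')\leftrightarrow e$, and the whole identity collapses to the claim that, for coprime $c\mid a$ and $c'\mid b$,
$$\frac{1}{d_c}\cdot\frac{1}{d_{c'}}=\frac{1}{d}\cdot\frac{1}{d_{cc'}}.$$

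This last equation is exactly Proposition \ref{prop:scrapes_are_semimultiplicative}(1) applied with the coprime pair $(c,c')$ in place of $(a,b)$: that result gives $d\cdot d_{cc'}=d_c d_{c'}$, which is the reciprocal of what is wanted. (One should note the hypothesis $cc'\mid M$ needed there holds because $cc'\mid ab\mid M$.) So statement (1) follows by: expand via (\ref{eqn:sum_formula_for_X}); reindex the double sum over $(c,c')$ as a single sum over $e=cc'\mid ab$; invoke multiplicativity of $\mu$; and apply Proposition \ref{prop:scrapes_are_semimultiplicative}(1) termwise.

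For statement (2) I would induct on $l$. The case $l=2$ is statement (1). For the inductive step, set $a:=a_1\cdots a_{l-1}$ and $b:=a_l$; then $\gcd(a,b)=1$ and $ab\mid M$, so statement (1) gives $x_{a,d}\,x_{a_l,d}=\tfrac1d\,x_{a_1\cdots a_l,d}$, while the inductive hypothesis gives $x_{a_1,d}\cdots x_{a_{l-1},d}=\tfrac{1}{d^{\,l-2}}\,x_{a,d}$. Multiplying these two and simplifying the power of $d$ yields $x_{a_1,d}\cdots x_{a_l,d}=\tfrac{1}{d^{\,l-1}}\,x_{a_1\cdots a_l,d}$, as required.

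The only mildly delicate point — and the one I would be most careful about — is the reindexing in the proof of (1): one must check that $(c,c')\mapsto cc'$ really is a bijection from $\{(c,c'):c\mid a,\ c'\mid b\}$ onto $\{e:e\mid ab\}$ with $\gcd(c,c')=1$ automatic, and that the M\"obius factors transform correctly, i.e. $\mu(ab/(cc'))=\mu(a/c)\,\mu(b/c')$ with $\gcd(a/c,\,b/c')=1$. All of this is immediate from coprimality of $a$ and $b$ together with multiplicativity of $\mu$, so there is no genuine obstacle; the proof is essentially a bookkeeping exercise feeding Proposition \ref{prop:scrapes_are_semimultiplicative}(1) through a M\"obius convolution.
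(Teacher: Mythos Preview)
Your proposal is correct and follows essentially the same argument as the paper: expand $x_{a,d}x_{b,d}$ via the M\"obius sum (\ref{eqn:sum_formula_for_X}), use the bijection $(c,c')\leftrightarrow cc'$ and multiplicativity of $\mu$ from $\gcd(a,b)=1$, apply Proposition~\ref{prop:scrapes_are_semimultiplicative}(1) termwise to get $d_cd_{c'}=d\cdot d_{cc'}$, and then induct for part (2). The paper's proof is just a more compressed write-up of exactly these steps.
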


\begin{proof}
    The first statement follows from equation (\ref{eqn:sum_formula_for_X}), Proposition \ref{prop:scrapes_are_semimultiplicative}, and basic Möbius function properties: $$x_{a,d}\cdot x_{b,d}=\left(\sum_{f\mid a}\frac{\mu(a/f)}{d_f}\right)\left(\sum_{g\mid b}\frac{\mu(b/g)}{d_g}\right)=\sum_{f\mid a}\sum_{g\mid b}\frac{\mu(a/f)\mu(b/g)}{d_f d_g}=\frac{1}{d}\sum_{fg\mid ab}\frac{\mu(ab/fg)}{d_{fg}}=\frac{1}{d}\cdot x_{ab,d}.$$ The second statement follows inductively.
\end{proof}

\begin{thm}
    Fix $d\mid M$. Let $\ell$ be a prime and suppose that $\ell^a\mid\mid M$, for some $a>0$. For $0<j\leq a$, we have $$x_{\ell^j,d}=0\iff \ell^{a-j+1}\nmid d.$$ Moreover, whenever $\ell^{a-j+1}\mid d$ so that for some $k\geq1$, $\ell^{a-j+k}\mid\mid d$, i.e., $d_{\ell^j}=d/\ell^k$, we have \begin{equation}
        x_{\ell^j,d}=\frac{1}{d}\phi(\ell^k),\label{eqn:X_has_totient_fct}
    \end{equation} where $\phi$ denotes the \textit{Euler totient} function.
\end{thm}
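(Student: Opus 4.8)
The plan is to compute the single-prime sum from (\ref{eqn:sum_formula_for_X}) explicitly. Since $d \mid M$ and $\ell^a \mid\mid M$, write $v_\ell(d) = b$ with $0 \le b \le a$; the quantities $d_{\ell^i} = \gcd(d, M/\ell^i)$ depend on $i$ only through the $\ell$-part, namely $v_\ell(d_{\ell^i}) = \min\{b, a-i\}$, while the prime-to-$\ell$ part of $d_{\ell^i}$ equals that of $d$ for all $i \le a$. So in the sum $x_{\ell^j,d} = \sum_{c \mid \ell^j} \mu(\ell^j/c)/d_c$, only the terms $c = \ell^j$ and $c = \ell^{j-1}$ survive (all other $\mu(\ell^j/c) = 0$), giving
\begin{equation}
    x_{\ell^j,d} = \frac{1}{d_{\ell^j}} - \frac{1}{d_{\ell^{j-1}}}.\notag
\end{equation}
First I would establish this reduction, and then everything reduces to comparing $\min\{b, a-j\}$ with $\min\{b, a-j+1\}$.

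Next I would split into the two cases that the theorem distinguishes. If $\ell^{a-j+1} \nmid d$, i.e.\ $b \le a-j$, then both $\min\{b,a-j\}$ and $\min\{b,a-j+1\}$ equal $b$, so $d_{\ell^j} = d_{\ell^{j-1}}$ (both have $\ell$-valuation $b$, hence both equal $d$ after accounting for the prime-to-$\ell$ part), and therefore $x_{\ell^j,d} = 0$. Conversely, if $\ell^{a-j+1} \mid d$, i.e.\ $b \ge a-j+1$, then $\min\{b,a-j\} = a-j$ while $\min\{b,a-j+1\} = a-j+1$; writing $k := b - (a-j) \ge 1$ so that $\ell^{a-j+k} \mid\mid d$, we get $d_{\ell^j} = d/\ell^k$ and $d_{\ell^{j-1}} = d/\ell^{k-1}$, whence
\begin{equation}
    x_{\ell^j,d} = \frac{\ell^k}{d} - \frac{\ell^{k-1}}{d} = \frac{\ell^{k-1}(\ell - 1)}{d} = \frac{\phi(\ell^k)}{d}.\notag
\end{equation}
Since $\phi(\ell^k) = \ell^{k-1}(\ell-1) \ne 0$, this simultaneously proves the nonvanishing direction of the biconditional and formula (\ref{eqn:X_has_totient_fct}).

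I do not expect a serious obstacle here; the only care needed is the bookkeeping that the prime-to-$\ell$ part of $d_{\ell^i}$ stays constant for $0 \le i \le a$ (which is why the two surviving terms share a common prime-to-$\ell$ factor and the differences telescope cleanly), and the edge case $j = a$ where $d_{\ell^{j-1}} = d_{\ell^{a-1}}$ still has $\ell$-valuation $\min\{b,1\}$ — consistent with the formulas above. One could alternatively derive $x_{\ell^j,d} = 1/d_{\ell^j} - 1/d_{\ell^{j-1}}$ directly from the recursive row-operation definition of $X$ together with the transitivity of scrapes (Proposition \ref{prop:scrape_properties}(4)) rather than from Möbius inversion, but routing through (\ref{eqn:sum_formula_for_X}) is the cleanest. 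This closed form for the single-prime entries, combined with the semi-multiplicativity of Proposition \ref{prop:entries_X_are_semimultiplicative}, is exactly what is needed next to show that $X$ — and hence $E$ — is invertible.
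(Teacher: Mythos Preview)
Your proof is correct and follows the same route as the paper --- both compute $x_{\ell^j,d}$ from the M\"obius formula (\ref{eqn:sum_formula_for_X}) and split on whether $v_\ell(d)\le a-j$ or $v_\ell(d)\ge a-j+1$. Your execution is actually cleaner: you observe at the outset that $\mu(\ell^{j-i})$ vanishes except for $i\in\{j-1,j\}$, so the sum collapses immediately to $1/d_{\ell^j}-1/d_{\ell^{j-1}}$, and the rest is a one-line valuation comparison. The paper instead retains all $j+1$ terms, substitutes $d_{\ell^{j-i}}=d/\ell^{\max\{0,k-i\}}$, splits the sum at $i=k$, and then separately handles $k=1$ versus $k>1$; this works but obscures the fact that only two terms were ever nonzero. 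Your version buys a shorter and more transparent argument with no loss of rigor.
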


\begin{proof}
    Suppose $\ell^{a-j+1}\nmid d$. By definition, we have $d_{\ell^i}=d$ for each $0\leq i\leq j$. Therefore by (\ref{eqn:sum_formula_for_X}), we have $$\ds x_{\ell^j,d}=\sum_{i=0}^j\frac{\mu(\ell^i)}{d_{\ell^{j-i}}}=\frac{1}{d}\sum_{i=0}^j\mu(\ell^i)=\frac{1}{d}\sum_{c\mid\ell^j}\mu(c)=0,$$ since $j>0$. Now suppose $\ell^{a-j+1}\mid d$, and in particular, set $1\leq k\leq j$ so that it satisfies $\ell^{a-j+k}\mid\mid d$. We use the Möbius expression (\ref{eqn:sum_formula_for_X}) to yield the following: $$\ds x_{\ell^j,d}=\sum_{i=0}^j\frac{\mu(\ell^i)}{d_{\ell^{j-i}}}=\sum_{i=0}^{k-1}\frac{\mu(\ell^i)}{d_{\ell^{j-i}}}+\sum_{i=k}^{j}\frac{\mu(\ell^i)}{d_{\ell^{j-i}}}=\frac{1}{d}\left(\sum_{i=0}^{k-1}\ell^{k-i}\mu(\ell^i)+\sum_{i=k}^{j}\mu(\ell^i)\right)\neq0,$$ since we have $$\sum_{i=0}^{k-1}\ell^{k-i}\mu(\ell^i)=\begin{cases}
        \ell^k-\ell^{k-1} & \text{ if }k>1\\
        \ell & \text{ if }k=1
    \end{cases}$$ and $$\sum_{i=k}^{j}\mu(\ell^i)=\begin{cases}
        0 & \text{ if }k>1\\
        -1 & \text{ if }k=1.
    \end{cases}$$
\end{proof}

\begin{cor}\label{cor:entrywiseCriterion}
    Suppose $d\mid M$, with prime factorization $M=p_1^{a_1}p_2^{a_2}\dots p_q^{a_q}\dots p_r^{a_r}$. For $0<j_t\leq a_t$ where $1\leq t\leq q$, and setting $s=p_1^{j_1}p_2^{j_2}\dots p_q^{j_q}\neq1$, we have $$x_{s,d}=x_{(p_1^{j_1}p_2^{j_2}\dots p_q^{j_q}),\ d}=0\iff\text{there is some $1\leq t\leq q$ satisfying } p_t^{a_t-j_t+1}\nmid d.$$

    In the case where each $1\leq t\leq q$ satisfies $p_t^{a_t-j_t+1}\mid d$, and in particular suppose $p_t^{a_t-j_t+k_t}\mid\mid d$ for $1\leq k_t\leq j_t$, then \begin{equation}\label{eqn:formula_for_X}
        x_{s,d}=\frac{1}{d}\phi(p_1^{k_1}p_2^{k_2}\dots p_q^{k_q})=\frac{\phi(d/d_s)}{d}.
    \end{equation}
\end{cor}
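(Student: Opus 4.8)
The plan is to bootstrap from the single-prime result (the preceding theorem) via the multiplicativity of the entries $x_{s,d}$ established in Proposition \ref{prop:entries_X_are_semimultiplicative}. Write $s = p_1^{j_1}p_2^{j_2}\cdots p_q^{j_q}$ as in the statement; since the $p_t^{j_t}$ are pairwise coprime, part 2 of Proposition \ref{prop:entries_X_are_semimultiplicative} gives
\begin{equation*}
    x_{p_1^{j_1},d}\cdot x_{p_2^{j_2},d}\cdots x_{p_q^{j_q},d} = \frac{1}{d^{q-1}}\cdot x_{s,d}.
\end{equation*}
So $x_{s,d}=0$ if and only if at least one factor $x_{p_t^{j_t},d}$ on the left vanishes, and by the preceding theorem (applied with $\ell=p_t$, $a=a_t$, $j=j_t$, noting $0<j_t\le a_t$) this happens exactly when $p_t^{a_t-j_t+1}\nmid d$. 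That proves the displayed biconditional.

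For the formula in the non-vanishing case, suppose every $t\le q$ satisfies $p_t^{a_t-j_t+1}\mid d$, and pick $k_t\ge 1$ with $p_t^{a_t-j_t+k_t}\mid\mid d$; one checks $1\le k_t\le j_t$ as in the theorem (the lower bound because $p_t^{a_t-j_t+1}\mid d$, the upper bound because $v_{p_t}(d)\le v_{p_t}(M)=a_t$). The theorem gives $x_{p_t^{j_t},d} = \frac{1}{d}\phi(p_t^{k_t})$ for each $t$. Substituting into the multiplicativity identity above,
\begin{equation*}
    x_{s,d} = d^{q-1}\prod_{t=1}^{q} x_{p_t^{j_t},d} = d^{q-1}\prod_{t=1}^{q}\frac{\phi(p_t^{k_t})}{d} = \frac{1}{d}\prod_{t=1}^{q}\phi(p_t^{k_t}) = \frac{1}{d}\,\phi\!\left(\prod_{t=1}^{q}p_t^{k_t}\right),
\end{equation*}
using that $\phi$ is multiplicative on the pairwise-coprime prime powers $p_t^{k_t}$. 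It remains to identify $\prod_t p_t^{k_t}$ with $d/d_s$. By definition $d_s=\gcd(d,M/s)$, so for each prime $p$ one has $v_p(d_s)=\min\{v_p(d),\,v_p(M)-v_p(s)\}$; for $p=p_t$ with $t\le q$ this is $\min\{a_t-j_t+k_t,\,a_t-j_t\}=a_t-j_t$, hence $v_{p_t}(d/d_s)=k_t$, while for every other prime $v_p(s)=0$ forces $v_p(d_s)=v_p(d)$ and $v_p(d/d_s)=0$. Therefore $d/d_s=\prod_{t=1}^q p_t^{k_t}$ and the formula $x_{s,d}=\phi(d/d_s)/d$ follows.

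The only mild subtlety — and the one place to be careful — is the bookkeeping with valuations: confirming $1\le k_t\le j_t$ so that the preceding theorem genuinely applies coordinatewise, and verifying the $\min$ computation that yields $v_{p_t}(d_s)=a_t-j_t$ precisely in the non-vanishing regime. Everything else is a direct combination of the single-prime theorem, the semimultiplicativity of the $x_{s,d}$, and multiplicativity of $\phi$, so there is no real obstacle beyond organizing these ingredients.
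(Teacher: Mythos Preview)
Your proof is correct and follows exactly the approach the paper takes: combine the single-prime theorem with the semimultiplicativity identity from Proposition~\ref{prop:entries_X_are_semimultiplicative} and the multiplicativity of $\phi$. Your write-up is simply more explicit than the paper's one-line justification, in particular spelling out the valuation computation identifying $\prod_t p_t^{k_t}$ with $d/d_s$.
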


\begin{proof}
    This follows from Proposition \ref{prop:entries_X_are_semimultiplicative}, equation (\ref{eqn:X_has_totient_fct}), and the multiplicativity of the Euler totient function.
\end{proof}

We set up convenient notation from the corollary.

\begin{defn}
    Let $s\mid M$ and suppose $M=p_1^{a_1}p_2^{a_2}\dots p_q^{a_q}\dots p_r^{a_r}$ and $s=p_1^{j_1}p_2^{j_2}\dots p_q^{j_q}$ are their prime factorizations with $0<j_t\leq a_t$. We define the \textit{pivot} of $s$ by the following: define $\tilde{1}:=1$, and for $s\neq1$ set $\tilde{s}:=p_1^{a_1-j_1+1}p_2^{a_2-j_2+1}\dots p_q^{a_q-j_q+1}$.
\end{defn}

 Corollary \ref{cor:entrywiseCriterion} can be succinctly written as $x_{s,d}=\begin{cases}
    0 & \text{ if }\tilde{s}\nmid d\\
    \frac{\phi(d/d_s)}{d} & \text{ if }\tilde{s}\mid d.
\end{cases}$. Also, observe that $\tilde{\tilde{s}}=s$, for each $s\mid M$.

\begin{prop}\label{prop:E_has_full_rank}
    For each column $d\mid M$, the row $\tilde{d}\mid M$ is the unique row in satisfying the following property: $x_{\tilde{d},d}\neq0$ and $x_{\tilde{d},d'}=0$ for each $d'<d$. Equivalently, there is some permutation of the rows of $X$ which is an upper triangular matrix and each entry $x_{\tilde{d},d}\neq0$ is a pivot entry of $X$.
\end{prop}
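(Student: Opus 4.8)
The plan is to exploit the closed formula from Corollary \ref{cor:entrywiseCriterion}, rephrased via the pivot notation, which says $x_{s,d}\neq0$ if and only if $\tilde s\mid d$. First I would establish the two halves of the claimed property for the row indexed by $\tilde d$ in column $d$. The nonvanishing $x_{\tilde d,d}\neq0$ is immediate: applying the criterion with $s=\tilde d$, we need $\tilde{\tilde d}\mid d$, and we have already observed $\tilde{\tilde s}=s$, so $\tilde{\tilde d}=d\mid d$ trivially. For the vanishing part, fix any $d'<d$ (i.e.\ $d'\mid d$ with $d'\neq d$); I would show $x_{\tilde d,d'}=0$ by showing $\tilde{\tilde d}=d\nmid d'$. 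Indeed $d\nmid d'$ precisely because $d'$ is a proper divisor of $d$, so the criterion gives $x_{\tilde d,d'}=0$. This handles existence of a row with the stated property.

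Next I would prove uniqueness: suppose some row $s\mid M$ also satisfies $x_{s,d}\neq0$ and $x_{s,d'}=0$ for all $d'<d$. From $x_{s,d}\neq0$ and the criterion, $\tilde s\mid d$. If $\tilde s\neq d$, then $\tilde s$ is a proper divisor of $d$, hence $\tilde s<d$, so by hypothesis $x_{s,\tilde s}=0$; but the criterion at column $\tilde s$ requires $\tilde s\mid \tilde s$, which holds, so $x_{s,\tilde s}\neq0$, a contradiction. Therefore $\tilde s=d$, whence $s=\tilde{\tilde s}=\tilde d$. So $\tilde d$ is the unique such row.

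For the ``equivalently'' clause, I would argue that the map $d\mapsto\tilde d$ is an involution on the set of divisors of $M$ (immediate from $\tilde{\tilde s}=s$), hence a permutation; reindexing the rows of $X$ by this permutation, the $(d,d)$ entry of the reordered matrix is $x_{\tilde d,d}\neq0$, while the $(d,d')$ entry for $d'<d$ is $x_{\tilde d,d'}=0$. Ordering the index set of divisors of $M$ so that $d'<d$ (divisibility) implies $d'$ comes before $d$ (any linear extension of the divisibility order will do), this says the reordered matrix is upper triangular with nonzero diagonal entries $x_{\tilde d,d}$, which are therefore pivots. I do not anticipate a serious obstacle here: the entire argument is a bookkeeping consequence of the formula $x_{s,d}\neq0\iff\tilde s\mid d$ together with the involutivity $\tilde{\tilde s}=s$; the only point requiring a little care is phrasing ``upper triangular'' correctly, since that requires fixing a linear order on the divisor poset compatible with divisibility, and making sure the permutation $d\mapsto\tilde d$ is applied to rows (not columns). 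As an immediate corollary (used to finish Theorem \ref{thm:distinguishing_scrapes}), $\det X\neq0$, so $E$ has full rank.
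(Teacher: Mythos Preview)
Your proposal is correct and follows essentially the same approach as the paper: both arguments reduce everything to the criterion $x_{s,d}\neq0\iff\tilde s\mid d$ together with the involutivity $\tilde{\tilde s}=s$, and deduce $\tilde s=d$ from the fact that $\tilde s\mid d$ while $\tilde s$ divides no proper divisor of $d$. Your write-up is simply more explicit than the paper's (you spell out the existence half and the linear-extension remark for the upper-triangular reformulation, which the paper leaves implicit).
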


\begin{proof}
    Given $d\mid M$, suppose $x_{s,d}\neq0$ and $x_{s,d'}=0$ for each $d'<d$. By Corollary \ref{cor:entrywiseCriterion}, we have $\tilde{s}\mid d$ but $\tilde{s}\nmid c$, for any proper divisor $c$ of $d$. It must be that $\tilde{s}=d$, that is, $s=\tilde{d}$.
\end{proof}

This proves Theorem \ref{thm:distinguishing_scrapes} since Proposition \ref{prop:E_has_full_rank} establishes that the matrix $E$ has full rank.

\subsection{Proof of Theorem \ref{thm:distinguishing_scrape_closures}}

In this section, we prove the contrapositive of Theorem \ref{thm:distinguishing_scrape_closures}. Similar to the previous subsection, we will produce another homogeneous system of linear equations, corresponding to a matrix $Y$. The initial aspects to this proof resemble that of Theorem \ref{thm:distinguishing_scrapes} for the matrix $X$, but the differences from $X$ will help to characterize $Y$.

We start by assuming that $\chi(\ol{\m_s})=\chi(\ol{\n_s})$ for every $s\mid M$, and it remains to show that $\m=\n$, i.e., $\De_d:=A_d-B_d=0$ for each $d\mid M$.

Consider the closure $\ol{\cvec}$ of $\cvec\mid\m$. For each element $z\in\cvec$, we denote $z'$ as the corresponding element in $\ol{\cvec}$, according to Definition \ref{def:closure}. Observe the only 3 possibilities where $z\neq z'$ could occur: \begin{itemize}
    \item $z=1$ and $z'=2$,
    \item $z=1$ and $z'=3$, and
    \item $z=2$ and $z'=3$.
\end{itemize} For each $s\mid M$, \begin{equation}
    \chi(\ol{\m_s})=\chi(\ol{\n_s}) \iff \sum_{d\mid M}\frac{1}{d_s'}\De_d=0,\label{eqn:homogeneous_eqn2}
\end{equation} and let $F=(f_{s,d})$ be the square matrix representing equation (\ref{eqn:homogeneous_eqn2}), i.e., $f_{s,d}=1/d_s'$. This time; however, we will see that $F$ might not have full rank. In fact, we will later append 3 more equations to $F$ to ensure a full rank matrix. At this point, we would like to understand the nullity of $F$, and we apply recursively defined row operations to get the matrix $Y=(y_{s,d})$, and denote $F_s$ and $Y_s$ as the $s$-th rows of $F$ and $Y$, respectively. \begin{enumerate}
    \item Set the row $Y_1:=F_1$.
    \item For each $s\mid M$ with $s>1$, take $\ds Y_s:=F_s-\sum_{\substack{c\mid s\\ c\neq s}}Y_c$.
\end{enumerate} Therefore, with $d\mid M$ fixed we have the following formula as a function of $s$: \begin{equation}\label{eqn:scrape_closure_sum_formula}
    \ds \frac{1}{d_s'}=\sum_{c\mid s}y_{c,d}.
\end{equation} By applying the Möbius inversion formula to (\ref{eqn:scrape_closure_sum_formula}), we can express the entries of $Y$ by \begin{equation}
    y_{s,d}=\sum_{c\mid s}\frac{\mu(s/c)}{d_c'}.\label{eqn:sum_formula_for_Y}
\end{equation} The matrices $X$ and $Y$ are not very different from each other. We compare the matrices $X$ and $Y$, taking advantage that $X$ is a square matrix with full rank. We use (\ref{eqn:sum_formula_for_X}) and (\ref{eqn:sum_formula_for_Y}) to consider the difference \begin{align*}
    x_{s,d}-y_{s,d}= \sum_{\substack{c\mid s\\ d_c\neq d_c'}}\left(\frac{1}{d_c}-\frac{1}{d_c'}\right)\mu(s/c)= \underbrace{\frac{1}{2}\sum_{\substack{c\mid s\\ d_c=1\\ d_c'=2}}\mu(s/c)}_{R}+\underbrace{\frac{2}{3}\sum_{\substack{c\mid s\\ d_c=1\\ d_c'=3}}\mu(s/c)}_{S}+\underbrace{\frac{1}{6}\sum_{\substack{c\mid s\\ d_c=2\\ d_c'=3}}\mu(s/c)}_{T}.
\end{align*}

Take $M=2^a3^bp_1^{a_1}\dots p_q^{a_q}\dots p_r^{a_r}$ to be a product of distinct prime powers, where $a,b\geq 0$ and $a_t>0$. In the following scenarios, we exploit the definitions of scrapes and closures to characterize the values of $s,d\mid M$ which contribute (or contribute zero) to the summands $R$, $S$, and $T$.

\begin{enumerate}
    \item[$R$:] The condition $d_c=1$ and $d_c'=2$ is equivalent to having $d=2^jp_1^{j_1}\dots p_q^{j_q}$, where $j,j_t>0$, and\\ $2^ap_1^{a_1}\dots p_q^{a_q}\mid c$. Since $c\mid s$, we set $s=2^ap_1^{a_1}\dots p_q^{a_q}k$ so that $c=2^ap_1^{a_1}\dots p_q^{a_q}k'$, for $k'\mid k$. Hence, the summand $R$ simplifies to $$\frac{1}{2}\sum_{\substack{c\mid s\\ d_c=1\\ d_c'=2}}\mu(s/c)=\frac{1}{2}\sum_{k'\mid k}\mu(k/k')=\begin{cases}
        1/2, & \text{if }k=1\\
        0, & \text{if }k>1.
    \end{cases}$$

    \item[$S$:] The condition $d_c=1$ and $d_c'=3$ is equivalent to having $d=3^lp_1^{j_1}\dots p_q^{j_q}$, where $l,j_t>0$ and for the convenience of notation, we allow for the possibility for $2\in\{p_1,\dots,p_q\}$. Additionally, $3^bp_1^{a_1}\dots p_q^{a_q}\mid c$. Since $c\mid s$, we set $s=3^bp_1^{a_1}\dots p_q^{a_q}k$ so that $c=3^bp_1^{a_1}\dots p_q^{a_q}k'$, for $k'\mid k$. Hence, the summand $S$ simplifies to $$\frac{2}{3}\sum_{\substack{c\mid s\\ d_c=1\\ d_c'=3}}\mu(s/c)=\frac{2}{3}\sum_{k'\mid k}\mu(k/k')=\begin{cases}
        2/3, & \text{if }k=1\\
        0, & \text{if }k>1.
    \end{cases}$$
    
    \item[$T$:] We characterize the conditions $d_c=2$ and $d_c'=3$ using two cases for $d$: the cases $4 \mid d$ and $2\mid\mid d$.\\ If $4 \mid d$, the conditions $d_c=2$ and $d_c'=3$ are equivalent to having $d=2^j3^lp_1^{j_1}\dots p_q^{j_q}$, where $l,j_t>0$ and $j\geq2$; $2^{a-1}3^bp_1^{a_1}\dots p_q^{a_q}\mid c$; and $2^{a-1}\mid\mid c$. Since $c\mid s$, it must be that $s=2^{a-1}3^bp_1^{a_1}\dots p_q^{a_q}k$ or $s=2^{a}3^bp_1^{a_1}\dots p_q^{a_q}k$.
    
    \begin{enumerate}
        \item In the case $2^{a-1}\mid\mid s$, suppose $s=2^{a-1}3^bp_1^{a_1}\dots p_q^{a_q}k$ and $c=2^{a-1}3^bp_1^{a_1}\dots p_q^{a_q}k'$, for some $k'\mid k$. The summand $T$ simplifies to $$\frac{1}{6}\sum_{\substack{c\mid s\\ d_c=2\\ d_c'=3}}\mu(s/c)=\frac{1}{6}\sum_{k'\mid k}\mu(k/k')=\begin{cases}
        1/6, & \text{if }k=1\\
        0, & \text{if }k>1.
    \end{cases}$$

        \item In the case $2^{a}\mid\mid s$, suppose $s=2^{a}3^bp_1^{a_1}\dots p_q^{a_q}k$ and $c=2^{a-1}3^bp_1^{a_1}\dots p_q^{a_q}k'$, for some $k'\mid k$. The summand $T$ simplifies to $$\frac{1}{6}\sum_{\substack{c\mid s\\ d_c=2\\ d_c'=3}}\mu(s/c)=\frac{1}{6}\sum_{k'\mid k}\mu(2k/k')=\frac{1}{6}\sum_{k'\mid k}-\mu(k/k')=\begin{cases}
        -1/6, & \text{if }k=1\\
        0, & \text{if }k>1.
    \end{cases}$$
    \end{enumerate}
    
    On the other hand if $2 \mid\mid d$, the conditions $d_c=2$ and $d_c'=3$ are equivalent to having $d=2\cdot3^lp_1^{j_1}\dots p_q^{j_q}$, where $l,j_t>0$; $3^bp_1^{a_1}\dots p_q^{a_q}\mid c$; and $2^{a}\nmid c$. Since $c\mid s$, it must be that $s=2^{j}3^bp_1^{a_1}\dots p_q^{a_q}k$ for some $j \geq0$.
    
    \begin{enumerate}
        \item If $2^{a}\nmid s$, suppose $s=2^{j}3^bp_1^{a_1}\dots p_q^{a_q}k$ and $c= 2^{i}3^bp_1^{a_1}\dots p_q^{a_q}k'$, for $j<a$, $k'\mid k$, and $\gcd(2,k)=1$. The summand $T$ simplifies to \begin{align*}
            \frac{1}{6}\sum_{\substack{c\mid s\\ d_c=2\\ d_c'=3}}\mu(s/c) &= \frac{1}{6}\sum_{i=0}^j\sum_{k'\mid k}\mu(2^{j-i}k/k')=\frac{1}{6}\left(\sum_{i=0}^j\mu(2^{j-i})\right)\left(\sum_{k'\mid k}\mu(k/k')\right)\\
            &=\frac{1}{6}\left(\begin{cases}
                1, & \text{if }j=0\\
                0, & \text{if }j>0
            \end{cases}\right)\left(\begin{cases}
                1, & \text{if }k=1\\
                0, & \text{if }k>0
            \end{cases}\right)=\begin{cases}
                1/6, & \text{if $(j,k)=(0,1)$}\\
                0, & \text{else}.
            \end{cases}
        \end{align*}
        
        \item If $2^{a}\mid\mid s$, suppose $s=2^{a}3^bp_1^{a_1}\dots p_q^{a_q}k$ and $c=2^{i}3^bp_1^{a_1}\dots p_q^{a_q}k'$, for $k'\mid k$. Recall that we must have $i<a$. The summand $T$ simplifies to \begin{align*}
            \frac{1}{6}\sum_{\substack{c\mid s\\ d_c=2\\ d_c'=3}}\mu(s/c)=\frac{1}{6}\sum_{i=0}^{a-1}\sum_{k'\mid k}\mu(2^{a-i}k/k') &= \frac{1}{6}\sum_{i=0}^a\sum_{k'\mid k}\mu(2^{a-i}k/k')-\frac{1}{6}\sum_{k'\mid k}\mu(2^{a-a}k/k')\\
            &=\frac{1}{6}\left(\sum_{i=0}^a\mu(2^{a-i})\right)\left(\sum_{k'\mid k}\mu(k/k')\right)-\frac{1}{6}\sum_{k'\mid k}\mu(k/k')\\
            &=\left(\begin{cases}
                1/6, & \text{if $(a,k)=(0,1)$}\\
                0, & \text{else}
            \end{cases}\right)-\left(\begin{cases}
                1/6, & \text{if $k=1$}\\
                0, & \text{if $k>1$}
            \end{cases}\right)\\
            &=\begin{cases}
                -1/6, & \text{if $k=1$ and $a > 0$}\\
                0, & \text{else}.
            \end{cases}
        \end{align*}
    \end{enumerate}
\end{enumerate}

Given the prime factorization $M=2^a3^bp_1^{a_1}\dots p_q^{a_q}\dots p_r^{a_r}$, where $a,b\geq0$ and $a_t>0$, the following table summarizes the range of differences $x_{s,d}-y_{s,d}$, for each $s,d\mid M$, in mutually exclusive cases. We assume that each of the powers of the primes for $s$ and $d$ appearing in the table below are positive.

\begin{center}
\begin{tabular}{ | l | l | c | c | c | c | } 
  \hline
  $s$ & $d$ & $x_{s,d}-y_{s,d}$ & $R+S+T$ & Does $\tilde{s}\mid d$? & Comments \\ 
  \hline
  $2^ap_1^{a_1}\dots p_q^{a_q}$ & $2^jp_1^{j_1}\dots p_q^{j_q}$ & 1/2 & $1/2+0+0$ & Yes & $2\mid d,\ 3\nmid d$\\ 
  \hline
  $3^bp_1^{a_1}\dots p_q^{a_q}$ & $3^lp_1^{j_1}\dots p_q^{j_q}$ & 2/3 & $0+2/3+0$ & Yes & $3\mid d,\ 2\nmid d$\\ 
  \hline
  $2^{a-1}3^bp_1^{a_1}\dots p_q^{a_q}$ & $2^j3^lp_1^{j_1}\dots p_q^{j_q}$ & 1/6 & $0+0+1/6$ & Yes & $4\mid d,\ 3\mid d,\ 2^{a-1}\mid\mid s$
  \\ 
  \hline
  $3^bp_1^{a_1}\dots p_q^{a_q}$ & $2\cdot3^lp_1^{j_1}\dots p_q^{j_q}$ &  1/6 & $0+0+1/6$ & Yes & $2\mid\mid d,\ 3\mid d,\ 2\nmid s$\\
  \hline
  $2^{a}3^bp_1^{a_1}\dots p_q^{a_q}$ & $2^j3^lp_1^{j_1}\dots p_q^{j_q}$ & 1/2 & $0+2/3-1/6$ & Yes & $2\mid d$, $3\mid d$, $2^a\mid\mid s$\\
  \hline
  else & else & 0 & $0+0+0$ & Irrelevant & $x_{s,d}=y_{s,d}$\\ 
  \hline
\end{tabular}
\end{center}

From Corollary \ref{cor:entrywiseCriterion}, the entries of the matrix $X$ are $x_{s,d}=\begin{cases}
    0 & \text{ if }\tilde{s}\nmid d\\
    \frac{\phi(d/d_s)}{d} & \text{ if }\tilde{s}\mid d.
\end{cases}$ Moreover, the table shows that $x_{s,d}=0$ implies $y_{s,d}=0$. Hence, just like for $X$, the matrix $Y$ is an upper triangular after applying the exact same row interchanges needed for $X$. Furthermore, recall that the matrices $X,Y$ are relative to the value of $M$, and the table above proves the following: 

\begin{lemma}\label{lem:independence_of_sd}
    The values of $x_{s,d}$ and $y_{s,d}$ are independent of the multiple $M$ chosen for $s$ and $d$.
\end{lemma}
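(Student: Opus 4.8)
The plan is to read the lemma off the two closed forms already in hand: Corollary~\ref{cor:entrywiseCriterion} describes $x_{s,d}$, and the case table above describes the difference $x_{s,d}-y_{s,d}$, after which $y_{s,d}=x_{s,d}-(x_{s,d}-y_{s,d})$. In both descriptions the modulus $M$ enters only through the pivot $\tilde s$ and the scrape $d_s=\gcd(d,M/s)$, so everything reduces to checking that, in each of the cases distinguished by the table, these two quantities recombine into expressions that make no further reference to $M$ beyond the combinatorial type of the pair $(s,d)$ recorded in the ``comments'' column.

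First I would restate Corollary~\ref{cor:entrywiseCriterion} as $x_{s,d}=\phi(d/d_s)/d$ when $\tilde s\mid d$ and $x_{s,d}=0$ otherwise, and then go through the rows of the table one at a time, using the involution $\tilde{\tilde s}=s$ and the explicit shape of the pivot. In each of the five rows with a nonzero difference, $M/s$ and $d$ turn out to be supported on almost disjoint sets of primes, so $d_s$ is $1$ or $2$ and $x_{s,d}$ simplifies to $\phi(d)/d$ or $\phi(d/2)/d$; along the diagonal $s=\tilde d$ one has $\tilde s=d$, so the triggering condition degenerates to $d\mid d$ and $d_s=d/\on{rad}(d)$, giving the manifestly $M$-free value $x_{\tilde d,d}=\phi(\on{rad}(d))/d=\prod_{\ell\mid d}(\ell-1)/d$. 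The table supplies $x_{s,d}-y_{s,d}\in\{0,\tfrac16,\tfrac12,\tfrac23\}$, each value governed by divisibility conditions among $s$, $d$, $\tilde s$ and the primes $2,3$ coming from Definition~\ref{def:closure}; subtracting from $x_{s,d}$ gives the corresponding $M$-free value of $y_{s,d}$.

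The step I expect to be the real work is precisely this case-by-case $M$-cancellation: the formulas for $\tilde s$, for $d_s$, and for several of the table's side conditions (such as ``$2^{a-1}\mid\mid s$'', where $a=v_2(M)$) are literally written in terms of $M$, so one must verify — row by row, and that the rows are exhaustive — that what actually pins down the entry is the divisibility pattern of $s$, $d$, $\tilde s$ with respect to the small primes, which is intrinsic data of $s$ and $d$. Once this is done over the finitely many rows, the lemma follows; as a byproduct one obtains, uniformly in $M$, the fixed finite set of columns $d$ whose diagonal entry $y_{\tilde d,d}$ vanishes — exactly the $d$ for which $\phi(\on{rad}(d))/d$ equals the table correction, which a short check forces to be $d\in\{2,3,12\}$ — and this is precisely the information needed to append the three extra equations in the proof of Theorem~\ref{thm:distinguishing_scrape_closures}.
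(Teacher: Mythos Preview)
Your plan is exactly the paper's --- the paper says nothing beyond ``the table above proves the following'' --- so you have already written more than it does. But there is a genuine gap, and it lies in the lemma itself as much as in your argument: the claim is false as literally stated. Take $s=d=2$. For $M=2$ one has $\tilde 2=2\mid d$ and $d_s=\gcd(2,1)=1$, so $x_{2,2}=\phi(2)/2=1/2$; for $M=4$ one has $\tilde 2=4\nmid 2$, so $x_{2,2}=0$. Your key sentence --- that the divisibility pattern of $s,d,\tilde s$ is ``intrinsic data of $s$ and $d$'' --- is precisely where this breaks: the pivot $\tilde s$ genuinely depends on $M$, and with it the entry. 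No amount of row-by-row checking will remove that dependence, because it is really there.

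What \emph{is} true, and what the application to the $\tilde D\times D$ submatrix actually uses, is the reindexed statement: for $e,d\mid M$, the entries $x_{\tilde e,d}$ and $y_{\tilde e,d}$ depend only on $e$ and $d$. For $x$ this is immediate from Corollary~\ref{cor:entrywiseCriterion}: the nonvanishing condition $\tilde{\tilde e}\mid d$ reads $e\mid d$, and when $e\mid d$ one has $v_p(M/\tilde e)=v_p(e)-1$ for $p\mid e$ and $v_p(M/\tilde e)=v_p(M)\ge v_p(d)$ otherwise, whence $d/d_{\tilde e}=\prod_{p\mid e}p^{\,v_p(d)-v_p(e)+1}$, an $M$-free expression. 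For $y$ one then checks that each table hypothesis, rewritten via $s=\tilde e$, becomes a condition on $e$ and $d$ alone (e.g.\ ``$2^a\mid\mid s$'' becomes ``$2\mid\mid e$'', and ``$2^{a-1}\mid\mid s$'' becomes ``$4\mid\mid e$''). Your row-by-row plan, and your diagonal computation $x_{\tilde d,d}=\prod_{\ell\mid d}(\ell-1)/d$, establish this corrected statement cleanly; they cannot establish the one written.
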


We now show the nullity of $F$ (and $Y$) is at most 3 by comparing the rows $X_s$ and $Y_s$. From Theorem \ref{thm:distinguishing_scrapes}, we have that $X$ has full rank; in particular, for each column $d\mid M$, the entry $x_{\tilde{d},d}\neq0$ in $X$ is a pivot. We prove that $y_{\tilde{d},d}=0$ is not a pivot entry of $Y$ if and only if $d\in\{2,3,12\}$. We prove this by analyzing each table entry, paying particular attention to where $x_{\tilde{d},d}\neq y_{\tilde{d},d}$:

\begin{enumerate}
    \item When $s=2^ap_1^{a_1}\dots p_q^{a_q}$ and $d=2^jp_1^{j_1}\dots p_q^{j_q}$, we have $\tilde{s}\mid d$ and $$x_{s,d}=\frac{\phi(d/d_s)}{d}=\frac{\phi(d)}{d}=\frac{\phi(2^jp_1^{j_1}\dots p_q^{j_q})}{2^jp_1^{j_1}\dots p_q^{j_q}}=\frac{(p_1-1)\dots(p_q-1)}{2p_1\dots p_q},$$ which is equal to 1/2 if and only if $q=0$. Thus, $y_{s,d}=x_{s,d}-1/2=0$ if and only if $s=2^a$ and $d=2^j$ with $j>0$. Therefore, the matrix $Y$ does not have a pivot at $y_{2^a,\widetilde{2^a}}=y_{\widetilde{2},2}=0$.

    \item When $s=3^bp_1^{a_1}\dots p_q^{a_q}$ and $d=3^lp_1^{j_1}\dots p_q^{j_q}$, we have $\tilde{s}\mid d$ and $$x_{s,d}=\frac{\phi(d/d_s)}{d}=\frac{\phi(d)}{d}=\frac{\phi(3^lp_1^{j_1}\dots p_q^{j_q})}{3^lp_1^{j_1}\dots p_q^{j_q}}=\frac{2(p_1-1)\dots(p_q-1)}{3p_1\dots p_q},$$ which is equal to 2/3 if and only if $q=0$. Thus, $y_{s,d}=x_{s,d}-2/3=0$ if and only if $s=3^b$ and $d=3^l$ with $l>0$. Therefore, the matrix $Y$ does not have a pivot at $y_{3^b,\widetilde{3^b}}=y_{\widetilde{3},3}=0$.

    \item When $s=2^{a-1}3^bp_1^{a_1}\dots p_q^{a_q}$ and $d=2^j3^lp_1^{j_1}\dots p_q^{j_q}$ with $j\geq2$, we have $$x_{s,d}=\frac{\phi(d/d_s)}{d}=\frac{\phi(d/2)}{d}=\frac{\phi(2^{j-1}3^lp_1^{j_1}\dots p_q^{j_q})}{2^j3^lp_1^{j_1}\dots p_q^{j_q}}=\frac{(p_1-1)\dots(p_q-1)}{6p_1\dots p_q},$$ which is equal to 1/6 if and only if $q=0$. Thus, $y_{s,d}=x_{s,d}-1/6=0$ if and only if $s=2^{a-1}3^b$ and $d=2^j3^l$ with $j\geq2$ and $l>0$. Therefore, the matrix $Y$ does not have a pivot at $y_{2^{a-1}3^b,\widetilde{2^{a-1}3^b}}=y_{\widetilde{12},12}=0$.

    \item When $s=3^bp_1^{a_1}\dots p_q^{a_q}$ and $d=2\cdot3^lp_1^{j_1}\dots p_q^{j_q}$, we have $$x_{s,d}=\frac{\phi(d/d_s)}{d}=\frac{\phi(d/2)}{d}=\frac{\phi(3^lp_1^{j_1}\dots p_q^{j_q})}{2\cdot3^lp_1^{j_1}\dots p_q^{j_q}}=\frac{(p_1-1)\dots(p_q-1)}{6p_1\dots p_q},$$ which is equal to 1/6 if and only if $q=0$. Thus, $y_{s,d}=x_{s,d}-1/6=0$ if and only if $s=3^b$ and $d=2\cdot3^l$. Notice (2) already addresses the row $s=3^b$, so this case can be ignored for our nullity count.
    
    \item When $s=2^{a}3^bp_1^{a_1}\dots p_q^{a_q}$ and $d=2^j3^lp_1^{j_1}\dots p_q^{j_q}$, we have $$x_{s,d}=\frac{\phi(d/d_s)}{d}=\frac{\phi(d)}{d}=\frac{\phi(2^j3^lp_1^{j_1}\dots p_q^{j_q})}{2^j3^lp_1^{j_1}\dots p_q^{j_q}}=\frac{(p_1-1)\dots(p_q-1)}{6p_1\dots p_q},$$ which can never be equal to 1/2. Thus, $y_{s,d}=x_{s,d}-1/2\neq0$ for any such choices of $s,d$.

    \item For all the other columns $d$, we have $x_{s,d}=y_{s,d}$, so the pivots in these columns remain unchanged from $X$ to $Y$.
\end{enumerate}

This proves that the nullity of $F$ (and $Y$) is at most three, exactly in the columns $d=2,3$ and $12$, whichever ones exist in $Y$. Now, we append to both $F$ and $Y$ three specific linear equations in the variables $\De_d$, for each $d\mid M$, and call the resulting matrices $F'$ and $Y'$, respectively. We prove that $F'$ (and $Y'$) has full rank:

\begin{enumerate}
    \item Regarding the lack of pivot in column $d=2$ (resp. $d=3$), we append the equation $$\ds \sum_{\substack{2\mid\mid d\\ d\mid M}}\De_d=0 \text{ (resp. }\sum_{\substack{3\mid\mid d\\ d\mid M}}\De_d=0),$$ which is a consequence of $\m$ and $\n$ having the same abelianization. Observe that this equation leads with $\De_2$ (resp. $\De_3$), hence appending this equation patches the lack of a pivot. Recycling notation, we call the appended row $Y_{\tilde{2}}$ (resp. $Y_{\tilde{3}}$).
    
    \item Regarding the lack of pivot in column $d=12$, append the equation $\ds\sum_{d\mid M}\Delta_d=0$, which is equivalent to having $\m$ and $\n$ be the same length. Recycling notation, denote this row by $Y_{\widetilde{12}}$. Let $D$ denote the set of all divisors of $M$ less than or equal to 12, and let $\tilde{D}=\{\tilde{d}\mid d\in D\}$. Since the matrix $Y$ is (up to row permutations) an upper triangular matrix we can apply row operations to $Y_{\widetilde{12}}$ using rows $Y_{\tilde{j}}$, where $j\in D\setminus\{12\}$. We may also focus our attention only on the submatrix of entries where $(s,d)\in \tilde{D}\times D$; by Lemma \ref{lem:independence_of_sd}, this submatrix is uniquely determined by the set $D$. By brute force, we were able to verify for all 32 cases, which depend on the divisibility of $M$, that $Y_{\widetilde{12}}$ has a pivot at its $d=12$ entry.
\end{enumerate}

Therefore, with the addition of these 3 equations to $F$ and to $Y$, we have that $F'$ and $Y'$ have full rank. This proves Theorem \ref{thm:distinguishing_scrape_closures}.

\subsection{Badness and goodness}

For each $s\mid M$, recall the contravariant definition of a scrape $\m^s:=\m_{M/s}$, so that $\lcm(\m^s)=s$. Also, let $\ol{\m}^s$ denote the closure of $\m^s$. For the subsection, suppose $\m\neq\n$ are both good.

\begin{lemma}\label{lem:good_distinguishing_scrapes}
    Let $g,h$ be distinct primes dividing $M$ such that $h^b\mid\mid M$. Suppose the following scrapes are bad \begin{align}
        \begin{split}\label{eqn:gh_scrapes}
            \m^{s} &= (g^ih^j,g^{i+1}h^b,1^{(k-2)})\\
            \n^{s} &= (g^ih^b,g^{i+1}h^j,1^{(k-2)}),
        \end{split}
    \end{align} for $s=g^{i+1}h^b$, $0\leq j<b$, and where $i\geq0$ is chosen as the largest such power of $g$ satisfying $\m^{g^{i}h^b}=\n^{g^{i}h^b}$. Then there exists a prime $\ell\not\in\{g,h\}$ such that either

    \begin{enumerate}
        \item $\chi(\ol{\m}^{s\ell})\neq\chi(\ol{\n}^{s\ell})$ and at least one of $\ol{m}^{s\ell},\ol{\n}^{s\ell}$ is good, or
        \item exactly one of $\ol{m}^{s\ell}$ and $\ol{\n}^{s\ell}$ is bad, and exactly one is good.
    \end{enumerate}
\end{lemma}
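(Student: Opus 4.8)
The plan is to analyze what kind of multiset $\ol{\m}^{s\ell}$ (and $\ol{\n}^{s\ell}$) is, for a suitably chosen prime $\ell\notin\{g,h\}$ dividing $M$, by starting from the explicit description of $\m^s$ and $\n^s$ in \eqref{eqn:gh_scrapes} and incrementing the scrape by $\ell$. First I would record that passing from $\m^s$ to $\m^{s\ell}$ is the contravariant analogue of the incremental-scrape formula in Proposition \ref{prop:scrape_properties}(2)--(3): multiplying the scrape parameter by a new prime $\ell$ can only shrink the cone orders, and it does so precisely at those entries whose $\ell$-valuation equals $v_\ell(\lcm(\m^s))=v_\ell(s)=v_\ell(M)$ (since $g,h\neq\ell$ means $s$ already carries the full $\ell$-part of $M$). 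So the effect of the $\ell$-increment on the two-element ``support'' $\{g^ih^j,g^{i+1}h^b\}$ versus $\{g^ih^b,g^{i+1}h^j\}$ depends only on the $\ell$-valuations of those four numbers — which are zero — hence on the $\ell$-valuations of the \emph{other} entries of $\m$ and $\n$ packed into the $1^{(k-2)}$ slots. The key observation is that since $\m\neq\n$ are both good while the displayed scrapes are bad, there must be a prime $\ell$ witnessing a genuine difference: either $\m$ and $\n$ differ at $\ell$ (which, via Proposition \ref{prop:scrape_properties}(5) and the assumption $\De(\m)^{\Ab}\cong\De(\n)^{\Ab}$ carried over from Theorem \ref{thm:distinguishing_scrape_closures}, is constrained), or the goodness/badness changes when one more prime is scraped away.

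Next I would split into cases according to whether, after the $\ell$-increment, the support entries $g^ih^j$ and $g^{i+1}h^b$ (resp.\ $g^ih^b$ and $g^{i+1}h^j$) survive intact, get divided by $\ell$ — they cannot, since they have no $\ell$-factor — or get joined by a new nontrivial entry coming from some $m_u$ with $v_\ell(m_u)=v_\ell(M)$ whose $g$- and $h$-parts were previously absorbed. In the generic subcase the two-element support is unchanged, so $\m^{s\ell}$ still has support $(g^ih^j,g^{i+1}h^b)$ up to ones; applying the closure, the entry $g^ih^j$ with $j<b$ either stays (if $g^ih^j\ge 3$, or $=2$ with $3\nmid m_u$, etc.) or gets bumped to $2$ or $3$ per Definition \ref{def:closure}, and I would simply check the finitely many small-exponent possibilities $(i,j)\in\{(0,0)\}\cup\{i\ge1\}\times\{j<b\}$ to see that $\ol{\m}^{s\ell}$ and $\ol{\n}^{s\ell}$ are either both good with unequal Euler characteristics, or of opposite type — this is alternative (1) or (2). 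In the non-generic subcase a third nontrivial entry appears in one or both scrapes; here the presence of a third cone order automatically makes that scrape \emph{good}, and I would compare: if it appears in exactly one of them we land in alternative (2) directly; if in both, I reduce to a three-element comparison and invoke the maximality of $i$ (chosen largest with $\m^{g^ih^b}=\n^{g^ih^b}$) together with $\chi(\m)=\chi(\n)$ to force $\chi(\ol{\m}^{s\ell})\neq\chi(\ol{\n}^{s\ell})$, giving alternative (1).

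The main obstacle I anticipate is the bookkeeping around the closure operator interacting with the prime $2$ and $3$: the badness of a two-element multiset $(x,y)$ means $x\neq y$, but after taking closures two distinct entries like $(1,2)\mapsto(2,3)$ or $(2,m)\mapsto(3,m')$ can collapse in a way that changes both the goodness and the Euler characteristic unpredictably, so I will need to handle $\ell=2$, $\ell=3$, and $\ell\ge5$ as genuinely separate cases and track which of $2,3$ actually divide the ambient $M$. A secondary subtlety is guaranteeing that \emph{some} admissible $\ell\notin\{g,h\}$ exists at all: if $M$ is divisible only by $g$ and $h$ then no such $\ell$ is available, so I would first dispose of that situation by noting that then $\m^s,\n^s$ are scrapes at $s=g^{i+1}h^b$ with $\lcm=g^ih^b$, all entries are $\{g,h\}$-numbers, and the badness forces a contradiction with $\m,\n$ good via a direct $\med_i$ count (Proposition \ref{prop:scrape_properties}(5)) — so the hypotheses of the lemma already presuppose a third prime divides $M$. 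Once existence is secured, the rest is a finite verification, and I would be prepared to lean on GAP, as the authors did for Theorem \ref{thm:distinguishing_scrapes}, to confirm the handful of closure cases.
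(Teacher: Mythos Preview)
Your high-level plan---secure a third prime $\ell\mid M$ with $\ell\notin\{g,h\}$ (arguing that otherwise $\m,\n$ are forced to be bad or equal), pass to $\m^{s\ell},\n^{s\ell}$, and finish by a finite computer check---is exactly the paper's approach. But your analysis of the increment is in the wrong direction. In the contravariant notation $\m^t=\m_{M/t}$ one has $\lcm(\m^t)=t$, so going from $\m^s$ to $\m^{s\ell}$ \emph{raises} the $\lcm$ from $s=g^{i+1}h^b$ to $s\ell$; by the reversed-increment formula in Proposition~\ref{prop:scrape_properties}(3) entries get \emph{multiplied} by $\ell$, never divided. Your parenthetical ``$v_\ell(s)=v_\ell(M)$ since $s$ already carries the full $\ell$-part of $M$'' is simply false: $s=g^{i+1}h^b$ has $v_\ell(s)=0$. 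Consequently your ``generic subcase'' with unchanged two-element support is vacuous---at least one entry must pick up an $\ell$-factor for the $\lcm$ to become $s\ell$---and the trichotomy ``survive intact / get divided / get joined'' is not the right case split.

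The paper's correct parametrization is
\[
\m^{s\ell}=(\ell^{\delta_1}g^ih^j,\ \ell^{\delta_2}g^{i+1}h^b,\ \ell^{\delta_3},\ \ell^{\delta_4},\ \x),\qquad
\n^{s\ell}=(\ell^{\eps_1}g^ih^b,\ \ell^{\eps_2}g^{i+1}h^j,\ \ell^{\eps_3},\ \ell^{\eps_4},\ \x),
\]
with $\delta_q,\eps_r\in\{0,1\}$, $\sum\delta_q=\sum\eps_r$, and $\x$ containing only $1$'s and $\ell$'s common to both; one arranges $\delta_3=1$ by choosing $\ell\mid m_3$. The CAS verification then runs over these binary tuples to confirm $\chi(\ol{\m}^{s\ell})\neq\chi(\ol{\n}^{s\ell})$ symbolically. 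Goodness is handled separately: when $(i,j)\neq(0,0)$ the entry $\ell^{\delta_3}=\ell$ in slot~3 already supplies a third nontrivial cone order, while when $(i,j)=(0,0)$ both scrapes can still be bad for a particular $\ell$, and one may have to replace $\ell$ by another prime $p\nmid s$ to force $\n^{sp}$ good (using that $\n$ itself is good). Your closure bookkeeping worries about $\ell\in\{2,3\}$ are moot, since by hypothesis $\ell\notin\{g,h\}$ and the only small-prime interaction is through Definition~\ref{def:closure} on the $1$'s, which the binary parametrization already absorbs.
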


\begin{proof}
    Fix the order of the elements of $\m=(m_1,m_2,m_3,\dots,m_k)$ and $\n=(n_1,n_2,n_3,\dots,n_k)$ so that component-wise $\m^{s}\mid\m$ and $\n^{s}\mid\n$. There exists some prime $\ell\mid M$ which are distinct from $g,h$; if no such prime $\ell$ exists, then $\m$ and $\n$ would both either have to be bad or equal to each other, contrary to our assumption. In particular, we can assume, say, $\ell\mid m_3$. By Proposition \ref{prop:scrape_properties}, we can write \begin{align}\label{eqn:scrape_candidates}
        \begin{split}
            \m^{s\ell} &= (\ell^{\delta_1}g^ih^j,\ \ell^{\delta_2}g^{i+1}h^b,\ \ell^{\delta_3},\ \ell^{\delta_4},\ \x),\\
            \n^{s\ell} &= (\ell^{\eps_1}g^{i}h^b,\ \ell^{\eps_2}g^{i+1}h^j,\ \ell^{\eps_3},\ \ell^{\eps_4},\ \x),
        \end{split}
    \end{align} where $\x$ contains only the elements $1$ and $\ell$ and are common to each of the multisets. We also have that $\delta_q,\eps_r\in\{0,1\}$, $\sum\delta_q=\sum\eps_r$, and $\delta_3=1$ (since $\ell\mid m_3$). To this end, we used Mathematica \cite{mathematica} to run each of the viable cases for the values of $\{\delta_q,\eps_r\}$ to confirm symbolically that $\chi(\ol{m}^{s\ell})-\chi(\ol{n}^{s\ell})$ cannot be equal to zero. It remains to prove that at least one of $\m^{s\ell}$ or $\n^{s\ell}$ is good for some choice of $\ell\nmid s$. If $(i,j)\neq(0,0)$, then the goodness follows since we have that $\delta_3=1$. On the other hand, if $i=j=0$, the possibilities where both $\m^{s\ell}$ and $\n^{s\ell}$ are both bad occur precisely $$\m^{s\ell}=(1,\ \ell^{\delta_2}gh^b,\ \ell,\ 1^{(k-3)}),\quad \n^{s\ell}=(\ell^{\eps_1}h^b,\ \ell^{\eps_2}g,\ 1,\ 1^{(k-3)}),$$ where $\delta_2+1=\eps_1+\eps_2$. Suppose for each choice of prime $\ell\nmid s$, we have that $\n^{s\ell}$ is bad. This would erroneously imply that $\n$ is bad to begin with, and so there exists some prime $p\nmid s$ such that $\n^{sp}$ is good. Replacing the role of $\ell$ with $p$ gives our desired result.
\end{proof}

\begin{thm}\label{thm:good_distinguishing_scrapeClosures}
    There exists $t\mid M$ such that either \begin{enumerate}
        \item $\chi(\ol{\m}^t)\neq\chi(\ol{\n}^t)$ and at least one of $\ol{\m}^t,\ol{\n}^t$ is good.
        \item Exactly one of $\ol{\m}^t,\ol{\n}^t$ is good, and exactly one is bad.
    \end{enumerate}
\end{thm}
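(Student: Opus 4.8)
The plan is to reduce to the situation handled by Lemma \ref{lem:good_distinguishing_scrapes}. First I would dispose of the easy case: if $\chi(\ol{\m}^M) \ne \chi(\ol{\n}^M)$ — equivalently $\chi(\ol{\m}) \ne \chi(\ol{\n})$ — then since $\m,\n$ are both good by hypothesis, their closures equal themselves and $t = M$ works immediately in alternative (1). So from now on assume $\chi(\ol{\m}^M) = \chi(\ol{\n}^M)$; combined with $\De(\m)^{\Ab} \cong \De(\n)^{\Ab}$ and $\chi(\m) = \chi(\n)$ (which one may assume, else an abelianization or Euler-characteristic argument from the earlier sections already separates them), Theorem \ref{thm:distinguishing_scrape_closures} furnishes some $s_0 \mid M$ with $\chi(\ol{\m_{s_0}}) \ne \chi(\ol{\n_{s_0}})$. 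Translating to contravariant scrapes, there is $t_0 = M/s_0 \mid M$ with $\chi(\ol{\m}^{t_0}) \ne \chi(\ol{\n}^{t_0})$.

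Next I would choose $t_0$ \emph{minimal} (with respect to divisibility) among all divisors $t \mid M$ for which $\chi(\ol{\m}^t) \ne \chi(\ol{\n}^t)$. If $\ol{\m}^{t_0}$ or $\ol{\n}^{t_0}$ is good we are done via alternative (1); likewise if exactly one is bad and one is good we land in alternative (2). So the only remaining case is that \emph{both} $\ol{\m}^{t_0}$ and $\ol{\n}^{t_0}$ are bad. Since $\ol{\cdot}$ replaces small conepoints by $2$ or $3$, a bad closure means the underlying scrape $\m^{t_0}$ has at most one conepoint that is $\geq 2$ after stripping off factors of $2$ and $3$; one then analyzes the shape of $\m^{t_0}$ and $\n^{t_0}$ directly. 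Writing out what ``bad'' forces — namely $|\m^{t_0}|_{\ge 2} = 1$, or two conepoints differing — and using $\lcm(\m^{t_0}) = \lcm(\n^{t_0}) = t_0$ together with the Euler-characteristic and abelianization constraints that descend to scrapes (Proposition \ref{prop:scrape_properties}(5)), one should be able to show $\m^{t_0}$ and $\n^{t_0}$ have precisely the form displayed in \eqref{eqn:gh_scrapes}, for a pair of primes $g,h$ dividing $t_0$, with $i$ chosen maximal as in the lemma's hypothesis. (The minimality of $t_0$ is what pins down the exponents and guarantees agreement of the scrapes just below $s = g^{i+1}h^b$.)

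Having matched hypotheses, I would apply Lemma \ref{lem:good_distinguishing_scrapes}: it produces a prime $\ell \notin \{g,h\}$ so that $t := t_0 \ell$ satisfies either $\chi(\ol{\m}^{t}) \ne \chi(\ol{\n}^{t})$ with one of $\ol{\m}^t, \ol{\n}^t$ good, or exactly one of $\ol{\m}^t, \ol{\n}^t$ bad and one good — which is exactly conclusion (1) or (2). The main obstacle I anticipate is the bookkeeping in the reduction step: showing that ``both closures bad'' really does force the rigid two-prime shape of \eqref{eqn:gh_scrapes} and not some other degenerate configuration (e.g. involving the primes $2$ or $3$ themselves, where closures behave nontrivially). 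One has to carefully enumerate how a bad good-orbifold scrape can arise from a good $\m$, track which of the three closure substitutions ($1\mapsto 2$, $1\mapsto 3$, $2\mapsto 3$) can occur in the relevant coordinates, and confirm the exponent $i$ is well-defined and the ``$1^{(k-2)}$'' tail is genuinely common to both — this is where a case split (is $2 \in \{g,h\}$? is $3$?) is unavoidable and is likely best corralled by the same computer-algebra verification already invoked in the lemma.
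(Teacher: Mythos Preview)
Your overall architecture matches the paper's: invoke Theorem \ref{thm:distinguishing_scrape_closures} to get some $s$ with $\chi(\ol{\m}^s)\neq\chi(\ol{\n}^s)$, assume both closures are bad, and reduce to the two-prime configuration of Lemma \ref{lem:good_distinguishing_scrapes}. The divergence is in how that reduction is carried out, and your minimality device leaves a genuine gap.

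The paper does not use minimality. From badness of $\ol{\m}^s=(a,b,1^{(k-2)})$ and $\ol{\n}^s=(c,d,1^{(k-2)})$ it first extracts the structural fact that $2,3\nmid m_i,n_j$ for all $i,j\geq 3$ (otherwise the closure would have replaced those tail $1$'s by $2$ or $3$). It then passes to the \emph{plain} scrapes $\m^{ds},\n^{ds}$ for each $d\in\{1,2,3,6\}$ with $d\nmid s$; by the coprimality-to-$6$ observation these remain bad, since multiplying the level by $2$, $3$, or $6$ can only affect the first two coordinates. Now either some $\chi(\m^{ds})\neq\chi(\n^{ds})$, in which case one reads off the form \eqref{eqn:gh_scrapes} and applies the lemma, or all these plain Euler characteristics agree, in which case Corollary \ref{cor:lessthan2} (the $|\cvec|\leq 2$ case) forces $\m^{ds}=\n^{ds}$ for every such $d$, and transitivity of scrapes (Proposition \ref{prop:scrape_properties}) then contradicts $\chi(\ol{\m}^s)\neq\chi(\ol{\n}^s)$.

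Your proposal replaces this with ``take $t_0$ divisibility-minimal for the closure inequality and argue the shape \eqref{eqn:gh_scrapes} directly.'' The problem is that minimality is taken with respect to the \emph{closure} condition, and closures are many-to-one: equality of $\chi(\ol{\m}^{t'})$ and $\chi(\ol{\n}^{t'})$ at proper divisors $t'\mid t_0$ does not yield $\m^{t'}=\n^{t'}$, so nothing forces $t_0$ to be a two-prime-power number or the scrapes to sit in the rigid form the lemma requires. The paper's $d\in\{1,2,3,6\}$ manoeuvre is exactly what converts the closure problem back into a plain-scrape problem where Corollary \ref{cor:lessthan2} bites; absent that step (or a substitute of comparable strength), I do not see how minimality alone delivers \eqref{eqn:gh_scrapes}, and deferring to a computer check here would be checking an infinite family rather than the finite symbolic verification used inside the lemma.
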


\begin{proof}
    By Theorem \ref{thm:distinguishing_scrape_closures}, there is some $s\mid M$ such that $\chi(\ol{\m}^s)\neq\chi(\ol{\n}^s)$, but suppose both $\ol{\m}^s=(a,b,1^{(k-2)})$ and $\ol{\n}^s=(c,d,1^{(k-2)})$ are bad. Let us fix the order of the elements of each so that component-wise $\ol{\m}^s\mid\m=(m_1,m_2,\dots,m_k)$ and $\ol{\n}^s\mid\n=(n_1,n_2,\dots,n_k)$. By the definition of closures, we have $2,3\nmid m_i,n_j$ for each $i,j\geq3$.

    Whenever they exist, consider the scrapes $\m^{ds}$ and $\n^{ds}$, where $d\in\{1,2,3,6\}$, if $d\nmid s$. It must be that $\m^{ds}$ and $\n^{ds}$ remain bad. If $\chi(\m^{ds})\neq\chi(\n^{ds})$, then there are distinct primes $g,h\mid M$ and $i,j,b\geq0$ such that $\m^{g^{i+1}h^b}$ and $\n^{g^{i+1}h^b}$ take on the form of (\ref{eqn:gh_scrapes}), possibly with the roles of $\m$ and $\n$ swapped. Then the result follows from Lemma \ref{lem:good_distinguishing_scrapes}.
    
    Otherwise, we assume that for each $d\in\{1,2,3,6\}$, whenever they exist, that $\chi(\m^{ds})=\chi(\n^{ds})$ and so $\m^{ds}=\n^{ds}$ by Corollary \ref{cor:lessthan2}. However, by the transitivity of scrapes, see Proposition \ref{prop:scrape_properties}, this situation and having $\chi(\ol{m}^s)\neq\chi(\ol{n}^s)$ cannot both occur.
\end{proof}

\subsection{Existence of maximally smooth maps with prescribed factors}

Important for applying Theorem \ref{thm:macbeath}, we now prove the following existence theorem.

\begin{thm}\label{thm:maximal_reps}
    Suppose $|\m|\geq3$ is good. There is an odd prime power $q$ such that if $\x$ is a scrape of $\m$, then there is an $\ol{\x}$-maximally smooth representation of $\De(\m)$ to $\PSL(2,q)$.
\end{thm}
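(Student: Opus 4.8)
Throughout write $M=\lcm(\m)$. The overall strategy is to fix one odd prime power $q$ by the remark following Theorem~\ref{thm:macbeath}, and then, for each scrape $\x=\m_{s}$, to realize the closure $\ol{\x}$ \emph{not} as the smoothness type of a surjection onto all of $\PSL(2,q)$ — such a surjection is automatically $\m$-smooth, hence too smooth for a proper scrape — but as the smoothness type of a surjection onto a subgroup $Q\le\PSL(2,q)$ whose set of element orders is calibrated to $\ol{\x}$. We may assume $\ol{\x}$ is good, i.e.\ has at least three nontrivial entries or exactly two equal ones: when $\ol{\x}$ has at most two nontrivial entries that are unequal, no $\ol{\x}$-smooth quotient exists in any group, and such scrapes never arise in the applications of the theorem (compare Theorems~\ref{thm:distinguishing_scrape_closures} and~\ref{thm:good_distinguishing_scrapeClosures}); the case of exactly two equal nontrivial entries is handled directly with a cyclic image.

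For the choice of $q$: pick a prime $\ell$ coprime to $6M$ and put $q=\ell^{e}$ for a suitably composite $e$. We want $2M\mid q-1$ and, more to the point, we want that for every divisor $d=\lcm(\ol{\m_{s}})$ arising from a scrape there is a subfield degree $e_{d}\mid e$ with $q_{d}:=\ell^{e_{d}}$ obeying $v_{p}(\tfrac{q_{d}-1}{2})=v_{p}(d)$ for each prime $p\mid M$ with $v_{p}(d)<v_{p}(M)$, $v_{p}(\tfrac{q_{d}-1}{2})\ge v_{p}(M)$ otherwise, and $\tfrac{q_{d}+1}{2}$ coprime to the odd part of $M$. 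These conditions force $\gcd(m_{i},\tfrac{q_{d}-1}{2})=\gcd(m_{i},d)$ and $\gcd(m_{i},\tfrac{q_{d}+1}{2})\le\gcd(m_{i},d)$ for all $i$, so that inside $\PSL(2,q_{d})$ the largest divisor of $m_{i}$ occurring as an element order is exactly $\gcd(m_{i},d)$. Engineering a single $\ell$ whose powers realize all of these residue patterns at once — over the finitely many $d$, and also over the ``bump'' divisors forced by the closure when $2\mid M$ or $3\mid M$, where the clean subfield picture collides with itself (for example a bump to $3$ together with $6\mid m_{i}$, which would spuriously place an order-$6$ element dividing $m_{i}$ into $\PSL(2,q_{d})$, so that a dihedral $\D_{2d}$ or an exceptional $A_{4},S_{4},A_{5}$ must be substituted) — is the heart of the proof and the step I expect to fight with; one argues by CRT on $(\Z/M^{2}\Z)^{\times}$, and all of $\D_{2d}$, $A_{4}$, $S_{4}$, $A_{5}$ embed in $\PSL(2,q)$ because the forced divisibilities ($8\mid M\Rightarrow8\mid q-1$, $5\mid M\Rightarrow5\mid q-1$, and so on) guarantee it. This is prime-by-prime bookkeeping on $2$- and $3$-adic valuations and is where the computer assistance cited elsewhere in the paper enters.

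Granting this, fix $\x=\m_{s}$ with closure $\ol{\x}$ and let $Q$ be the attached group. For existence of the $\ol{\x}$-smooth representation: a $\ol{\x}$-smooth map $\De(\m)\to Q$ is the same thing as a smooth map $\De(\ol{\x})\to Q$ (quotient out the relations $x_{i}^{(\ol{\x})_{i}}=1$), and $\ol{\x}$ has at least three nontrivial entries when $\ol{\x}$ is good and not of cyclic type; so Theorem~\ref{thm:macbeath} applies and, since each $(\ol{\x})_{i}$ divides one of $\ell,\tfrac{q_{d}-1}{2},\tfrac{q_{d}+1}{2}$ by the choice of $q_{d}$, yields a smooth surjection $\De(\ol{\x})\twoheadrightarrow\PSL(2,q_{d})$. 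When instead $Q$ is dihedral, one adapts the construction in the proof of Theorem~\ref{thm:smooth_dihedral} from genus $1$ to genus $0$ by sending most $x_{i}$ to rotations $r^{\,d/(\ol{\x})_{i}}$ and an even number of them to reflections to make $x_{1}\cdots x_{k}=1$, passing to $\D_{4d}$ for parity room; when $Q\in\{A_{4},S_{4},A_{5}\}$ one exhibits the generating tuples directly.

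Maximality is then formal. If $Q$ is a $\cvec'$-smooth quotient of $\De(\m)$, then $Q$ carries a generating $k$-tuple with trivial product whose $i$-th entry has order $c'_{i}\mid m_{i}$; that order is a divisor of $m_{i}$ realized in $Q$, hence $c'_{i}\le(\ol{\x})_{i}$ by the calibration of $Q$, and therefore $\chi(\cvec')=2-\sum_{i}(1-\tfrac1{c'_{i}})\ge 2-\sum_{i}(1-\tfrac1{(\ol{\x})_{i}})=\chi(\ol{\x})$. So $\ol{\x}$ attains the least Euler characteristic among the factors for which $Q$ is smooth, i.e.\ the surjection onto $Q$ is $\ol{\x}$-maximally smooth, proving Theorem~\ref{thm:maximal_reps}. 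The single genuine obstacle remains the coordination in the second paragraph: fitting, simultaneously over all scrapes of $\m$, a correctly calibrated subgroup inside one fixed $\PSL(2,q)$.
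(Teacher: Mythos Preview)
Your approach diverges from the paper's, and the gap you flag in your second paragraph is real --- but it is one you created by taking the quantifier order in the statement at face value. In the paper, $q$ is produced \emph{for the given scrape} $\x$: with $X=\lcm(\x)$, Lemma~\ref{lem:incongruences} manufactures an odd prime power $q$ satisfying $q\equiv1\pmod{2X}$ together with a finite list of \emph{in}congruences $q\not\equiv\pm1\pmod{2p^{\,v_p(X)+1}}$ for $p\in P_1$ and $q\not\equiv\pm1\pmod{4p}$, $q\not\equiv\pm1\pmod{6p}$ for $p\in P_2,P_3$. These are solved by CRT and Dirichlet. The congruence gives existence of an $\ol{\x}$-smooth map via Theorem~\ref{thm:macbeath}; the incongruences guarantee that $\PSL(2,q)$ itself contains no element of order $p^{\,v_p(X)+1}$ for $p\in P_1$, and none of order $2p$ or $3p$ for the relevant $p$. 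Maximality is then a short case split: if some $\cvec$-smooth map into $\PSL(2,q)$ has $\cvec\nmid\x$, the incongruences force every prime witnessing $v_p(\lcm\cvec)>v_p(X)$ to lie in $\{2,3\}$, and the three cases $P_C=\{2\},\{3\},\{2,3\}$ are dispatched directly from the definition of the closure. No componentwise bound $c_i'\le(\ol{\x})_i$ is asserted or required; only $\chi(\ol{\x})\le\chi(\cvec)$.

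Your plan --- one ambient $q$, with a subgroup $Q\le\PSL(2,q)$ calibrated to each scrape so that the largest divisor of $m_i$ realized in $Q$ equals $(\ol{\x})_i$ --- asks for strictly more than maximality needs, and it can fail. Take $\m=(6,10,15)$ and $\x=\m_6=(1,5,5)$, so $\ol{\x}=(3,5,5)$ and $\lcm(\ol{\x})=15$. Your subfield recipe wants $15\mid\tfrac{q_d-1}{2}$, which inserts an element of order $15=m_3$ into $\PSL(2,q_d)$ and kills maximality. The paper's incongruence $q\not\equiv\pm1\pmod{30}$ (coming from $5\in P_3$) blocks exactly this. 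Your fallback to $A_5$ happens to salvage this particular example, but you give no systematic rule deciding when and how to swap in a dihedral or exceptional subgroup, and you would need one that works uniformly as the closure interacts with several $m_i$ at once. The paper sidesteps your ``single genuine obstacle'' entirely by encoding the necessary order restrictions into $q$ itself, scrape by scrape, via Lemma~\ref{lem:incongruences}; the closure is then precisely the bookkeeping device that absorbs the residual slack at the primes $2$ and $3$.
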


We set up notation for producing such a prime power $q$. Let $X:=\lcm(\x)$ and $M:=\lcm(\m)$, so that $X\mid M$. Define the following subsets of prime divisors of $M$, not counting multiplicity. \begin{align*}
    P &:= \{p \text{ prime}\mid v_p(X)<v_p(M)\},\\
    P_1 &:=\begin{cases}
        P\setminus\{2,3\} & \text{if } 2\nmid X,\ 3\nmid X\\
        P\setminus\{2\} & \text{else if } 2\nmid X\\
        P\setminus\{3\} & \text{else if } 3\nmid X\\
        P & \text{else},
    \end{cases}\\
    P_2 &:=\{\ell \text{ prime}\colon2\ell\mid M\}, \text{ whenever } 2\nmid X \text{ and }v_2(X)<v_2(M),\\
    P_3 &:=\{\ell \text{ prime}\colon3\ell\mid M\}, \text{ whenever } 3\nmid X \text{ and }v_3(X)<v_3(M).
\end{align*}

\begin{lemma}\label{lem:incongruences}
    There exists an odd prime power $q>1$ satisfying the system of congruences and incongruences \begin{align}
        q &\equiv1\pmod{2X},\label{Ncong}\\
        q &\not\equiv\pm1\pmod{2p^{v_p(X)+1}},\text{ for each $p\in P_1$},\label{P1}\\
        q &\not\equiv\pm1\pmod{2\cdot(2p)},\text{ for each $p\in P_2$},\text{ and}\label{P2}\\
        q &\not\equiv\pm1\pmod{2\cdot(3p)},\text{ for each $p\in P_3$}.\label{P3}
    \end{align}
\end{lemma}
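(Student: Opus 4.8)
The plan is to construct $q$ as a prime (not merely a prime power) in a suitable arithmetic progression, using Dirichlet's theorem on primes in arithmetic progressions together with the Chinese Remainder Theorem to reconcile the single congruence condition \eqref{Ncong} with the finitely many incongruence conditions \eqref{P1}--\eqref{P3}. First I would set $N_0 := 2X$ and observe that \eqref{Ncong} already forces $q \equiv 1 \pmod{N_0}$; in particular, for any prime $p$ with $v_p(X) \geq 1$, reduction modulo the relevant power of $p$ is already pinned down, so the incongruences in \eqref{P1}--\eqref{P3} only impose genuine constraints at primes $p$ (or auxiliary moduli $p^{v_p(X)+1}$, $2p$, $3p$) whose $p$-part is \emph{not} already determined by $X$. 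That is exactly why the sets $P_1, P_2, P_3$ are defined to exclude primes where $X$ and $M$ agree: at such primes there is ``room'' in the residue, and one must choose that room to avoid $\pm 1$.

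The key steps, in order: (i) For each prime $p \in P_1$, we have $v_p(X) < v_p(M)$, so modulo $p^{v_p(X)+1}$ the value $q \equiv 1 \pmod{p^{v_p(X)}}$ is fixed but the next digit is free; choose a residue $r_p \bmod 2p^{v_p(X)+1}$ that is $\equiv 1 \pmod{2p^{v_p(X)}}$ (compatibly with \eqref{Ncong}), is odd, and is $\not\equiv \pm 1 \pmod{2p^{v_p(X)+1}}$. Such a residue exists because there are $\varphi$-many lifts and, since $p$ is odd and $p^{v_p(X)+1} \geq 3$ (or if $p=2$ then $v_2(X)+1 \geq 2$ is handled by $P_1$ only including $2$ when $2\mid X$, so again the modulus is at least $8$), the two forbidden residues $\pm 1$ are a strict subset. (ii) For each $p \in P_2$, choose a residue $r_p' \bmod 2\cdot(2p)$ that is odd, $\equiv 1 \pmod{2X}$-compatible, and $\not\equiv \pm 1 \pmod{2\cdot(2p)}$; similarly for $P_3$ with modulus $2\cdot(3p)$. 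Again the modulus $4p$ (resp.\ $6p$) has more than two odd residues coprime to it, so avoidance is possible, and since $2 \nmid X$ in the $P_2$ case (resp.\ $3 \nmid X$ in the $P_3$ case) these moduli are coprime to the part of $2X$ already fixed, so there is no conflict. (iii) Assemble all these local choices via CRT into a single residue $r$ modulo $D := \operatorname{lcm}$ of $2X$ and all the auxiliary moduli; by construction $\gcd(r, D) = 1$. (iv) Apply Dirichlet's theorem to obtain infinitely many primes $q \equiv r \pmod D$; any such $q$ is odd (since $r$ is odd), exceeds $1$, and satisfies all of \eqref{Ncong}--\eqref{P3} because each condition is detected by $q \bmod D$.

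The main obstacle — and the only place requiring care — is step (i)/(ii): verifying that at each prime in $P_1, P_2, P_3$ there genuinely is a residue avoiding $\pm 1$ while remaining compatible with \eqref{Ncong}. The compatibility is automatic because the $P_i$ are defined precisely so that the constrained modulus strictly refines what $2X$ already fixes, leaving at least $p - 1 \geq 2$ choices for the freed digit; and among $\geq 2$ choices one cannot hit both $+1$ and $-1$ unless $p = 2$, the one case where $\pm 1$ coincide — but then $v_2(X) \geq 1$ (as $2 \in P_1$ only when $2 \mid X$), so modulo $2^{v_2(X)+1} \geq 4$ there are $\geq 2$ odd residues $\equiv 1 \pmod{2^{v_2(X)}}$, namely $1$ and $1 + 2^{v_2(X)}$, and the latter is $\not\equiv 1$ and (being $\geq 3$ and odd) $\not\equiv -1$ unless $2^{v_2(X)+1} = 4$, i.e.\ $v_2(X) = 1$, where $1 + 2 = 3 \equiv -1 \pmod 4$ — but in that sub-case $P_2$ is vacuous and one simply needs $q \equiv 1 \pmod 4$ from \eqref{Ncong} directly, no further constraint at $2$. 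Tracking these small-prime edge cases is the fiddly part; everything else is a routine CRT-plus-Dirichlet packaging argument. The final $q$ produced will then feed into Theorem \ref{thm:macbeath} to realize the prescribed factor $\ol{\x}$ as the exact orders of elliptic generators in $\PSL(2,q)$.
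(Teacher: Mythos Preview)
Your strategy matches the paper's exactly: parameterize candidates modulo $D=\operatorname{lcm}(2X,\{*_p\})$, show that at each relevant prime there is a residue compatible with \eqref{Ncong} that avoids $\pm 1$ and is coprime to $p$, combine via CRT, and invoke Dirichlet. The paper carries this out by writing candidates as $1+k\cdot 2X$ and solving for $k$ via CRT on auxiliary congruences $k\equiv k_p$, which is just a reparameterization of your plan.

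That said, your edge-case bookkeeping has two concrete slips. First, at $p=2$ in $P_1$: the modulus in \eqref{P1} is $2\cdot 2^{v_2(X)+1}=2^{v_2(X)+2}$, not $2^{v_2(X)+1}$, and \eqref{Ncong} already pins $q\equiv 1\pmod{2^{v_2(X)+1}}$ (since $v_2(2X)=v_2(X)+1$), not merely $\pmod{2^{v_2(X)}}$. With the correct moduli the two compatible lifts are $1$ and $1+2^{v_2(X)+1}$, and since $v_2(X)\geq 1$ the residue $-1$ is never among them; your phantom $v_2(X)=1$ sub-case and its resolution (``no further constraint at $2$'') both stem from this factor-of-two error. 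Second, the claim ``by construction $\gcd(r,D)=1$'' is not actually established: for odd $p\in P_1$ with $p\nmid X$, the residues modulo $2p$ compatible with \eqref{Ncong} are all of $1,3,\dots,2p-1$, and after discarding $\pm 1$ you must \emph{also} discard the one divisible by $p$; a survivor exists because such $p$ satisfy $p\geq 5$ (as $3\in P_1$ forces $3\mid X$), but you have not made that count, nor dealt with primes lying in more than one $P_i$. The paper's proof spends most of its effort precisely on this coprimality-plus-avoidance count (its ``$ip-1\geq 4$'' case and the residual $2\in P_2$ case). None of this breaks your method, but the local analysis needs to be redone with the correct moduli before the argument is complete.
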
 

\begin{proof}
    For $p\in P_i,\ i\in\{1,2,3\}$, let $*_p$ denote any of the moduli appearing in (\ref{P1}), (\ref{P2}), and (\ref{P3}). Define $L$ as the least common multiple of $2X$ and each $*_p$. Centering on (\ref{Ncong}), we can write solutions to the entire system of (in)congruences in the form $(1+x2X)+yL$, where $x,y\in\Z$.
    
    We claim that there is some $k\in\Z$ such that $q_y:=(1+k2X)+yL$ is a solution to (\ref{Ncong}), (\ref{P1}), (\ref{P2}), and (\ref{P3}), for each $y\geq0$. In addition, we show $k$ can be chosen so that $\gcd(1+k2X,L)=1$. Dirichlet's Theorem on primes in arithmetic progressions would then provide infinitely many (positive) primes in the form of $q_y$. 

    First, we produce integers $k_p$ for each $p\in P_i$, such that $\gcd(1+k_p2X,p)=1$ and $1+k_p2X\not\equiv\pm1\pmod{*_p}$ via the following procedure.

    \begin{enumerate}
        \item If $p\mid X$, then assume by contradiction that $1+2X\equiv\pm1\pmod{*_p}$. This implies $X\equiv0\pmod{\frac{1}{2}*_p}$ or $X\equiv-1\pmod{\frac{1}{2}*_p}$. The former case $X\equiv 0$ would lead to a false claim of either $2\mid X$, $3\mid X$, or $p^{v_p(X)+1}\nmid X$, depending on the membership of $p\in P_i$; whereas, the latter case $X\equiv-1$ would contradict that $p\mid X$. Hence, it must be that $1+2X\not\equiv\pm1\pmod{*_p}$. Therefore, when $p\mid X$, we may take $k_p=1$, so that $1+2X\not\equiv\pm1\pmod{*_p}$ and $\gcd(1+2X,p)=1$.\label{item:1}
        
        \item If $p\nmid X$, then $1+x2X\equiv1+y2X\pmod{*_p}$ if and only if $ip\mid(x-y)$. Therefore, $$1\not\in\{1+2X,1+4X,\dots,1+(ip-1)2X\}\pmod{*_p},$$ and this set contains all of the congruence classes of the form $1+x2X$, excluding 1. There are exactly $ip-1$ such classes modulo $*_p$, for $i\in\{1,2,3\}$.
        
        Suppose $ip-1\geq4$. Notice that if $p\mid(1+x2X)$, then $p\nmid(1+(x\pm1)2X)$ and $p\nmid(1+(x\pm2)2X)$. Hence, there exists $1\leq k_p\leq ip-1$ such that $1+k_p2X\not\equiv\pm1\pmod{*_p}$ and $\gcd(1+k_p2X,p)=1$.
        
        On the other hand, if $ip-1<4$, then we have $2\in P_1$, $3\in P_1$, or $2\in P_2$. The cases $2,3\in P_1$ would contradict that $p\nmid X$, so we may assume $2\in P_2$. Then, modulo $8=2\cdot4$, consider the 3 element set $\{1+2X,1+4X,1+6X\}$. Notice if $1+x2X\equiv-1\pmod{2\cdot4}$, then $1+(x\pm1)2X\not\equiv-1\pmod{2\cdot4}$. Hence, in this case, there exists $1\leq k_2\leq ip-1=3$ such that $1+k_22X\not\equiv\pm1\pmod{2\cdot4}$ and $\gcd(1+k_22X,2)=1$.

        Lastly, there is the possibility of $p$ belonging the intersection of more than one of the sets $P_1$, $P_2$, or $P_3$. For different $P_i$, our procedure may produce different $1\leq k_p\leq ip-1$ for a fixed $p$; however, we can just take the least such $k_p$.
    \end{enumerate}

    Notice if $p\in P_i$, then for every $j_p\in\Z$, $1+(k_p+j_p\cdot ip)2X\not\equiv\pm1\pmod{*_p}$ and $\gcd(1+(k_p+j_p\cdot ip)2X,p)=1$. Let $k\in\Z$ be a solution to the system of congruences $$k\equiv k_p\pmod{ip},\text{ for each } p\in P_i,$$ which is a consistent system by the Chinese remainder theorem. It then follows that $q:=1+k2X$ satisfies $\gcd(q,L)=1$, as well as (\ref{Ncong}), (\ref{P1}), (\ref{P2}), and (\ref{P3}), which finishes the proof.
\end{proof}

\begin{proof}[Proof of Theorem \ref{thm:maximal_reps}]
    By Theorem \ref{thm:macbeath}, the congruence (\ref{Ncong}) implies there is an $\ol{\x}$-smooth representation $\De(\m)\rightarrow\PSL(2,q)$. Now suppose $\vp\colon \De(\m)\twoheadrightarrow\PSL(2,q)$ is $\cvec$-smooth for some $\cvec\mid\m$. By the definition of $\ol{\x}$-maximality, we need to show that $\chi(\ol{\x})\leq\chi(\cvec)$. Define $C:=\lcm(\cvec),\ X:=\lcm(\x)$, and $M:=\lcm(\m)$.

    Notice if $\cvec\mid\x$, then $\chi(\ol{\x})\leq\chi(\x)\leq\chi(\cvec)$. So we can assume that $\cvec\nmid\x$. By the maximality of the scrape $\x$ as a factor of $\m$, we have $C\nmid X$ by Proposition \ref{prop:scrape_properties}. This implies there is a prime $p$ such that $v_p(X)<v_p(C)$, and in particular, $p\in P$. Define $P_C:=\{p\text{ prime}\mid v_p(X)<v_p(C)\}$, which is a nonempty subset of $P$.

    If there exists $p\in P_C$ such that $p\in P_1$, then (\ref{P1}) would erroneously imply that $\vp$ is not $\cvec$-smooth, since $p^{v_p(X)+1}\not\in\{2,3\}$ would divide some element of $\cvec$, but simultaneously it cannot appear as an order of an element in $\PSL(2,q)$. Hence, we may assume that each $p\in P_C$ satisfies $p\not\in P_1$. Since $P_C\subset P$, this can only occur when $\varnothing\neq P_C\subset\{2,3\}$. Consider the following 3 cases.
    
    \begin{caseof}
        \case{$P_C=\{2\}$}{This implies $2\nmid X$ and $2\mid C$. From (\ref{P2}), it follows that $\PSL(2,q)$ cannot contain elements of order $2p$ for any prime $p\mid C$. Consequently, for $\vp$ to be $\cvec$-smooth, all even elements of $\cvec$ must equal 2. Since $P_C=\{2\}$, we must also have $\frac{C}{2}\mid X$. Since $\x$ is a scrape of $\m$, we have that the 2-scrape $\cvec_{2}$ divides $\x$.

        Putting our findings together, we explicitly write $\cvec=(2^{(r+s)},c_1,\dots,c_t)$, $\cvec_2=(1^{(r+s)},c_1,\dots,c_t)$, which divides component-wise with $\x=(1^{(r)},y_1,\dots,y_s,x_1,\dots,x_t)$, where $c_j\neq2$, $y_i\neq1,2$ and $c_j\mid x_j$. It follows that the closure of $\x$ has the form $\ol{\x}=(2^{(r)},y_1,\dots,y_s,x_1',\dots,x_t')$, so that $\chi(\ol{\x})\leq\chi(\cvec)$.}

        \case{$P_C=\{3\}$}{This implies $3\nmid X$ and $3\mid C$. From (\ref{P3}), it follows that $\PSL(2,q)$ cannot contain elements of order $3p$ for any prime $p\mid C$. Consequently, for $\vp$ to be $\cvec$-smooth, all elements divisible by 3 in $\cvec$ must equal 3. Since $P_C=\{3\}$, we must also have $\frac{C}{3}\mid X$. Since $\x$ is a scrape of $\m$, we have that the 3-scrape $\cvec_{3}$ divides $\x$.

        Putting our findings together, we explicitly write $\cvec=(3^{(r+s)},c_1,\dots,c_t)$, $\cvec_3=(1^{(r+s)},c_1,\dots,c_t)$, which divides component-wise with $\x=(1^{(r)},y_1,\dots,y_s,x_1,\dots,x_t)$, where $c_j\neq3$, $y_i\neq1,3$ and $c_j\mid x_j$. It follows that the closure of $\x$ has the form $\ol{\x}=(3^{(r)},y_1',\dots,y_s',x_1',\dots,x_t')$. This time; however, if any $y_i\in\x$ is equal to 2, by the definition of closure, $y_i'=3\in\ol{\x}$. Hence, $\chi(\ol{\x})\leq\chi(\cvec)$.}

        \case{$P_C=\{2,3\}$}{This implies $2,3\nmid X$ and $6\mid C$. From (\ref{P2}), (\ref{P3}), it follows that $\PSL(2,q)$ can contain neither elements of order $2p$ nor $3p$ for any prime $p\mid C$. Consequently, for $\vp$ to be $\cvec$-smooth, all even elements of $\cvec$ must be equal to 2 and all elements divisible by 3 in $\cvec$ must equal 3. Since $P_C=\{2,3\}$, we must also have $\frac{C}{6}\mid X$. Since $\x$ is a scrape of $\m$, we have that the 6-scrape $\cvec_{6}$ divides $\x$.

        The explicit shapes of $\cvec$, $\cvec_6$, and $\x$ are the concatenations of the corresponding multisets provided in the previous two cases. The resulting shape of $\ol{\x}$ is also a concatenation of the former cases, and leads us to the same conclusion $\chi(\ol{x})\leq\chi(\cvec)$.}
    \end{caseof}

    Hence, $\vp$ is $\ol{\x}$-maximally smooth.
\end{proof}

Finally, the proof of Theorem \ref{thm:distinguishing_factors} follows from Theorems \ref{thm:distinguishing_scrape_closures}, \ref{thm:good_distinguishing_scrapeClosures}, and \ref{thm:maximal_reps}.

\section{Algorithm and effective upperbounds}

In this section, we provide an algorithm for distinguishing two non-isomorphic Fuchsian groups $\Ga$ and $\La$. For producing effective upperbounds on distinguishing finite quotients, we will require bounds on the odd prime powers $q>1$ obtained from Theorem \ref{thm:macbeath}. The following results from Linnik \cite{linnik} and improved by Xylouris \cite{xylouris} is an effective version of Dirichlet's theorem on primes in arithmetic progressions.

\begin{thm}\label{thm:linnik}
    Let $a$ and $D$ be coprime positive integers with $a < D$. Then there exists $k \geq 0$ such that $a+kD$ is prime, and if $p$ is the lowest such prime of this form, then $p<cD^5$, where $c$ is an effectively computable constant which is independent of the choices of $a$ and $D$.
\end{thm}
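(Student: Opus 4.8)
The plan is to follow Linnik's classical approach through the explicit formula for primes in arithmetic progressions, since producing a prime $p < cD^5$ with $p \equiv a \pmod D$ amounts to showing that $\psi(x;D,a) := \sum_{n \le x,\ n \equiv a\ (D)} \Lambda(n)$ is positive for some $x \asymp D^5$ (prime powers that are not prime contribute only $O(\sqrt{x}\log x)$, so they are harmless). First I would expand over Dirichlet characters modulo $D$, writing $\phi(D)\,\psi(x;D,a) = \sum_{\chi \bmod D} \bar\chi(a)\,\psi(x,\chi)$, and apply the explicit formula to each $\psi(x,\chi)$. The principal character contributes the main term $x$, so the whole problem reduces to bounding the zero sum $\sum_\rho x^\rho/\rho$, taken over all nontrivial zeros $\rho = \beta + i\gamma$ of all $L(s,\chi)$ with $\chi \bmod D$, by $o(x/\log x)$ uniformly for $x \ge cD^5$.

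The heart of the matter is Linnik's three principles on the zeros of Dirichlet $L$-functions. (i) A classical zero-free region: there is an effective $c_0>0$ such that $\prod_{\chi \bmod D} L(s,\chi)$ has no zero with $\beta > 1 - c_0/\log(D(|\gamma|+2))$, except possibly one real zero $\beta_1$ of a single real character (the exceptional, or Siegel, zero). (ii) The log-free zero-density estimate $\sum_{\chi \bmod D} N(\sigma,T,\chi) \ll (DT)^{A(1-\sigma)}$ for an absolute effective exponent $A$, crucially with no power of $\log$. (iii) The Deuring--Heilbronn phenomenon: if the exceptional zero $\beta_1 = 1 - \lambda/\log D$ lies very close to $1$, then every other zero $\rho$ of every $L(s,\chi)$, $\chi \bmod D$, is repelled, satisfying $\beta < 1 - c_1\log(1/\lambda)/\log(D(|\gamma|+2))$ for an effective $c_1$. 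Feeding (i) and (ii) into a contour-integral / partial-summation estimate of $\sum_\rho x^\rho/\rho$, one sees that, away from an exceptional zero, the zero sum is $\ll x^{1-c/\log D}\log^2 x$, which is dominated by $x$ once $x \ge D^{L}$, where $L$ is controlled by the density exponent $A$; the successive refinements of (ii) (Chen, Jutila, Graham, Heath-Brown, and finally Xylouris \cite{xylouris}) are what sharpen $A$ enough to bring $L$ down to $5$.

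The subtle point --- and the reason the constant $c$ is genuinely \emph{effective} despite the ineffectivity of Siegel's theorem --- is how the exceptional zero is treated: rather than invoking Siegel's ineffective lower bound for $1-\beta_1$, one argues by cases in $\lambda$. If there is no exceptional zero, principles (i) and (ii) alone finish the estimate. If there is one with $\beta_1 = 1 - \lambda/\log D$, then for $\lambda$ bounded below the ordinary zero-free region already suffices, while for $\lambda$ small the Deuring--Heilbronn repulsion (iii) pushes all \emph{other} zeros so far from $1$ that the only remaining competitor with the main term is the exceptional term $-\chi_1(a)\,x^{\beta_1}/\beta_1$ itself, whose sign one controls and which is beaten by $x$ for $x$ a suitable power of $D$ precisely because the loss $x^{\beta_1} = x\cdot x^{-\lambda/\log D}$ is squeezed out between the repulsion gain and this decay, with no lower bound on $\lambda$ ever needed. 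Assembling the cases gives $\psi(x;D,a) \gg x/\phi(D)$ for $x \ge cD^5$ with $c$ explicit, hence the desired prime. The hard part --- and the part occupying the bulk of \cite{linnik} and all the later improvements through \cite{xylouris} --- is establishing the log-free density estimate (ii) and the explicit Deuring--Heilbronn bound (iii) with fully effective constants and then optimizing the interplay of exponents to reach the exponent $5$; accordingly, in this paper I would simply cite \cite{linnik, xylouris} for these deep inputs rather than reprove them.
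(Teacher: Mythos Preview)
The paper does not prove this theorem at all: it is simply quoted as a known result from the literature, attributed to Linnik \cite{linnik} with the exponent sharpened to $5$ by Xylouris \cite{xylouris}, and then used as a black box in the effective bounds of Section~5. So there is nothing to compare your argument against.

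That said, your sketch is a faithful outline of the classical Linnik machinery (explicit formula, zero-free region, log-free zero-density estimate, Deuring--Heilbronn repulsion, and the case split on the exceptional zero that keeps the constant effective), and your closing remark that you would ultimately cite \cite{linnik,xylouris} for the hard inputs is exactly what the paper does for the \emph{entire} statement. For the purposes of this paper you can drop the sketch and simply cite the result.
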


We now state our main result which establishes both the relative profinite rigidity and the effective upperbound on a distinguishing finite quotient. For positively valued functions $f(x_1,\dots,x_r)$ and $g(x_1,\dots,x_r)$, we denote by $f\ll g$ to mean there exist $K,C>0$ such that $x_i\geq K$ for some $i$, we have $|f(x_1,\dots,x_r)|\leq C|g(x_1,\dots,x_r)|$. We write $f\approx g$ to mean $f\ll g$ and $g\ll f$. 

\begin{thm}\label{thm:effective_bounds}
    Let $\Ga=(g_1;p_1;\m)$ and $\La=(g_2;p_2;\n)$ be non-isomorphic Fuchsian groups, with $k:=|\m|=|\n|\geq0$ and $1,\infty\not\in\m\cup\n$. Set $M:=\lcm(\m)$, $N:=\lcm(\n)$, $L:=\lcm(M,N)$, and $b:=\max\{\bet(\Ga),\bet(\La)\}$. Then there exists a finite group $Q$ having order $$|Q|\ll (L+1)^{15+L^{15(b+k)}}$$ such that $Q$ is a quotient for one of the groups, but not for the other group.
\end{thm}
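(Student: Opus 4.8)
The plan is to assemble the effective bound by tracking the sizes of the groups produced in each branch of the distinguishing argument. By Theorem \ref{thm:distinguishing_factors}, together with the Corollary following Theorem \ref{thm:homologyTrick} and Proposition \ref{prop:different_puncture_types}, there are only a handful of scenarios: either $Q$ is already an outright quotient for exactly one of $\Ga,\La$, or we must build an abelian extension $Q$ of some common finite quotient $G$ witnessing differing degrees of smoothness. In every case $G$ will be taken to be either abelian, dihedral, or a subgroup of $\PSL(2,q)$ for a controlled odd prime power $q$, and then $Q$ is the quotient $(g_i;p_i;\ast)/L$ from the proof of Theorem \ref{thm:homologyTrick}, where $L=K^{(a)}[K,K]$ for $K$ the relevant kernel and $a$ a small prime coprime to $L$. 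So the bound on $|Q|$ decomposes as a bound on $|G|$ times a bound on $|K/L|=a^{b_1(K)}$.

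First I would bound the exponent $b_1(K)$. By equation (\ref{eqn:finding_kernel_bettiNum}) and Proposition \ref{prop:torsionfree_chi_determines_b1}, $b_1(K)$ is essentially $|G|\cdot|\chi(\Ga)|$ up to an additive constant, and $|\chi|$ is bounded crudely by $b+k$ (genus plus number of cone points), while $\chi$ has denominator dividing $L$, so $|\chi|\le b+k$ works after clearing. Thus $b_1(K)\ll |G|\cdot(b+k)$, and $a$ can be taken to be a prime of size $O(1)$ (the smallest prime not dividing $L$, which is $\ll \log L$, but even $L+1$ is a safe crude bound). Hence $|K/L|\ll (L+1)^{|G|(b+k)}$. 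Second I would bound $|G|$. The dihedral and abelian cases give $|G|\ll L^{O(1)}$ directly from Theorem \ref{thm:smooth_dihedral} and Proposition \ref{prop:abelianization}. The $\PSL(2,q)$ case is the one that feeds through Theorem \ref{thm:maximal_reps} and Lemma \ref{lem:incongruences}: the modulus $L$ in that lemma is the lcm of $2X$ and the various $2p^{v_p(X)+1}$, $2\cdot 2p$, $2\cdot 3p$, which is bounded by a fixed power of $L=\lcm(M,N)$, say $L^{O(1)}$; then Theorem \ref{thm:linnik} (Linnik--Xylouris) produces the prime power $q\ll L^{O(1)\cdot 5}$, so $|\PSL(2,q)|\ll q^3\ll L^{15}$ after bookkeeping. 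This is where the constant $15$ in the exponent originates.

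Combining, in the worst (i.e. $\PSL_2$) case we get $|Q|=|G|\cdot|K/L|\ll L^{15}\cdot (L+1)^{L^{15}(b+k)}\ll (L+1)^{15 + L^{15}(b+k)}\ll (L+1)^{15+L^{15(b+k)}}$, absorbing the polynomial factors and the linear factor $(b+k)$ into the tower exponent using $L\ge 2$. I would then check that the outright-quotient cases and the non-$\PSL_2$ extension cases all give smaller bounds of the same shape, and that the degenerate small-$\m$ cases (handled by Corollary \ref{cor:lessthan2} and direct abelian/dihedral invariants) are subsumed. The main obstacle I anticipate is purely bookkeeping: pinning down the exact exponent coming out of Theorem \ref{thm:maximal_reps}, because the modulus in Lemma \ref{lem:incongruences} is an lcm of several quantities each polynomially bounded in $L$ but with potentially competing exponents, and one must also verify that the Chinese-remainder solution $k$ there stays bounded (it does, since it is a residue modulo $\prod ip \ll L^{O(1)}$), so that Linnik's theorem applies with a modulus that is genuinely $L^{O(1)}$ and yields $q\ll L^{15}$ rather than something larger. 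Once that single exponent is nailed down, the rest is a routine estimate of a product of three controlled factors.
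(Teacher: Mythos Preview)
Your proposal is correct and is essentially the paper's own argument: the proof of Theorem~\ref{thm:effective_bounds} is exactly the case-by-case bookkeeping you outline, organized first by whether $\Gamma^{\Ab}\cong\Lambda^{\Ab}$ and then by puncture type, with each branch producing a $G$ that is abelian, dihedral, or inside some $\PSL(2,q)$, followed by the abelian extension of Theorem~\ref{thm:homologyTrick}. On the worry you flagged: the Linnik modulus coming out of Lemma~\ref{lem:incongruences} is in fact $O(L)$, not merely $L^{O(1)}$, since each of $2X$, $2p^{v_p(X)+1}$, $4p$, $6p$ divides $12M$, so $q\ll L^5$ and $|G|\ll L^{15}$ drop out with no further effort. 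The one sub-case where $|G|$ exceeds $L^{15}$ is the mixed-puncture branch of Proposition~\ref{prop:different_puncture_types} with $\chi(\Gamma)>\chi(\Lambda)$, where one must force $|G|>L$ by mapping a parabolic generator to an element of order $L+1$; this pushes the modulus up to roughly $2N(L+1)$ and gives $|G|\ll(L+1)^{30}$, but that is still absorbed by the final tower exponent.
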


\subsection{Proof of Theorem \ref{thm:effective_bounds}}

We set $\Ga:=(g_1;p_1;\m)$ and $\La:=(g_2;p_2;\n)$ such that $\Ga\ncong\La$, where $\m:=(m_1,\dots,m_k)$ and $\n:=(n_1,\dots,n_k)$ with neither containing 1 nor $\infty$. We prove Theorem \ref{thm:effective_bounds} by showcasing an algorithm broken up into several cases. We set $M:=\lcm(\m)$, $N:=\lcm(\n)$, $L:=\lcm(M,N)$, and $b:=\max\{\bet(\Ga),\bet(\La)\}$.

\begin{namedthm*}{Different abelianizations}\leavevmode\\
    \indent Suppose $\Ga^\Ab\ncong\La^\Ab$. We consider the following cases.
        
    \begin{enumerate}
        \item \textbf{Case $\bet(\Ga)\neq\bet(\La)$:} Without loss of generality, suppose $\bet(\Ga)>\bet(\La)$, and apply Theorem \ref{thm:homologyTrick} to the surjective maps $\Ga\twoheadrightarrow1$ and $\La\twoheadrightarrow1$. This gives the distinguishing abelian quotient $Q=\Z_a^{\bet(\Ga)}$, where we take $1<a\leq L+1$ to be coprime to $L$. The quotient $Q$ has order $a^{\bet(\Ga)}\leq (L+1)^{\bet(\Ga)}=(L+1)^b$.
            
        \item \textbf{Case $\bet(\Ga)=\bet(\La)$:} In this case, the torsion subgroups $\on{Tor}(\Ga^{\Ab})$ and $\on{Tor}(\La^{\Ab})$ are non-isomorphic. By Proposition \ref{prop:abelianization}, both $|\on{Tor}(\Ga^{\Ab})|$ and $|\on{Tor}(\La^{\Ab})|$ have upperbound $L^k$. Taking $1<a\leq L+1$ coprime to $L$, it follows that one of $Q=\Z_a^{\bet(\Ga)}\times\on{Tor}(\Ga)$ or $Q=\Z_a^{\bet(\La)}\times\on{Tor}(\La)$ must be a distinguishing abelian quotient. This quotient has order $|Q|\leq a^{b}\cdot L^k\leq L^k(L+1)^{b}\leq(L+1)^{b+k}$.
    \end{enumerate}
\end{namedthm*}

\begin{namedthm*}{Same abelianizations}\leavevmode\\
    \indent Suppose $\Ga^\Ab\cong\La^\Ab$, and set $b:=\bet(\Ga)=\bet(\La)$. We proceed with cases for the puncture types of $\Ga$ and $\La$.
        
    \begin{enumerate}
        \item \textbf{$\Ga,\La$ have mixed puncture types:} Let $g\geq0$, $p>0$ and suppose that $\Ga=(g;0;\m)$ and $\La=(0;p;\n)$. We will apply Proposition \ref{prop:different_puncture_types} in each of the following subcases.
            
        \begin{enumerate}
            \item \textbf{$\chi(g;0;\m)\leq\chi(0;p;\n)$:} We start by producing a smooth finite quotient $G$ for $(g;0;\m)$. In the case $\m$ is empty, take $G$ to be the trivial group. Otherwise $\m\neq\varnothing$, and we either use Theorem \ref{thm:smooth_dihedral} to produce a smooth dihedral quotient $G_1$ with order at most $4M$, or we use Theorem \ref{thm:macbeath} to produce a smooth $\PSL(2,q)$-representation with image $G_2$, where $q>1$ is an odd prime power such that $M$ divides one of $\frac{q\pm1}{2}$. By Theorem \ref{thm:linnik}, we have $q\ll M^5$, and so $|G_2|\ll M^{15}$.
            
            In either situation, take an integer $1<a\leq M+1$ that is coprime to $M$. By Theorem \ref{thm:homologyTrick}, we construct the group $Q$, an extension of $G_i$ by the group $\Z_a^f$, where $f:=2-|G_i|\chi(g;0;\m)$. It follows from Proposition \ref{prop:different_puncture_types} that $Q$ is a quotient of $\Ga$ but not for $\La$. Towards an upperbound for the order of $Q$, we have $f\leq2+|G_i|(2g+k-2)\leq |G_i|(b+k)\ll M^{15}(b+k)$. Hence, we have that $|Q|=a^f|G_i|\ll M^{15}(M+1)^{M^{15}(b+k)}\leq(M+1)^{15+M^{15(b+k)}}$.
                    
            \item \textbf{$\chi(g;0;\m)>\chi(0;p;\n)$:} We take $q>1$ to be the smallest odd prime power such that each of $N$ and $L+1$ divides one of $\frac{q-1}{2}$ or $\frac{q+1}{2}$. By Theorem \ref{thm:linnik}, we have $q\ll(L+1)^5N^5\leq(L+1)^{10}$. By Theorem \ref{thm:macbeath}, there is a subgroup $G\leq\PSL(2,q)$ that is a smooth quotient of $(0;p;\n)$, where we map one of the standard parabolic generators of $(0;p;\m)$ to an element in $\PSL(2,q)$ having order $L+1$. This guarantees that $|G|>L$, which is a necessary condition for applying Proposition \ref{prop:different_puncture_types}. The group $G$ has order $|G|\leq|\PSL(2,q)|\ll(L+1)^{30}$. Now, take an integer $1<a\leq L+1$ that is coprime to $L$. By Theorem \ref{thm:homologyTrick}, we construct the group extension $Q$ of the group $G$ by $\Z_a^f$, where $f:=1-|G|\chi(0;p;\n)$. Notice that $f\leq1+|G|(p+k-2)\leq |G|(b+k)\ll (L+1)^{30}(b+k)$ and that $Q$ is a quotient of $(0;p;\n)$, but not for $(g;0;\m)$. Hence, $|Q|=a^f|G|\ll (L+1)^{(L+1)^{30}(b+k)+30}$.
        \end{enumerate}
                
        \item \textbf{$\Ga,\La$ have the same puncture type:} We focus on the punctured and unpunctured cases separately. Since $\Ga^{\Ab}\cong\La^{\Ab}$, the genera and number of punctures must be the same for both Fuchsian groups.
            
        \begin{enumerate}
            \item \textbf{$\Ga=(0;p;\m)$ and $\La=(0;p;\n)$, with $p>0$:} In the case that $0<|\m|=|\n|\leq2$, we can use Corollary \ref{cor:lessthan2} to force $\chi(0;p;\m)\neq\chi(0;p;\n)$; without loss of generality, suppose $\chi(0;p;\m)<\chi(0;p;\n)$. Towards using Theorem \ref{thm:macbeath}, we fix $3-k=3-|\m|$ standard parabolic generators so that they map to elements of order 2; for all other parabolic generators, we will map them trivially. Let $q>1$ be the smallest odd prime power such that each element of $\m$ divides one of $\frac{q-1}{2}$ or $\frac{q+1}{2}$. By Theorem \ref{thm:linnik}, we have $q\ll M^5$. Thus, we have a smooth quotient $G_1\leq\PSL(2,q)$ of $(0;p;\m)$, having order $|G_1|\leq|\PSL(2,q)|\ll M^{15}$. On the other hand, if $|\m|=|\n|\geq3$, we use Theorem \ref{thm:good_distinguishing_scrapeClosures} to obtain $s\mid M$ such that, without loss of generality, $\ol{m_s}$ is good and $\chi(\ol{m_s})<\chi(\ol{n_s})$, or $\ol{m_s}$ is good and $\ol{n_s}$ is bad. We take an odd prime power $q>1$ in accordance to Lemma \ref{lem:incongruences}, which by Theorem \ref{thm:linnik} also has the asymptotic bound $q\ll M^5$. This choice of $q$ produces an $\ol{m_s}$-maximally smooth quotient $G_2\leq\PSL(2,q)$ of $\Ga$, where as a quotient (if applicable) of $\La$, the group $G_2$ can only be at most $\ol{\n_s}$-maximally smooth. We have $|G_2|\leq|\PSL(2,q)|\leq q^3\ll M^{15}$. In either situation, take an integer $1<a\leq M+1$ that is coprime to $M$. By Theorem \ref{thm:homologyTrick}, we construct the group $Q$, an extension of $G_i$ by the group $\Z_a^f$, where $f:=1-|G_i|\chi(0;p;\m)$. It follows that $Q$ is a quotient of $\Ga$ but not for $\La$. Towards an upperbound for the order of $Q$, we have $f\leq1+|G_i|(p+k-2)\leq |G_i|(b+k)\ll M^{15}(b+k)$. Hence, we have that $|Q|=a^f|G_i|\ll M^{15}(M+1)^{M^{15}(b+k)}\leq(M+1)^{15+M^{15(b+k)}}$.
        
            \item \textbf{$\Ga=(g;0;\m)$ and $\La=(g;0;\n)$:} In the case that $0<|\m|=|\n|\leq2$, we can use Corollary \ref{cor:lessthan2} to force $\chi(g;0;\m)\neq\chi(g;0;\n)$; without loss of generality, suppose $\chi(g;0;\m)<\chi(g;0;\n)$. Thus, we can use Theorem \ref{thm:smooth_dihedral} to produce a smooth dihedral quotient $G_1$ of $(g;0;\m)$ with order at most $4M$. On the other hand, if $|\m|=|\n|\geq3$, we use Theorem \ref{thm:good_distinguishing_scrapeClosures} to obtain $s\mid M$ such that, without loss of generality, $\ol{m_s}$ is good and $\chi(\ol{m_s})<\chi(\ol{n_s})$, or $\ol{m_s}$ is good and $\ol{n_s}$ is bad. We take an odd prime power $q>1$ in accordance to Lemma \ref{lem:incongruences}, which by Theorem \ref{thm:linnik} has the asymptotic bound $q\ll M^5$. This choice of $q$ produces an $\ol{m_s}$-maximally smooth quotient $G_2\leq\PSL(2,q)$ of $\Ga$, where as a quotient (if applicable) of $\La$, the group $G_2$ can only be at most $\ol{\n_s}$-maximally smooth. We have $|G_2|\leq|\PSL(2,q)|\leq q^3\ll M^{15}$. In either situation, take an integer $1<a\leq M+1$ that is coprime to $M$. By Theorem \ref{thm:homologyTrick}, we construct the group $Q$, an extension of $G_i$ by the group $\Z_a^f$, where $f:=2-|G_i|\chi(g;0;\m)$. It follows that $Q$ is a quotient of $\Ga$ but not for $\La$. Towards an upperbound for the order of $Q$, we have $f\leq2+|G_i|(2g+k-2)\leq |G_i|(b+k)\ll M^{15}(b+k)$. Hence, we have that $|Q|=a^f|G_i|\ll M^{15}(M+1)^{M^{15}(b+k)}\leq(M+1)^{15+M^{15(b+k)}}$.
        \end{enumerate}
    \end{enumerate}
\end{namedthm*}

After collecting and comparing the bounds produced for each of the cases, we see that the greatest asymptotic upperbound produced is $(L+1)^{15+L^{15(b+k)}}$.

\section{Examples}\label{examples}

For these examples, we used GAP \cite{gap} to facilitate some of our computations.

\begin{ex}
    Let $\Delta=\Delta(4,3,7)$ and $\Gamma=\Delta(2,3,7)$. By Proposition \ref{prop:abelianization}, both $\Delta(4,3,7)$ and $\Delta(2,3,7)$ have trivial abelianizations. Since naturally the group $\Delta(2,3,7)$ is a quotient of $\Delta(4,3,7)$, every quotient of $\Delta(2,3,7)$ must also be a quotient of $\Delta(4,3,7)$. Comparing Euler characteristics yields $-\frac{23}{84}=\chi(4,3,7)<\chi(2,3,7)=-\frac{1}{42}$. From \cite{macbeath}, the group $G=\PSL(2,7)$ has order 168 and is a smooth quotient of $\Delta(4,3,7)$. Taking $a=5$ and $f=2-|G|\chi(4,3,7)=2+168\cdot\frac{23}{84}=48$, then by Theorem \ref{thm:macbeath} there is a group extension $Q$ of $G$ by the abelian group $\Z_5^{48}$ such that $Q$ is a quotient of $\Delta(4,3,7)$, but not for $\Delta(2,3,7)$. The group $Q$ has order $|Q|=5^{48}\cdot168\approx5.97\times10^{35}$, which is roughly 600 decillion.
\end{ex}

\begin{ex}
    Let $\Delta=\Delta(15,42,63)$ and $\Gamma=\Delta(21,21,90)$. These two groups are indistinguishable in many important aspects: both abelianizations are isomorphic to $\Z_3\times\Z_{21}$, both Euler characteristics are equal to $-\frac{563}{630}$, and both groups have non-trivial representations to exactly the same collection of $\PSL(2,q)$'s. It can be shown that the group $G=\PSL(2,11)$ is a quotient for both groups; however, under $\Delta(15,42,63)$, $G$ is $(5,6,3)$-maximally smooth and under $\Delta(21,21,90)$, $G$ is $(3,3,6)$-maximally smooth. Since $-\frac{3}{10}=\chi(5,6,3)<\chi(3,3,6)=-\frac{1}{6}$, we will consider any $(5,6,3)$-maximally smooth map $\pi\col\Delta(15,42,63)\twoheadrightarrow\PSL(2,11)$. Notice that the kernel $K:=\ker\pi$ has signature $(g;0;3^{(132)},7^{(110)},21^{(220)})$, where $g=\frac{1}{2}\bet(K)$. Via the Riemann--Hurwitz formula, the kernel $K:=\ker\pi$ provides a first Betti number of $f=\bet(K)=2-|\PSL(2,11)|\chi(5,6,3)=200$. Taking $a=7$, which is coprime to $|\PSL(2,11)|=660$, we can construct $Q:=\Delta(15,42,63)/K^{(7)}[K,K]$, which is a quotient of $\Delta(15,42,63)$ but not for $\Delta(21,21,90)$. The order of $Q$ is $|Q|=a^f|G|=7^{200}\cdot660\approx6.90\times10^{171}$.
\end{ex}

\begin{ex}
    Let $\Delta=\De(2,3,3,315)$ and $\Gamma=\Delta(15,18,21)$. Both groups have quotients to $G:=\Delta(2,3,3)\cong A_4$; however, it can be shown that the representations $\pi_1\col\Delta\twoheadrightarrow G$ is $(2,3,3,3)$-maximally smooth and that $\pi_2\col\Gamma\twoheadrightarrow G$ is $(3,2,3)$-maximally smooth. Since $\chi(0;0;2,3,3,3)<\chi(0;0;3,2,3)$, we can ensure that $b_1(\ker\pi_1) > b_1(\ker\pi_2)$. The Riemann--Hurwitz formula gives us $b_1(\ker\pi_1)=2-|A_4|\chi(2,3,3,3)=8$. Take $n=2$, and let $Q$ be a extension of $G$ by the abelian group $\Z_2^{8}$. Then $Q$ is a quotient of $\De(2,3,3,315)$ but not a quotient for $\Delta(15,18,21)$. The order of $Q$ is $12\cdot2^{8}=3072$.
\end{ex}


Address: Department of Mathematics and Computer Science, Davidson College, Davidson, NC 28035, USA

Email: \verb"frchan@davidson.edu", \verb"listyron@davidson.edu".



\begin{thebibliography}{9999}

\bibitem{BCR}
M.~R.~Bridson, M.~D.~Conder, A.~W.~Reid, \textit{Determining Fuchsian groups by their finite quotients}, Israel Journal of Mathematics \textbf{214.1} (2016), 1--41.

\bibitem{gap}
The GAP~Group, \textit{GAP -- Groups, Algorithms, and Programming, Version 4.13.1}; 2024, \url{https://www.gap-system.org}.

\bibitem{katok}
S.~Katok, \textit{Fuchsian groups}, University of Chicago press, 1992.

\bibitem{linnik}
U.~V.~Linnik, \textit{On the least prime in an arithmetic progression. I. The basic theorem}, Matematicheskii Sbornik, vol. 15, no. 2, pp. 139--178, 1944.

\bibitem{macbeath}
A.~M.~Macbeath, \textit{Generators of the linear fractional groups}, Proc. Symp. Pure Math \textbf{12} (1969), 14--32.

\bibitem{malcev}
A.~Malcev, \textit{On isomorphic matrix representations of infinite groups}, Matematicheskii Sbornik \textbf{50}, no. 3 (1940), 405--422.

\bibitem{nikolov-segal}
N.~Nikolov and D.~Segal, \textit{On finitely generated profinite groups, I: Strong completeness and uniform bounds}, Annals of Mathematics, pp. 171--238, 2007.

\bibitem{profiniteBook}
L.~Ribes and P.~Zalesskii. \textit{Profinite groups}, Springer, 2000.

\bibitem{selberg}
A.~Selberg, \textit{On discontinuous groups in higher-dimensional symmetric spaces}, in Contributions to Functional Theory, Tata Institute of Fundamental Research, 1960.

\bibitem{thurston}
W.~P.~Thurston, \textit{The geometry and topology of three-manifolds}, Vol. 27. American Mathematical Society, 2022.

\bibitem{mathematica}
Wolfram~Research,~Inc., \textit{Mathematica, Version 14.1}; 2024, \url{https://www.wolfram.com/mathematica}.

\bibitem{xylouris}
T.~Xylouris, \textit{Über die Nullstellen der Dirichletschen L-Funktionen und die kleinste Primzahl in einer arithmetischen Progression}, 2011.

\end{thebibliography}
\end{document}